\theoremstyle{plain}
\newtheorem{theorem}{Theorem}
\newtheorem{proposition}[theorem]{Proposition}                 
\newtheorem{lemma}[theorem]{Lemma}
\theoremstyle{definition}
\newtheorem{definition}[theorem]{Definition}                                       
\theoremstyle{remark}
\newtheorem{remark}[theorem]{Remark}
\DeclareMathAlphabet{\mathpzc}{OT1}{pzc}{m}{it}
\def\cp{[AS18]}
\def\co{\mathbbm{L}}
\newcommand{\lebesgue}{\boldsymbol{\lambda}}
\def\C{\mathcal{C}}
\def\CR{\mathpzc{CR}}
\def\l{L}
\def\E{\mathbb{E}}
\def\H{\mathbbm{H}}
\def\bdg{\operatorname{c}}
\def\K{\mathcal{K}}
\def\N{\mathbb{N}}
\def\p{\mathscr{P}}
\def\R{\mathbb{R}}
\def\A{\mathcal{A}}
\def\HH{\mathcal{H}}
\def\rd{\mathbbm{R}}
\def\V{\mathbb{V}}
\def\Ma{\mathbbm{M}}
\def\M{\mathds{M}}
\def\NN{\mathcal{N}}
\def\CC{\mathcal{C}}
\def\I{\mathbf{I}}\def\II{\mathbf{II}}\def\KL{\operatorname{KL}}
\def\N{\mathbb{N}}
\def\P{\mathbb{P}}
\def\supp{\operatorname{supp}}
\def\X{\mathbf{X}}
\def\tri{\widetriangle}
\renewcommand{\d}{\mathrm{d}}
\def\L{{\textrm L}}
\def\CR{\operatorname{CR}}
\def\Lip{\operatorname{Lip}}
\newcommand{\q}{^{\frac{p}{2}}}
\renewcommand{\p}{^{\frac{1}{p}}}
\newcommand{\e}{\mathrm{e}}
\newcommand{\FF}{\mathcal{F}}
\newcommand{\LL}{\mathcal{L}}
\renewcommand{\c}{^{\operatorname{c}}}
\newcommand{\VV}{\mathcal{V}}
\renewcommand{\tt}{^{t}}
\renewcommand{\hat}{\widehat}
\renewcommand{\tilde}{\widetilde}%
\newcommand{\T}{\mathcal{T}}
\begin{document}
\title{$\operatorname{Sup}$-norm adaptive simultaneous drift estimation for ergodic diffusions
}

\author{Cathrine Aeckerle-Willems and Claudia Strauch\thanks{Universit\"at Mannheim, Institut f\"ur Mathematik, 68131 Mannheim, Germany.\newline
E-mail: aeckerle@uni-mannheim.de/strauch@uni-mannheim.de}}
\date{\vspace*{-1.5em}}

\maketitle

\begin{abstract}
We consider the question of estimating the drift and the invariant density for a large class of scalar ergodic diffusion processes, based on continuous observations, in $\sup$-norm loss. 
The unknown drift $b$ is supposed to belong to a nonparametric class of smooth functions of unknown order. 
We suggest an adaptive approach which allows to construct drift estimators attaining minimax optimal $\sup$-norm rates of convergence.
In addition, we prove a Donsker theorem for the classical kernel estimator of the invariant density and establish its semiparametric efficiency.
Finally, we combine both results and propose a fully data-driven bandwidth selection procedure which simultaneously yields both a rate-optimal drift estimator and an asymptotically efficient estimator of the invariant density of the diffusion. 
Crucial tool for our investigation are uniform exponential inequalities for empirical processes of diffusions.
\end{abstract}

\section{Introduction}
The field of nonparametric statistics for stochastic processes has become an integral part of statistics.
Due to their practical relevance as standard models in many areas of applied science such as genetics, meteorology or financial mathematics to name very few, the statistical analysis of diffusion processes receives special attention.
The first contribution of the present paper is an investigation of adaptive $\sup$-norm convergence rates for a nonparametric Nadaraya--Watson-type drift estimator, based on a continuous record of observations $(X_s)_{0\leq s\leq t}$ of a diffusion process on the real line. 
The suggested data-driven bandwidth choice relies on Lepski's method for adaptive estimation. 
Characterising upper and lower bounds, we show that the proposed estimation procedure in the asymptotic regime $t\to \infty$ is minimax rate-optimal over nonparametric H\"older classes. 
Remarkably, we impose only very mild conditions on the drift coefficient, not going far beyond standard assumptions that ensure the existence of ergodic solutions of the underlying SDE over the real line. 
In particular, we allow for unbounded drift coefficients.
Secondly, we prove a Donsker-type theorem for the classical kernel estimator of the invariant density in $\ell^\infty(\R)$ and establish its semiparametric efficiency. 
With regard to the direct relation between drift coefficient and the invariant density, it is clear that the corresponding estimation problems are closely connected. 
In a last step, we combine both tasks and suggest an adaptive bandwidth choice that simultaneously yields both an asymptotically efficient, asymptotically normal (in $\ell^\infty(\R)$) estimator of the invariant density and, at the same time, the corresponding minimax rate-optimal drift estimator. 

So far, results analysing the $\sup$-norm risk in the context of diffusion processes are rather scarce, even though quantifying expected maximal errors is of immense relevance, in particular for practical applications. 
We therefore start in the basic set-up of continuous observations of a scalar ergodic diffusion process. 
While the idealised framework of continuous observations of the process may be considered as being far from the reality, 
it is indisputably of substantial theoretical interest because the statistical results incorporate the very nature of the diffusion process, not being influenced by any discretisation errors.
Consequently, they serve as relevant benchmarks for further investigations. 
Moreover, our approach is attractive in the sense that it provides a reasonable starting point for extending the statistical analysis to discrete observation schemes and even multivariate diffusion processes. 
A second, very concrete motivation for our framework is the idea of bringing together methods from stochastic control and nonparametric statistics. 
Diffusion processes serve as a prototype model in stochastic optimal control problems which are solved under the long-standing assumption of continuous observations of a process driven by known dynamics. 
Relaxing this assumption to the framework of continuous observations of a process driven by an \emph{unknown} drift coefficient, imposing merely mild regularity assumptions, raises interesting questions on how to learn the dynamics by means of nonparametric estimation procedures and to control in an optimal way at the same time. With respect to the statistical methods, these applications typically require optimal bounds on $\sup$-norm errors. 
The present paper provides these tools for a large class of scalar diffusion processes. 

Taking a look at the evolution of the area of statistical estimation for diffusions up to the mid 2000's, we refer to \cite{goetal04} for a very nice summary. 
The monograph \cite{kut04} provides a comprehensive overview on  inference for one-dimensional ergodic diffusion processes on the basis of continuous observations considering pointwise and $L^2$-risk measures. \cite{ban78} is commonly mentioned as the first article addressing the question of nonparametric identification of diffusion processes from continuous data. 
In nonparametric models, asymptotically efficient estimators typically involve the optimal choice of a tuning parameter that depends on the smoothness of the nonparametric class of targets. 
From a practical perspective, this is not satisfying at all because the smoothness is usually not known. 
One thus aims at \emph{adaptive} estimation procedures which are based on purely data-driven estimators adapting to the unknown smoothness.

\cite{spok00} and \cite{dal05} were the first to study adaptive drift estimation in the diffusion model based on continuous observations. 
\cite{spok00} considers pointwise estimation whereas \cite{dal05} investigates a weighted $L^2(\R)$-norm. 
\cite{hoff99} initiated adaptive estimation in a high-frequency setting, proposing a data driven estimator of the diffusion coefficient based on wavelet thresholding which is rate optimal wrt $L^\gamma(D)$-loss, for $\gamma\in [1,\infty)$ and a compact set $D$. 
With regard to low-frequency data, we refer to the seminal paper by \cite{goetal04}. 
Their objective is inference on the drift and diffusion coefficient of diffusion processes with boundary reflections. 
The quality of the proposed estimators is measured in the $L^2([a,b])$ distance for any $0<a<b<1$. 
Like restricting to estimation on arbitrary but fixed compact sets, looking at processes with boundary reflections constitutes a possibility to circumvent highly technical issues that we will face in our investigation of diffusions on the entire real line. 
\citeauthor{goetal04} postulate that allowing diffusions on the real line would require to introduce a weighting in the risk measure given by the invariant density. 
This phenomenon will become visible in our results, as well. 
The same weighting function can be found in \cite{dal05}. 
Intuitively, it seems natural that the estimation risk would explode without a weighting since the observations of the continuous process during a finite period of time do not contain information about the behaviour outside the compact set where the paths lives in. 
For a more detailed heuristic account on the choice of the weight function for $L^2(\R)$-risk, we refer to Remark 4.1 in \cite{dal05}.
Sharp adaptive estimation of the drift vector for multidimensional diffusion processes from continuous observations for the $L^2$- and the pointwise risk has been addressed in \cite{str15} and \cite{str16}, respectively.

As illustrated, the pointwise and $L^2$-risks are already well-understood in different frameworks.
The present paper complements these developments by an investigation of the $\sup$-norm risk in the continuous observation scheme. 
In the low-frequency framework, this strong norm was studied in \cite{sotra16} who construct both an adaptive estimator of the drift and adaptive confidence bands. 
They prove a functional central limit theorem for wavelet estimators in a multi-scale space, i.e., considering a weaker norm that still allows to construct adaptive confidence bands for the invariant density and the drift with optimal $\ell^\infty([a,b])$-diameter. Still, there exist a lot of challenging open questions, and in view of the growing field of applications, there is a clear need for developing and adding techniques and tools for the statistical analysis of stochastic processes under $\sup$-norm risk. 
Ideally, these tools should include the probabilistic features of the processes and, at the same time, allow for an in-depth analysis of issues such as adaptive estimation in a possibly broad class of models. 

A common device for the derivation of adaptive estimation procedures in $\sup$-norm loss are uniform Talagrand-type concentration inequalities and moment bounds for empirical processes based on chaining methods. 
These tools are made available for a broad class of scalar ergodic diffusion processes in our recent paper \cite{cacs18}, in the sequel abbreviated as \cp. 
The concentration inequalities derived therein will serve as the central vehicle for our analysis, and we conjecture that they allow for generalizations on discrete observation schemes, multivariate state variables and even more general Markov processes. 
Therefore, the approach presented in this paper provides guidance for further statistical investigations of stochastic processes in $\sup$-norm risk.

Besides the frequentist statistical research, the Bayesian approach found a lot of interest, more recently. 
In the framework of continuous observations, \cite{meuletal06} consider the asymptotic behaviour of posterior distributions in a general Brownian semimartingale model which, as a special case, includes the ergodic diffusion model.
\cite{poketal13} investigate a Bayesian approach to nonparametric estimation of the periodic drift of a scalar diffusion from continuous observations and derive bounds on the rate at which the posterior contracts around the true drift in $L^2([0,1))$-norm. 
Improvements in terms of these convergence rates results and adaptivity are given in \cite{vwvz16}.
Nonparametric Bayes procedures for estimating the drift of one-dimensional ergodic diffusion models from discrete-time low-frequency data are studied in \cite{vdmvz13}.
The authors give conditions for posterior consistency and verify these conditions for concrete priors.
Given discrete observations of a scalar reflected diffusion, \cite{niso17} derive (and verify) conditions in the low-frequency sampling regime for prior distributions on the diffusion coefficient and the drift function that ensure minimax optimal contraction rates of the posterior distribution over H\"older--Sobolev smoothness classes in $L^2([a,b])$-distance, for any $0<a<b<1.$


%
%
%
%
%
%
%


\subsection*{Basic framework and outline of the paper}
Taking into view the $\sup$-norm risk, the aim of this paper is to suggest a rate-optimal nonparametric drift estimator, based on continuous observations of an ergodic diffusion process on the real line which is given as the solution of the SDE
\begin{equation}\label{SDE}
\d X_t\ =\ b(X_t)\d t+ \sigma(X_t)\d W_t, \quad X_0=\xi, \ t>0,
\end{equation}
with unknown drift function $b\colon\R\to\R$, dispersion $\sigma\colon\R\to (0,\infty)$ and some standard Brownian motion $W=(W_t)_{t\geq0}$.
The initial value $\xi$ is a random variable independent of $W$. 
We restrict to the ergodic case where the Markov process $(X_t)_{t\geq 0}$ admits an invariant measure, and we denote by $\rho_b$ and $\mu_b$ the invariant density and the associated invariant measure, respectively. 
Furthermore, we will always consider stationary solutions of \eqref{SDE}, i.e., we assume $\xi\sim\mu_b$.

In the set-up of continuous observations, there is no interest in estimating the volatility $\sigma^2$ since this quantity is identifiable using the quadratic variation of $X$. We thus focus on recovering the unknown drift.
We develop our results in the following classical scalar diffusion model.

\begin{definition}\label{def:B}
Let $\sigma\in \operatorname{Lip}_{\operatorname{loc}}(\R)$ and assume that, for some constants $\nu\geq 0$, $\C\geq 1$, $\sigma$ satisfies $ |\sigma(x)| \leq\C(1+|x|) $ and $ |\sigma^{-1}(x)| \leq\C(1+|x|^\nu) $ for all $x\in\R$ . For fixed constants $A,\gamma>0$ and $\C\geq 1$, define the set $\Sigma=\Sigma(\C,A,\gamma,\sigma)$ as
\begin{equation}\label{eq:sigma}
\Sigma\ :=\ \Big\{b \in \operatorname{Lip}_{\operatorname{loc}}(\R)\colon|b(x)| \leq\C(1+|x|),\ \forall|x|>A\colon \frac{b(x)}{\sigma^2(x)}\operatorname{sgn}(x)\leq -\gamma\Big\}.
\end{equation}
\end{definition}

Given any $b\in\Sigma$, there exists a unique strong solution of the SDE \eqref{SDE} with ergodic properties and invariant density
\begin{equation}\label{eq:invdens}
\rho(x)=\rho_b(x)\ :=\ \frac{1}{C_{b,\sigma}\sigma^2(x)}\ \exp\left(\int_0^x\frac{2b(y)}{\sigma^2(y)}\d y\right),\quad x\in\R,
\end{equation}
with $C_{b,\sigma}:=\int_\R\frac{1}{\sigma^2(u)}\ \exp\left(\int_0^u\frac{2b(y)}{\sigma^2(y)}\d y\right)\d u$ denoting the normalising constant. 
Throughout the sequel and for any $b\in\Sigma$, we will denote by $\E_b$ the expected value with respect to the law of $\X$ associated with the drift coefficient $b$.
The distribution function corresponding to $\rho=\rho_b$ and the invariant measure of the distribution will be denoted by $F=F_b$ and $\mu=\mu_b$, respectively. 

Our statistical analysis relies heavily on uniform concentration inequalities for continuous-time analogues of empirical processes of the form $t^{-1}\int_0^t f(X_s)\d s - \E_b[f(X_0)]$, $f\in\mathcal F$, as well as stochastic integrals $t^{-1}\int_0^t f(X_s)\d X_s - \E_b[b(X_0)f(X_0)]$, $f\in\mathcal F$, indexed by some infinite-dimensional function class $\mathcal F$. 
These key devices are provided in our work on concentration inequalities for scalar ergodic diffusions. 
They are tailor-made for the investigation of $\sup$-norm risk criteria and can be considered as continuous-time substitutes for Talagrand-type concentration inequalities and moment bounds for empirical processes in the classical i.i.d.~framework. 
In \cp, upper bounds on the expected $\sup$-norm error for a kernel density estimator of the invariant density (that we will use in the present work) are derived as a first statistical application of the developed concentration inequalities. 
In Section \ref{sec:pre}, we will present the announced probabilistic tools and statistical results from \cp~that will be of crucial importance in our subsequent developments.
The advantage of the methods proposed in \cp~is that the martingale approximation approach - which is at the heart of the derivations - yields very elementary simple proofs, working under minimal assumptions on the diffusion process.

\paragraph{The estimators}
Given continuous observations $X^{t}=(X_s)_{0\leq s\leq t}$ of a diffusion process as described in Definition \ref{def:B}, first basic statistical questions concern the estimation of the invariant density $\rho_b$ and the drift coefficient $b$ and the investigation of the respective convergence properties. 
Since $b=(\rho_b\sigma^2)'/(2\rho_b)$ (for differentiable $\sigma$), the question of drift estimation is obviously closely related to estimation of the invariant density $\rho_b$ and its derivative $\rho_b'$. 
For some smooth kernel function $K\colon \R\to\R^+$ with compact support, introduce the standard kernel invariant density estimator
\begin{equation}\label{est:dens}
\rho_{t,K}(h)(x)\ :=\ \frac{1}{th}\int_0^tK\left(\frac{x-X_u}{h}\right)\d u,\quad x\in\R.
\end{equation}
A natural estimator of the drift coefficient $b\in\Sigma(\C,A,\gamma,1)$, which relies on the analogy between the drift estimation problem and the model of regression with random design, is given by a Nadaraya--Watson-type estimator of the form  
\begin{align}\label{est:drift}
b_{t,K}(h)(x)&:=\ \frac{\overline\rho_{t,K}(h)(x)}{\rho_{t,K}(t^{-1/2})(x) + \sqrt{\frac{\log t}{t}}\exp\left(\sqrt{\log t}\right)},\\\label{est:derivative}
\text{where } 
\overline\rho_{t,K}(h)(x)&:=\ \frac{1}{th}\int_0^tK\left(\frac{x-X_s}{h}\right)\d X_s.
\end{align}
We recognize the kernel density estimator in the denominator, and we will see that $\overline\rho_{t,K}$ with the proposed (adaptive) bandwidth choice serves as a rate-optimal estimator of $b\rho_b$. The additive term in the denominator prevents it from becoming small too fast in the tails. 

Given a record of continuous observations of a scalar diffusion process $\X$ with coefficients as described in Definition \ref{def:B}, the local time estimator $\rho_0(\bullet):=t^{-1}L_t^\bullet (\X)$, for $(L_t^a(\X),t\geq 0, a\in\R)$ denoting the local time process of $\X$, is available. 
This is a natural density estimator since diffusion local time can be interpreted as the derivative of the empirical measure. 
In the past, the latter was exhaustively studied for pointwise estimation and in $L^2$-risk unlike the $\sup$-norm case. 
In \cite[Sec.~7]{kutoyants98}, weak convergence of the local time estimator to a Gaussian process in $\ell^\infty(\R)$ is shown.
The same is done for more general diffusion processes in \cite{vdvvz05}. 
Having provided the required tools from empirical process theory, upper bounds on all moments of the $\sup$-norm error of $\rho_0$ are proven in \cp.
Unfortunately, the local time estimator is viewed as not being very feasible in practical applications.
In addition, it does not offer straightforward extensions to the case of discretely observed or multivariate diffusions, in sharp contrast to the classical kernel-based density estimator. 
We therefore advocate the usage of the kernel density estimator introduced in \eqref{est:dens} which can be viewed as a universal approach in nonparametric statistics, performing an optimal behaviour over a wide range of models. 
Furthermore, the kernel density estimator naturally appears in the denominator of our Nadaraya--Watson-type drift estimator defined according to \eqref{est:drift}.

\paragraph{Asymptotically efficient density estimation}
In the present work, we will complement the sup-norm analysis started in \cp~with an investigation of the asymptotic distribution of the kernel density estimator in a functional sense. 
We will prove a Donsker-type theorem for the kernel density estimator, thereby demonstrating that this estimator for an appropriate choice of bandwidth behaves asymptotically like the local time estimator. 
We then go one step further and establish optimality of the limiting distribution, optimality seen in the sense of the general convolution theorem 3.11.2 for the estimation of Banach space valued parameters presented in \cite{vdvw96}. 
Their theorem states that, for an asymptotically normal sequence of experiments and any regular estimator, the limiting distribution is the convolution of a specific Gaussian process and a noise factor. 
This Gaussian process is viewed as the optimal limit law, and we refer to it as the \emph{semiparametric lower bound}. 
We establish this lower bound and verify that it is achieved by the kernel density estimator. 
The Donsker-type theorem and the verification of semiparametric efficiency of the kernel-based estimator are the main results on density estimation in the present paper. 
They are presented in Section \ref{sec:3}. 
Donsker-type theorems can be regarded as frequentist versions of functional Bernstein--von Mises theorems to some extent. 
In particular, our methods and techniques are interesting for both the frequentist and Bayesian community. 
The optimal limiting distribution in the sense of the convolution theorem is relevant in the context of Bayesian Bernstein--von Mises theorems in the following sense: 
If this lower bound is attained, Bayesian credible sets are optimal asymptotic frequentist confidence sets as argued in \cite{castillo14}; see also \cite[p.~12]{nicklsoehl17} who address Bernstein--von Mises theorems in the context of compound Poisson processes. 
Our approach concerning the question of efficiency is based on some recent work by Nickl and Ray on a Bernstein--von Mises theorem for multidimensional diffusions. 
We thank Richard Nickl for the private communication that motivated the derivation of the semiparametric lower bound in this work. 

\paragraph{Minimax optimal adaptive drift estimation in $\sup$-norm}
Subject of Section \ref{sec:adapt_einfach} is an adaptive scheme for the $\sup$-norm rate-optimal estimation of the drift coefficient.
This is the main contribution and initial motivation of the present paper.
Our approach for estimating the drift coefficient is based on Lepski's method for adaptive estimation and the exponential inequalities presented in Section \ref{sec:pre}. 
For proving upper bounds on the expected $\sup$-norm loss, we follow closely the ideas developed in \cite{gini09} for the estimation of the density and the distribution function in the classical i.i.d.~setting. 
We suggest a purely data-driven bandwidth choice $\tri h_T$ for the estimator $b_{T,K}$ defined in \eqref{est:drift} and derive upper bounds on the convergence rate of the expected $\sup$-norm risk uniformly over H\"older balls in Theorem \ref{theo:est}, imposing very mild conditions on the drift coefficient.
To establish minimax optimality of the rate, we prove lower bounds presented in Theorem \ref{thm:lowerboundableitung}. 

\paragraph{Simultaneous adaptive density and drift estimation}
Observing from \eqref{eq:invdens} that the invariant density is a transformation of the integrated drift coefficient, it is not surprising that we can carry over the aforementioned approach in \cite{gini09} (which aims at simultaneous estimation of the distribution function and density in the i.i.d.~framework) to the problems of invariant density and drift estimation. 
We suggest a simultaneous bandwidth selection procedure that allows to derive a result in the spirit of their Theorem 2. Adjusting the procedure from Section \ref{sec:adapt_einfach} for choosing the bandwidth $\tri h_t$ in a data-driven way, we can find a bandwidth $\hat h_t$ such that $\rho_{t,K}(\hat h_t)$ is an asymptotically efficient estimator in $\ell^\infty(\R)$ for the invariant density and, at the same time, $b_{t,K}(\hat h_t)$ estimates the drift coefficient with minimax optimal rate of convergence wrt $\sup$-norm risk. 
We formulate this result in Theorem \ref{theo:sim}.

\section{Preliminaries}\label{sec:pre}
We will investigate the question of adapting to unknown H\"older smoothness. 
For ease of presentation, we will suppose in the sequel that $\sigma\equiv1$. 
The subsequent results however can be extended to the case of a general diffusion coefficient fulfilling standard regularity and boundedness assumptions.
Recall the definition of the class $\Sigma=\Sigma(\CC,A,\gamma,\sigma)$ of drift functions in \eqref{eq:sigma}.


\begin{definition}\label{def:Sigma}
Given $\beta,\mathcal{L}>0$, denote by $\mathcal{H}_{\R}(\beta,\mathcal{L})$ the \emph{H\"older class} (on $\R$) as the set of all functions $f\colon\R\to\R$ which are $l:=\lfloor \beta \rfloor$-times differentiable and for which
\begin{align*}
\|f^{(k)}\|_{\infty}&\leq\ \mathcal{L}  \qquad\qquad\forall\, k=0,1,...,l,\\
\|f^{(l)}(\cdot+s)-f^{(l)}(\cdot)\|_{\infty}&\leq\ \mathcal{L}|s|^{\beta- l}  \qquad \forall\, s\in\R.
\end{align*}
Set 
\begin{equation}\label{def:Sigmabeta}
\Sigma(\beta,\mathcal{L})=\Sigma(\beta,\LL,\mathcal C,A,\gamma)\ :=\ \left\{b\in\Sigma(\CC,A,\gamma,1)\colon\ \rho_b\in\mathcal{H}_\R(\beta+1,\mathcal{L})\right\}.
\end{equation}
Here, $\lfloor \beta \rfloor$ denotes the greatest integer strictly smaller than $\beta$.
\end{definition}

Considering the class of drift coefficients $\Sigma(\beta,\mathcal{L})$, we use kernel functions satisfying the following assumptions,
\begin{equation}\label{kernel}
\begin{array}{r@{}l}
&{}\bullet\quad K:\R\rightarrow \R^+ \text{ is Lipschitz continuous and symmetric};\\ [3pt]
&{}\bullet\quad\supp(K)\subseteq [-1/2,1/2];\\[3pt]
&{}\bullet\quad \text{for some } \alpha\geq \beta +1, K\text{ is of order }\lfloor\alpha \rfloor.
\end{array}
\end{equation}
The subsequent deep results from \cp~are fundamental for the investigation of the $\sup$-norm risk. 
They rely on diffusion specific properties, in particular the existence of local time, on the one hand, and classical empirical process methods like the generic chaining device on the other hand. 
In the classical setting of statistical inference based on i.i.d.~observations $X_1,...,X_n$, the analysis of $\sup$-norm risks typically requires investigating empirical processes of the form 
$\left(n^{-1}\sum_{i=1}^n f(X_i)\right)_{f\in\mathcal F}$, indexed by a possibly infinite-dimensional class $\mathcal F$ of functions which, in many cases, are assumed to be uniformly bounded.
Analogously, in the current continuous, non-i.i.d.~setting, our analysis raises questions about empirical processes of the form 
\[
	\left(\frac{1}{t}\int_0^t f(X_s) \d s\right)_{f\in\mathcal F}\quad\text{ and, more generally, }\quad\left(\frac{1}{t}\int_0^t f(X_s) \d X_s\right)_{f\in\mathcal F}.
\]
Clearly, the finite variation part of the stochastic integral entails the need to look at unbounded function classes since we do not want to restrict to bounded drift coefficients.
Answers are given in \cp~where we provide exponential tail inequalities both for 
\[
\sup_{f\in \mathcal F}\ \left[\frac{1}{t}\int_0^t f(X_s) \d s\ -\ \E_b\left[f(X_0)\right]\right]\quad\text{and}\quad\sup_{f\in \mathcal F}\ \left[\frac{1}{t}\int_0^t f(X_s) \d X_s\ -\ \E_b\left[b(X_0)f(X_0)\right]\right] ,
\]
imposing merely standard entropy conditions on $\mathcal F$.
As can be seen from the construction of the estimators, we have to exploit these results in order to deal with both empirical diffusion processes induced by the kernel density estimator $\rho_{t,K}(h)$ (see \eqref{est:dens}) and with stochastic integrals like the estimator $\overline\rho_{t,K}(h)$ of the derivative of the invariant density (see \eqref{est:derivative}).
One first crucial auxiliary result for proving the convergence properties of the estimation schemes proposed in Sections \ref{sec:adapt_einfach} and \ref{sec:sim} is stated in the following

\begin{proposition}[Concentration of the estimator $\overline{\rho}_{t,K}(h)$ of $\rho_b'/2$]\label{prop:csi}
Given a continuous record of observations $X^t=(X_s)_{0\leq s\leq t}$ of a diffusion $\X$ with $b\in\Sigma=\Sigma(\C,\A,\gamma,1)$ as introduced in Definition \ref{def:B} and a kernel $K$ satisfying \eqref{kernel}, define the estimator $\overline\rho_{t,K}$ according to \eqref{est:derivative}.
Then, there exist constants $\co, \tilde\co_0$ such that, for any $u,p\geq 1$, $h\in (0,1)$, $t\geq 1$,
\begin{align}
\begin{split}\label{con_stoch_int}
\sup_{b\in \Sigma} \left(\E_b\left[\|\overline{\rho}_{t,K}(h)-\E_b\left[\overline{\rho}_{t,K}(h)\right]\|_\infty^p\right]\right)\p&\leq\ \phi_{t,h}(p),\\
\sup_{b\in \Sigma}\P_b\left(\sup_{x\in\R}\left|\overline\rho_{t,K}(h)(x)-\E_b\left[\overline \rho_{t,K}(h)(x)\right]\right|>\e\phi_{t,h}(u)\right)&\leq\ \e^{-u},
\end{split}
\end{align}
for 
\begin{align}
\begin{split}\label{phith}
\phi_{t,h}(u)&:=\ \co \Bigg\{ 
\frac{1}{\sqrt t}\Big\{\left(\log \left(\frac{ut}{h}\right)\right)^{3/2}+\left(\log\left(\frac{ut}{h}\right)\right)^{1/2}+u^{3/2}\Big\}+\frac{u}{th}+  \frac{1}{h} \exp\left(-\tilde\co_0 t \right)\\
&\qquad +\frac{1}{\sqrt{th}}\left(\log\left(\frac{ut}{h}\right)\right)^{1/2} + \frac{1}{t^{3/4}\sqrt h}\log\left(\frac{ut}{h}\right) + \frac{1}{\sqrt{th}} \left\{\sqrt u+ \frac{u}{t^{1/4}}\right\}\Bigg\}.
\end{split}
\end{align}
\end{proposition}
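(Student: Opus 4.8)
\emph{Strategy.} The plan is to rewrite the centred estimator as a stochastic-integral–type empirical process indexed by a one-parameter family of rescaled kernels, and then to invoke the master exponential and moment inequalities for such processes recalled in Section~\ref{sec:pre}. First I would observe that, since $\sigma\equiv1$, for every $x\in\R$
\[
\overline\rho_{t,K}(h)(x)\ =\ \frac1t\int_0^t f_{x,h}(X_s)\,\d X_s,\qquad f_{x,h}(y)\ :=\ \frac1h\,K\!\left(\frac{x-y}{h}\right),
\]
and, by stationarity and the martingale property of $\int_0^\cdot f_{x,h}(X_s)\,\d W_s$,
\[
\E_b\big[\overline\rho_{t,K}(h)(x)\big]\ =\ \E_b\big[b(X_0)f_{x,h}(X_0)\big].
\]
Consequently
\[
\sup_{x\in\R}\Big|\overline\rho_{t,K}(h)(x)-\E_b\big[\overline\rho_{t,K}(h)(x)\big]\Big|
\ =\ \sup_{f\in\mathcal{F}_{t,h}}\Big|\frac1t\int_0^t f(X_s)\,\d X_s-\E_b[b(X_0)f(X_0)]\Big|,
\]
where $\mathcal{F}_{t,h}:=\{f_{x,h}:x\in\R\}$ is precisely of the type for which \cp~provides uniform concentration, so that both assertions of the proposition will follow once the hypotheses on $\mathcal{F}_{t,h}$ are verified and the abstract bound is made explicit.

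\emph{Verifying the input conditions.} The second step is to check that $\mathcal{F}_{t,h}$ meets the entropy and integrability requirements of the stochastic-integral inequality of \cp, with all characteristics quantified in $h$ and uniform over $b\in\Sigma$. The class has constant envelope $\|K\|_\infty/h$; since $\sup_{b\in\Sigma}\|\rho_b\|_\infty<\infty$ (a uniform bound available from the preliminaries), one gets $\sup_{b\in\Sigma}\E_b[f_{x,h}(X_0)^2]\lesssim h^{-1}$, so that $h^{-1/2}$ plays the role of the $L^2(\mu_b)$-radius of $\mathcal{F}_{t,h}$. Moreover, being the $h$-rescaled location family generated by a single Lipschitz function supported in $[-1/2,1/2]$, $\mathcal{F}_{t,h}$ is of VC type with a VC index that does not depend on $h$ or $b$; the factors $\log(ut/h)$ appearing in \eqref{phith} then arise when the resulting uniform-entropy integral is evaluated at the resolution dictated by $t$, $h$ and $u$. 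Finally, the uniform moment and mixing estimates for $\X$ from Section~\ref{sec:pre} are what allow the drift part $\frac1t\int_0^t f(X_s)b(X_s)\,\d s-\E_b[fb]$ to be controlled despite $b$ being unbounded — this is exactly why the stochastic-integral inequality, rather than a bounded-class empirical-process inequality, is invoked here.

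\emph{Extracting $\phi_{t,h}$ and the moment bound.} Substituting envelope $\asymp1/h$, variance proxy $\asymp1/h$ and the above VC characteristics into the exponential tail inequality of \cp and simplifying yields the second display of \eqref{con_stoch_int}: the $t^{-1/2}\{(\log(ut/h))^{3/2}+(\log(ut/h))^{1/2}+u^{3/2}\}$ block comes from the generic-chaining/Bernstein control of the martingale part, the $u/(th)$ term from the sup-norm (envelope) contribution, the $h^{-1}\e^{-\tilde\co_0 t}$ term from the stationarity/martingale-approximation remainder, and the $(th)^{-1/2}$– and $t^{-3/4}h^{-1/2}$–terms from the dominating variance and the cross terms of the decomposition. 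The moment bound (first display of \eqref{con_stoch_int}) then follows either directly from the companion $L^p$–inequality of \cp applied to $\mathcal{F}_{t,h}$, or by integrating the tail bound: since $u\mapsto\phi_{t,h}(u)$ is, up to constants, a sum of the monomials $u^{1/2},u,u^{3/2}$ plus a constant, integrating $\P_b(Z>\e\phi_{t,h}(u))\le\e^{-u}$ against $\d(\phi_{t,h}(u)^p)$ is controlled by $\phi_{t,h}(p)^p$ up to an absolute constant, which after adjusting $\co$ gives the claim uniformly over $\Sigma$.

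\emph{Main obstacle.} The conceptual content is light, since everything is reduced to the abstract inequalities of \cp; the technical heart is the bookkeeping in the third step. One must track precisely how the envelope and the VC characteristics of $\mathcal{F}_{t,h}$ enter the abstract bounds so that the $h$-dependence of \emph{every} term of $\phi_{t,h}$ comes out exactly as stated — in particular that no spurious $\log(1/h)$ or additional negative powers of $h$ are produced — and one must confirm that the remainder carrying $\e^{-\tilde\co_0 t}$ is genuinely of order $h^{-1}\e^{-\tilde\co_0 t}$ with $\tilde\co_0$ depending only on the class $\Sigma$. The other point demanding care is the uniformity over $b\in\Sigma$ of all the inputs (the bound on $\|\rho_b\|_\infty$, the spectral gap, and the tail/moment estimates for $X_0$), but these are furnished by the preliminary results of Section~\ref{sec:pre}.
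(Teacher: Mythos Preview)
Your proposal is correct and follows essentially the same route as the paper: apply the master stochastic-integral concentration inequality of \cp\ (their Theorem~18) to the one-parameter kernel class, verify the VC/entropy hypothesis via the Lipschitz property of $K$, and read off $\phi_{t,h}$. The paper's only differences are cosmetic but worth noting for the bookkeeping you flag as the main obstacle: it works with the \emph{unnormalised} class $\mathcal F=\{K((x-\cdot)/h):x\in\mathbb Q\}$ (constant envelope $\|K\|_\infty$, $L^2(\lebesgue)$-radius $\sqrt h\,\|K\|_{L^2(\lebesgue)}$, support measure $\le h$) and divides the resulting bound by $h$ at the end, and the variance/size inputs to Theorem~18 are the Lebesgue $L^2$-norm and the Lebesgue support measure rather than $L^2(\mu_b)$.
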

\begin{proof}
We apply Theorem 18 in \cp~to the class
\begin{equation}\label{functionclass}
\mathcal F:= \left\{K\left(\frac{x-\cdot}{h}\right):\,x\in\mathbb Q\right\}.
\end{equation}
For doing so, note that $\sup_{f\in\mathcal F}\|f\|_\infty\leq \|K\|_\infty$, and, for $\lebesgue$ denoting the Lebesgue measure,  
\begin{eqnarray*}
\left\|K\left(\frac{x-\cdot}{h}\right)\right\|^2_{L^2(\lebesgue)}&=& \int K^2\left(\frac{x-y}{h}\right)\d y\ =\ h \int K^2(z)\d z\ \leq\ h\|K \|^2_{L^2(\lebesgue)}
\end{eqnarray*} 
and $\sup_{f\in\mathcal F}\lebesgue(\supp(f))\leq h$. 
Due to the Lipschitz continuity of $K$, Lemma 23 in \cp~yields constants $\mathbbm A>0$, $v\geq 2$ (only depending on $K$) such that, for any probability measure $\mathbb Q$ on $\R$ and any $0<\epsilon<1$, $N(\epsilon,\mathcal F,\|\cdot\|_{L^2(\mathbb Q)})\leq (\mathbbm A/\epsilon)^v$.
Here and throughout the sequel, given some semi-metric $d$, $N(u,\FF,d)$, $u>0$, denotes the covering number of $\FF$ wrt $d$, i.e., the smallest number of balls of radius $u$ in $(\FF,d)$ needed to cover $\FF$. 
Since the assumption on the covering numbers of $\mathcal F$ in Theorem 18 in \cp~is fulfilled, Theorem 18 can be applied to $\FF$ with $\mathcal S := h\ \max\{\|K \|^2_{L^2(\lebesgue)},1\}$ and $\V:= \sqrt h\|K \|_{L^2(\lebesgue)}$.
In particular, there exist positive constants $\tilde\co$, $\Lambda$, $\tilde\co_0$ and $\co$ such that 
\begin{align*}
&\sup_{b\in \Sigma} \left(\E_b\left[\|\overline{\rho}_{t,K}(h)-\E_b\left[\overline{\rho}_{t,K}(h)\right]\|^p_\infty\right]\right)\p\\
&\qquad \leq\ \tilde\co\Bigg\{ 
\frac{1}{\sqrt t}\Big\{\left(\log\left(\sqrt{\frac{h + p\Lambda t}{h}}\right)\right)^{3/2} + \left(\log\left(\sqrt{\frac{h + p\Lambda t}{h}}\right)\right)^{1/2} + p^{3/2}\Big\}\\
&\hspace*{8em} +\frac{p}{th} + \frac{1}{h} \exp\left(-\tilde\co_0 t \right)
+ \frac{1}{\sqrt{th}} \left(\log\left(\sqrt{\frac{h + p\Lambda t}{h}}\right)\right)^{1/2}\\
&\hspace{8em} \, + \, \frac{1}{t^{3/4}\sqrt h}\left(1+\log\left(\sqrt{\frac{h + p\Lambda t}{h}}\right)\right) + \frac{1}{\sqrt{th}} \left\{\sqrt p+ \frac{p}{t^{1/4}}\right\}\Bigg\}\\
&\qquad\leq \phi_{t,h}(p),
\end{align*}
and \eqref{con_stoch_int} immediately follows.
\end{proof}


The uniform concentration results for stochastic integrals from \cp~further allow to prove the following result on the $\sup$-norm distance $\|t^{-1}L_t^\bullet(X)-\rho_{t,K}(h)\|_\infty$ between the local time and the kernel density estimator. 
The exponential inequality for this distance will be the key to transferring the Donsker theorem for the local time to the kernel density estimator. 
It can also be interpreted as a result on the uniform approximation error of the scaled local time by its smoothed version, noting that $\rho_{t,K}(h)$ can be seen as a convolution of a mollifier and a scaled version of diffusion local time.
The next result actually parallels Theorem 1 in \cite{gini09} which states a subgaussian inequality for the distribution function in the classical i.i.d.~set-up. 
It serves as an important tool for the analysis of the proposed adaptive scheme for simultaneous estimation of the distribution function and the associated density in \cite{gini09}.
The subsequent proposition plays an analogue role for the adaptive scheme for simultaneous estimation of the invariant density and the drift coefficient presented in Section \ref{sec:sim}.

\begin{proposition}[Theorem 15 in \cp]\label{theo:cath1}
Given a diffusion $\X$ with $b\in\Sigma(\beta,\mathcal L)$, for some $\beta,\mathcal L >0$,
consider some kernel function $K$ fulfilling \eqref{kernel} and $h=h_t\in(0,1)$ such that $h_t\geq t^{-1}$. 
Then, there exist positive constants $\VV,\xi_1,\Lambda_0,\Lambda_1$ and $\co$ such that, for all $\lambda \geq \lambda_0(h)$, where
\begin{align*}
\lambda_0(h)& := 8\Lambda_0\bigg[ \sqrt h \VV\e\mathbb L\left\{1+ \log\left(\frac{1}{\sqrt h \VV}\right)+\log t\right\}+\e\co\sqrt t \exp(-\xi_1 t)\\
&\hspace*{14em}\ + \ \sqrt t h^{\beta +1}\frac{\mathcal L}{2\lfloor \beta + 1\rfloor!}\int |K(v)v^{\beta +1 }|\d v\bigg],
\end{align*}
and any $t>1$,
\[
\sup_{b\in\Sigma(\beta,\mathcal L)}\P_b\left(\sqrt t\Big\|\rho_{t,K}(h)-\frac{L_t^\bullet(X)}{t}\Big\|_\infty>\lambda\right)\ \leq \ \exp\left(-\frac{\Lambda_1\lambda}{\sqrt h}\right).
\]
\end{proposition}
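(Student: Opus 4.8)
The plan is to turn the smoothing error $\rho_{t,K}(h)-L_t^\bullet(X)/t$ into a stochastic–integral empirical process over a \emph{uniformly bounded}, $h$–localised function class, and then feed it into Theorem~18 of \cp\ exactly as in the proof of Proposition~\ref{prop:csi}. Since $\sigma\equiv1$, the occupation times formula gives $\int_0^tg(X_u)\,\d u=\int_\R g(a)L_t^a(X)\,\d a$, so, writing $K_h(\cdot):=h^{-1}K(\cdot/h)$, we have $\rho_{t,K}(h)=K_h*\bigl(L_t^\bullet(X)/t\bigr)$. Applying Tanaka's formula $L_t^a(X)=|X_t-a|-|X_0-a|-\int_0^t\operatorname{sgn}(X_s-a)\,\d X_s$ once under the integral $\tfrac1t\int_\R K_h(x-a)(\cdot)\,\d a$ and once at $a=x$, and interchanging the $\d a$–integral with the stochastic integral (a legitimate stochastic Fubini step, $K_h$ being bounded with compact support), one gets the key representation
\[
\rho_{t,K}(h)(x)-\frac{L_t^x(X)}{t}\ =\ \underbrace{\frac1t\Bigl(r_h(X_t-x)-r_h(X_0-x)\Bigr)}_{=:\,\mathcal R_t(x)}\ -\ \frac1t\int_0^t\Psi_h(X_s-x)\,\d X_s ,
\]
with $r_h:=K_h*|\cdot|-|\cdot|$ and $\Psi_h:=K_h*\operatorname{sgn}-\operatorname{sgn}$. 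By symmetry of $K$ both $r_h$ and $\Psi_h$ are supported in $[-h/2,h/2]$, with $\|r_h\|_\infty\le h/2$ and $\|\Psi_h\|_\infty\le2$, and $\Psi_h=\Psi_1(\cdot/h)$ for a fixed bounded function $\Psi_1$ of bounded variation supported in $[-1/2,1/2]$. Thus $|\mathcal R_t(x)|\le h/t$ deterministically, and — crucially — the stochastic integral is driven by functions bounded by a \emph{constant} (not by $h^{-1}$) that live on sets of Lebesgue measure $h$; this is what will give a fluctuation of order $\sqrt h$ rather than the naive order.

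Next a short centering/bias computation. By stationarity $\E_b[\rho_{t,K}(h)(x)]=(K_h*\rho_b)(x)$, $\E_b[L_t^x(X)/t]=\rho_b(x)$, and $\E_b\bigl[\tfrac1t\int_0^tf(X_s)\,\d X_s\bigr]=\E_b[b(X_0)f(X_0)]$ for bounded $f$. Hence, with $Z_t(x):=\tfrac1t\int_0^t\Psi_h(X_s-x)\,\d X_s$,
\[
\rho_{t,K}(h)(x)-\frac{L_t^x(X)}{t}\ =\ \bigl(\mathcal R_t(x)-\E_b[\mathcal R_t(x)]\bigr)\ -\ \bigl(Z_t(x)-\E_b[Z_t(x)]\bigr)\ +\ \bigl((K_h*\rho_b)(x)-\rho_b(x)\bigr).
\]
The first bracket is $\le 2h/t$ in absolute value. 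The last bracket is the classical kernel bias: since $\rho_b\in\mathcal H_\R(\beta+1,\mathcal L)$ and $K$ is of order $\lfloor\alpha\rfloor\ge\lfloor\beta+1\rfloor$, a Taylor expansion to order $\lfloor\beta+1\rfloor$ gives $\|K_h*\rho_b-\rho_b\|_\infty\le\frac{\mathcal L}{\lfloor\beta+1\rfloor!}\,h^{\beta+1}\int|K(v)v^{\beta+1}|\,\d v$, which after multiplication by $\sqrt t$ is exactly the third summand of $\lambda_0(h)$.

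It remains to control $\sup_{x\in\R}|Z_t(x)-\E_b[Z_t(x)]|$. Here I would invoke Theorem~18 of \cp\ for the class $\mathcal F_\Psi:=\{\Psi_h(\cdot-x):x\in\Q\}=\{\Psi_1((\cdot-x)/h):x\in\Q\}$, checking its hypotheses as in the proof of Proposition~\ref{prop:csi}: $\sup_{f\in\mathcal F_\Psi}\|f\|_\infty\le\|\Psi_1\|_\infty$, $\|f\|_{L^2(\lebesgue)}^2\le h\|\Psi_1\|_{L^2(\lebesgue)}^2$, $\lebesgue(\supp f)\le h$, and a uniform polynomial $L^2$–covering bound for $\mathcal F_\Psi$ (obtained by splitting $\Psi_h$ into a Lipschitz compactly supported piece, to which Lemma~23 of \cp\ applies, and a bounded–variation piece of total variation $2$, handled by a standard VC estimate). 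Theorem~18 then yields an exponential inequality for $\sup_x|Z_t(x)-\E_b[Z_t(x)]|$ of the same form as in Proposition~\ref{prop:csi} but \emph{without} the prefactor $h^{-1}$ (as $Z_t$ carries no $1/h$), so that $\sqrt t$ times the resulting bound is of order $\sqrt h\bigl(\bigl(\log(ut/h)\bigr)^{3/2}+\sqrt u\bigr)+\sqrt t\exp(-\tilde\co_0 t)$ up to lower–order terms — the first part reproducing the first summand of $\lambda_0(h)$, the second the remainder $\e\co\sqrt t\exp(-\xi_1 t)$. Assembling the three contributions: for $\lambda\ge\lambda_0(h)$ — which dominates $2h/\sqrt t$, the kernel–bias term, and the $\log$ and $\exp$ parts above — one may choose $u$ of order $\lambda/\sqrt h$ (which is $\ge1$ since $\lambda\ge\lambda_0(h)\gtrsim\sqrt h$), so that $\sqrt t\,\|\rho_{t,K}(h)-L_t^\bullet(X)/t\|_\infty>\lambda$ forces $\sup_x|Z_t(x)-\E_b[Z_t(x)]|$ over the Theorem~18 threshold; this has probability $\le\e^{-u}\le\exp(-\Lambda_1\lambda/\sqrt h)$, which is the assertion.

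The step I expect to be the main obstacle is the reduction in the first paragraph together with its quantitative use in the last one: the Tanaka decomposition is exactly what exhibits $\rho_{t,K}(h)-L_t^\bullet(X)/t$ as a stochastic–integral empirical process over the \emph{bounded}, $h$–localised class $\mathcal F_\Psi$, and this boundedness (in contrast to the $h^{-1}$–sized kernels underlying $\overline\rho_{t,K}(h)$) is what makes the subexponential tail $\exp(-\Lambda_1\lambda/\sqrt h)$ attainable at all. After that, reading off Theorem~18 of \cp\ for $\mathcal F_\Psi$ and matching its output term by term with the three pieces of $\lambda_0(h)$ — while tracking all constants and staying in the regime $h\ge t^{-1}$ — is the real work; the analytic ingredients (occupation times formula, Tanaka's formula, stochastic Fubini, the Hölder bias bound, the covering–number estimate) are routine.
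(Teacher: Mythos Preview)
The paper does not prove this proposition here: it is quoted verbatim as Theorem~15 of the companion paper \cp, so there is no in-paper argument to compare against.

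That said, your strategy is sound and the structure of $\lambda_0(h)$ makes it very likely that \cp\ proceeds the same way. The Tanaka--occupation-times identity is exactly the right device: it writes $\rho_{t,K}(h)-t^{-1}L_t^\bullet(X)$ as a stochastic-integral empirical process over the \emph{uniformly bounded}, $h$-localised class $\{\Psi_h(\cdot-x):x\in\R\}$, plus a deterministic $O(h/t)$ remainder and the H\"older bias $K_h*\rho_b-\rho_b$. The boundedness of $\Psi_h$ (as opposed to the $h^{-1}$-sized kernels driving $\overline\rho_{t,K}$) is precisely what produces fluctuation scale $\sqrt h$ rather than $(th)^{-1/2}$, and the three summands of $\lambda_0(h)$ --- an entropy/log term at scale $\sqrt h\,\VV$, a localisation remainder $\sqrt t\,\e^{-\xi_1 t}$, and the kernel bias --- match your three contributions term by term.

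Two points to tighten. First, $\Psi_1$ has a jump at $0$, so Lemma~23 of \cp\ (which relies on Lipschitz continuity) does not apply directly; your split into a Lipschitz compactly supported piece and a bounded-variation piece is the right fix, but the second family still requires a short separate covering argument (e.g.\ via VC classes of indicators). Second, the tail-matching step ``choose $u$ of order $\lambda/\sqrt h$'' is a little glib: the raw output of Theorem~18 contains a mixture of $\sqrt u$, $u$ and $u^{3/2}$ terms with different $h$- and $t$-prefactors, and one must verify that for $h\ge t^{-1}$ and \emph{all} $\lambda\ge\lambda_0(h)$ the dominant growth in $u$ really is at most linear at scale $\sqrt h$. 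This is where the hypothesis $h\ge t^{-1}$ actually enters; it is routine to check but should not be waved through.
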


The very first step of our approach to $\sup$-norm adaptive drift estimation consists in estimating the invariant density in $\sup$-norm loss.
Corresponding upper bounds on the $\sup$-norm risk have been investigated in \cp.
We next cite these bounds for the local time estimator and the kernel density estimator. 
Our estimation procedure does \emph{not} involve the local time density estimator.
For the sake of presenting a complete statistical $\sup$-norm analysis of ergodic scalar diffusions based on continuous observations, we still include it here.

\begin{lemma}[Moment bound on the supremum of centred diffusion local time, Corollary 16 of \cp]\label{lem:difflt}
Let $\X$ be as in Definition \ref{def:B}.
Then, there are positive constants $\zeta,\zeta_1$ such that, for any $p,t\geq 1$, 
\begin{align*}
\sup_{b\in\Sigma(\C,A,\gamma,1)}\left(\E_b\left[\Big\|\frac{L_t^\bullet(\X)}{t}-\rho_b\Big\|_\infty^p\right]\right)\p\ \leq\ \zeta\left(\frac{p}{t}+\frac{1}{\sqrt t}\left(1+\sqrt p+\sqrt{\log t}\right)+t\e^{-\zeta_1 t}\right).
\end{align*}
\end{lemma}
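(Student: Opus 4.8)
The plan is to combine the Tanaka--It\^o formula with the uniform concentration inequalities for stochastic integrals and additive functionals of $\X$ established in \cp~(the very tool applied in the proof of Proposition~\ref{prop:csi}). Fix $b\in\Sigma(\C,A,\gamma,1)$ and, for $a\in\R$, set $R_t^a:=t^{-1}\big(|X_t-a|-|X_0-a|\big)$, $M_t^a:=t^{-1}\int_0^t\operatorname{sgn}(X_s-a)\,\d W_s$ and $E_t^a:=t^{-1}\int_0^t\operatorname{sgn}(X_s-a)b(X_s)\,\d s-\E_b[\operatorname{sgn}(X_0-a)b(X_0)]$. The Tanaka formula at level $a$ reads $L_t^a(\X)=|X_t-a|-|X_0-a|-\int_0^t\operatorname{sgn}(X_s-a)b(X_s)\,\d s-\int_0^t\operatorname{sgn}(X_s-a)\,\d W_s$. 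Since $\sigma\equiv1$ we have $\rho_b'=2b\rho_b$ with $\rho_b(\pm\infty)=0$, hence $\int_\R\operatorname{sgn}(x-a)\rho_b'(x)\,\d x=-2\rho_b(a)$ and, by stationarity, $\E_b[\operatorname{sgn}(X_0-a)b(X_0)]=\tfrac12\int_\R\operatorname{sgn}(x-a)\rho_b'(x)\,\d x=-\rho_b(a)$. Therefore
\begin{equation*}
\frac{L_t^a(\X)}{t}-\rho_b(a)\ =\ R_t^a-E_t^a-M_t^a,\qquad a\in\R,
\end{equation*}
and it suffices to bound the $L^p(\P_b)$--norm of $\sup_{a\in\R}|R_t^a|$, $\sup_{a\in\R}|E_t^a|$ and $\sup_{a\in\R}|M_t^a|$, uniformly over $\Sigma(\C,A,\gamma,1)$.

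\emph{Boundary term.} For every $a$, $|R_t^a|\leq t^{-1}|X_t-X_0|\leq t^{-1}(|X_t|+|X_0|)$, so by stationarity $\big(\E_b[\sup_{a}|R_t^a|^p]\big)^{1/p}\leq 2t^{-1}\big(\E_b[|X_0|^p]\big)^{1/p}$. The localisation condition in \eqref{eq:sigma} forces $\rho_b$ to have exponentially decaying tails with constants depending only on $\C,A,\gamma$, whence $\sup_{b\in\Sigma(\C,A,\gamma,1)}\big(\E_b[|X_0|^p]\big)^{1/p}\lesssim p$ for all $p\geq1$; this produces the $p/t$ term.

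\emph{Stochastic integral and additive functional.} Both $\{M_t^a\}_{a}$ and $\{E_t^a\}_{a}$ are continuous-time empirical processes indexed by the class $\mathcal F:=\{\operatorname{sgn}(\cdot-a):a\in\Q\}$ (replacing $\R$ by $\Q$ is harmless as the relevant processes are continuous in $a$). This class is bounded by $1$ and of VC type, i.e.\ $N(\epsilon,\mathcal F,\|\cdot\|_{L^2(\Q)})\lesssim\epsilon^{-v}$ uniformly over probability measures $\Q$, and $\|\operatorname{sgn}(\cdot-a)-\operatorname{sgn}(\cdot-a')\|_{L^2(\mu_b)}^2=4\,\mu_b\big((a\wedge a',a\vee a')\big)\lesssim|a-a'|$. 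Since the martingale $s\mapsto\int_0^s\operatorname{sgn}(X_u-a)\,\d W_u$ has the \emph{deterministic} quadratic variation $s$, the stochastic-integral concentration inequality of \cp, specialised to this bounded class of uniformly bounded weak variance, gives $\big(\E_b[\sup_{a}|M_t^a|^p]\big)^{1/p}\lesssim t^{-1/2}\big(1+\sqrt p+\sqrt{\log t}\big)$, uniformly over $\Sigma(\C,A,\gamma,1)$. For $E_t^a$ one invokes the companion inequality of \cp~for additive functionals $t^{-1}\int_0^t f(X_s)\,\d s-\E_b[f(X_0)]$, applied to $\{\operatorname{sgn}(\cdot-a)b(\cdot):a\in\Q\}$, which is of VC type with envelope $|b(\cdot)|\leq\C(1+|\cdot|)$ lying in $L^p(\mu_b)$ with norm $\lesssim p$ and with $\sup_a\Var_{\mu_b}(\operatorname{sgn}(\cdot-a)b)\leq\E_b[b(X_0)^2]\lesssim1$, all uniformly over $\Sigma(\C,A,\gamma,1)$. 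Writing $E_t^a=-t^{-1}\big(U_a(X_t)-U_a(X_0)\big)+t^{-1}\int_0^tU_a'(X_s)\,\d W_s$ with $U_a$ solving $\tfrac12U_a''+bU_a'=-\big(\operatorname{sgn}(\cdot-a)b-\E_b[\operatorname{sgn}(X_0-a)b(X_0)]\big)$, the martingale part contributes a further $t^{-1/2}\big(1+\sqrt p+\sqrt{\log t}\big)$ (the asymptotic variance $\int(U_a')^2\rho_b$ being bounded uniformly in $a,b$), the boundary term the $p/t$ contribution (via $\sup_a\sup_b\big(\E_b[|U_a(X_0)|^p]\big)^{1/p}\lesssim p$), and the attendant exit-time estimates the remainder $t\,\e^{-\zeta_1 t}$.

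Summing the three bounds by the triangle inequality in $L^p(\P_b)$ and taking the supremum over $b$ yields the claim, with $\zeta,\zeta_1$ depending only on $\C,A,\gamma$ and the constants supplied by \cp. The principal obstacle is the uniformity in the level $a$: one has to pass from the fixed-$a$ Burkholder--Davis--Gundy bound for $M_t^a$ and the fixed-$a$ martingale approximation for $E_t^a$ to estimates uniform in $a$, which demands a genuine maximal (generic-chaining) inequality for these continuous-time processes --- precisely what the concentration inequalities of \cp~deliver --- together with a preliminary localisation of the level to $\{|a|\lesssim\log t\}$, outside of which both $L_t^a(\X)/t$ and $\rho_b(a)$ are, $\P_b$-typically, exponentially small; this localisation is the source of the $\sqrt{\log t}$ factor.
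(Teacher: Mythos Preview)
The paper does not prove this lemma; it is quoted verbatim from \cp\ (Corollary~16 there), so there is no in-paper argument to match against. However, the surrounding text gives strong hints about how \cp\ proceeds, and your route differs from it in a way worth flagging.

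\medskip

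\textbf{Comparison with the paper's approach.} The martingale representation actually used for local time in this paper (see the proof of Theorem~\ref{theo:eff}, citing Proposition~1.11 in Kutoyants) is the \emph{one-step} decomposition
\[
\frac{L_t^a(\X)}{t}-\rho_b(a)\ =\ \frac{R(X_t,a)-R(X_0,a)}{t}\ -\ \frac{2\rho_b(a)}{t}\int_0^t\frac{\mathds{1}\{X_s>a\}-F_b(X_s)}{\rho_b(X_s)}\,\d W_s,
\]
with $R(x,a)=2\rho_b(a)\int_0^x(\mathds{1}\{v>a\}-F_b(v))\rho_b^{-1}(v)\,\d v$. This already has the form ``bounded boundary term + single martingale with uniformly bounded integrand'', and the chaining is applied directly to that martingale. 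Your two-stage route (Tanaka for $L_t^a$, then a Poisson-equation representation for $E_t^a$) is in fact algebraically the same thing: computing $U_a'$ explicitly gives $U_a'(z)=-\operatorname{sgn}(z-a)+2\rho_b(a)h(z,a)$, so your two stochastic integrals $-M_t^a$ and $t^{-1}\int_0^t U_a'(X_s)\,\d W_s$ combine to leave exactly the Kutoyants martingale above. The direct representation is cleaner and makes the uniform boundedness of the integrand (hence of the quadratic variation) immediate, which is what drives the chaining without further input.

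\medskip

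\textbf{A circularity concern.} Your write-up repeatedly ``invokes the concentration inequalities of \cp'' for stochastic integrals and additive functionals as black boxes. But Appendix~\ref{app:C} of the present paper makes explicit that those general results (the analogue of Theorem~18 in \cp, used in Proposition~\ref{prop:csi}) themselves \emph{rely on the centred local-time bound} via the occupation-times formula---see the sentence ``The upper bound for $\Ma_{1,t}$ is based on another deep result on the $\sup$-norm of the centred local time''. So citing them to prove Lemma~\ref{lem:difflt} is circular. The fix is easy in substance but important in logic: for the specific martingales here the integrands are bounded uniformly in $a$ (indeed, for your $M_t^a$ the quadratic variation is deterministic), so the generic-chaining step can be carried out \emph{directly}, using only BDG and entropy bounds for the VC class of half-line indicators, together with the localisation $|a|\lesssim t$ from the maximal inequality for $\sup_{s\le t}|X_s|$ (Lemma~1 in \cp). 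This is presumably what \cp\ does, proving the local-time result first and only afterwards feeding it into the general stochastic-integral machinery. Your sketch should make that logical order explicit rather than appeal to the downstream theorems.
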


In \cp, we have also shown the analogue fundamental result for the $\sup$-norm risk of the kernel density estimator.
The following upper bounds will be essential for deriving convergence rates of the Nadaraya--Watson-type drift estimator (see \eqref{est:drift}).

\begin{proposition}[Concentration of the kernel invariant density estimator, Corollary 14 of \cp]\label{prop:con_kernel_density_estimator}
Let $\X$ be a diffusion with $b\in\Sigma(\beta,\mathcal L)$, for some $\beta,\mathcal L>0$, and let $K$ be a kernel function fulfilling \eqref{kernel}.
Given some positive bandwidth $h$, define the estimator $\rho_{t,K}(h)$ according to \eqref{est:dens}. 
Then, there exist positive constants $\nu_1,\nu_2,\nu_3$ such that, for any $p,u\geq 1$, $t>0$,
\begin{align}\label{eq:prop6} 
\sup_{b\in\Sigma(\beta,\mathcal L)}\left(\E_b\left[\left\|\rho_{t,K}(h) - \rho_b\right\|^p_\infty\right]\right)\p
&\leq\ \psi_{t,h}(p),\\\nonumber
\sup_{b\in\Sigma(\beta,\mathcal L)}\P_b\left(\left\|\rho_{t,K}(h)-\rho_b\right\|_\infty\geq\e\psi_{t,h}(u)\right)&\leq\ \e^{-u},
\end{align}
for 
\begin{align}
\begin{split}\label{def:psi}
\psi_{t,h}(u)&:=\ \frac{\nu_1}{\sqrt t}\left\{1+\sqrt{\log\left(\frac{1}{\sqrt h}\right)} + \sqrt{\log(ut)}+\sqrt u\right\}+\frac{\nu_2u}{t}+\frac{1}{h}\e^{-\nu_3 t} \\
&\hspace*{14em}+\frac{\mathcal Lh^{\beta +1}}{\lfloor \beta+1\rfloor!}\int |v^{\beta +1}K(v)|\d v.
\end{split}
\end{align}
\end{proposition}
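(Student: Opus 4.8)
The plan is to split the error pointwise as
$\rho_{t,K}(h)(x)-\rho_b(x)=\bigl(\rho_{t,K}(h)(x)-\E_b[\rho_{t,K}(h)(x)]\bigr)+\bigl(\E_b[\rho_{t,K}(h)(x)]-\rho_b(x)\bigr)$,
treat the deterministic bias term by a Taylor/kernel argument uniformly in $x\in\R$ and $b\in\Sigma(\beta,\mathcal L)$, and control the centred stochastic term by the uniform exponential inequality for continuous-time empirical processes of the form $t^{-1}\int_0^t f(X_s)\d s-\E_b[f(X_0)]$ provided in \cp, applied to the function class $\mathcal F$ from \eqref{functionclass}. This mirrors the structure of the proof of Proposition \ref{prop:csi}, the only substantive change being that one now uses the additive-functional inequality rather than the stochastic-integral one.

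For the bias term, stationarity of $\X$ under $\mu_b$ gives $\E_b[\rho_{t,K}(h)(x)]=\frac1h\int K\!\left(\frac{x-y}{h}\right)\rho_b(y)\,\d y=\int K(v)\rho_b(x-hv)\,\d v$. Since $\rho_b\in\mathcal H_\R(\beta+1,\mathcal L)$ and, by \eqref{kernel}, $K$ is of order $\lfloor\alpha\rfloor\geq\lfloor\beta+1\rfloor$ so that its moments up to order $\lfloor\beta+1\rfloor$ vanish, a Taylor expansion of $\rho_b$ around $x$ to degree $\lfloor\beta+1\rfloor$ together with the Hölder modulus of continuity of $\rho_b^{(\lfloor\beta+1\rfloor)}$ yields $\|\E_b[\rho_{t,K}(h)]-\rho_b\|_\infty\leq\frac{\mathcal L h^{\beta+1}}{\lfloor\beta+1\rfloor!}\int|v^{\beta+1}K(v)|\,\d v$, which is the last term in $\psi_{t,h}$ and is uniform over $b\in\Sigma(\beta,\mathcal L)$.

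For the stochastic term I would argue exactly as in the proof of Proposition \ref{prop:csi}: for $\mathcal F=\{K((x-\cdot)/h):x\in\mathbb Q\}$ one has $\sup_{f\in\mathcal F}\|f\|_\infty\leq\|K\|_\infty$, $\|K((x-\cdot)/h)\|_{L^2(\lebesgue)}^2\leq h\|K\|_{L^2(\lebesgue)}^2$, $\sup_{f\in\mathcal F}\lebesgue(\supp f)\leq h$, and, by Lipschitz continuity of $K$ and Lemma 23 in \cp, $N(\epsilon,\mathcal F,\|\cdot\|_{L^2(\mathbb Q)})\leq(\mathbbm A/\epsilon)^v$ uniformly in the probability measure $\mathbb Q$. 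Feeding these into the \cp{} exponential inequality for additive functionals, with variance proxy $\mathcal S$ of order $h$ and envelope $\|K\|_\infty$, produces the moment and tail bounds with the remaining terms of $\psi_{t,h}$, namely $\frac{\nu_1}{\sqrt t}\{1+\sqrt{\log(1/\sqrt h)}+\sqrt{\log(ut)}+\sqrt u\}+\frac{\nu_2u}{t}+h^{-1}\e^{-\nu_3t}$; here the $\sqrt{\log(1/\sqrt h)}$ contribution comes from the entropy integral $\int_0^{\sqrt h\|K\|_{L^2}}\sqrt{\log(\mathbbm A/\epsilon)}\,\d\epsilon$ and the $h^{-1}\e^{-\nu_3t}$ term is the exponential martingale-approximation remainder. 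Path continuity together with Lipschitz continuity of $K$ lets one pass from $\sup_{x\in\mathbb Q}$ to $\sup_{x\in\R}$. Combining with the bias bound via the triangle inequality and absorbing constants gives \eqref{eq:prop6}, and the tail estimate follows in the same way from the exponential form of the inequality.

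The main obstacle is bookkeeping of uniformity and of the exact form of the rate: one must verify that the constants produced by the \cp{} concentration inequality depend only on the structural parameters $(\C,A,\gamma)$ of the class and on $K$, not on the individual drift $b$, so that all bounds are genuinely uniform over $\Sigma(\beta,\mathcal L)$; and one must track how the covering-number exponent $v$ and the scalings $\mathcal S$ of order $h$, $\V$ of order $\sqrt h$ translate precisely into the $t^{-1/2}$, $t^{-1}$ and $\sqrt{\log(1/\sqrt h)}$ terms rather than into coarser bounds. Everything else is the routine kernel bias computation and the same entropy verification already carried out for Proposition \ref{prop:csi}.
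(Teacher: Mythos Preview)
Your proposal is correct and follows essentially the same route as the paper: the result is imported here as Corollary~14 of \cp\ without a self-contained proof, but the argument you outline---bias/variance split, the standard Taylor bound on the bias using $\rho_b\in\mathcal H_\R(\beta+1,\mathcal L)$ and the kernel order, and control of the centred additive functional by applying the \cp\ concentration inequality to the class $\mathcal F$ of \eqref{functionclass} exactly as in the proof of Proposition~\ref{prop:csi}---is precisely how the companion paper obtains it (cf.\ the sketch in Appendix~\ref{app:C}). The only point worth flagging is terminological: the relevant \cp\ result here is the one for empirical diffusion functionals $t^{-1}\int_0^t f(X_s)\,\d s$ (underlying Proposition~\ref{theo:cath1} and Lemma~\ref{lem:difflt}) rather than Theorem~18 used for stochastic integrals, but you have already identified this correctly.
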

Specifying to $h=h_t\sim t^{-1/2}$, an immediate consequence of \eqref{eq:prop6} is the convergence rate $\sqrt{\log t/t}$ for the risk $\sup_{b\in\Sigma(\beta,\mathcal L)}\E_b\left[\left\|\rho_{t,K}(h) - \rho_b\right\|_\infty\right]$ of the kernel density estimator $\rho_{t,K}(t^{-1/2})$.
Note that we obtain the parametric convergence rate for the bandwidth choice $t^{-1/2}$ which in particular does not depend on the (typically unknown) order of smoothness of the drift coefficient. 
Thus, there is no extra effort needed for adaptive estimation of the invariant density. This phenomenon appears only in the scalar setting.

\bigskip

\section{Donsker-type theorems and asymptotic efficiency of kernel invariant density estimators}\label{sec:3}
This section is devoted to the study of weak convergence properties of the kernel density estimator $\rho_{t,K}$.
Using the exponential inequality for $\|t^{-1}L^\bullet_t(X)-\rho_{t,K}(h)\|_\infty$ (Proposition \ref{theo:cath1} from Section \ref{sec:pre}), we derive a uniform CLT for the kernel invariant density estimator.
In particular, the result holds for the `universal' bandwidth choice $h\sim t^{-1/2}$.
Furthermore, we use the general theory developed in \cite{vdvw96} for establishing asymptotic semiparametric efficiency of $\rho_{t,K}(t^{-1/2})$ in $\ell^\infty(\R)$.

\subsection{Donsker-type theorems}
The exponential inequality for the $\sup$-norm difference of the kernel and the local time density estimator stated in Proposition \ref{theo:cath1} allows to transfer an existing Donsker theorem for the local time density estimator presented in \cite{vdvvz05}.

\begin{proposition}\label{donskerdensity}
Given a diffusion $\X$ with $b\in\Sigma(\beta,\LL)$, consider some kernel function $K$ fulfilling \eqref{kernel}. 
Define the estimator $\rho_{t,K}(h)$ according to \eqref{est:dens} with bandwidth $h=h_t\in [t^{-1},1)$ satisfying $\sqrt t h^{\beta+1}\rightarrow 0$, as $t\to \infty.$
Then,
\[
\sqrt t\left(\rho_{t,K}(h)-\rho_b\right)\ \stackrel{\P_b}{\Longrightarrow}\ \H,\qquad \text{ as } t\to\infty,
\]
in $\ell^\infty(\R)$, where $\H$ is a centered, Gaussian random map with covariance structure
\begin{equation}\label{cov:H}
\E[\H(x)\H(y)]=4m(\R)\rho_b(x)\rho_b(y)\int_\R (\mathds{1}\{[x,\infty)\}-F_b)(\mathds{1}\{[y,\infty)\}-F_b)\d s,
\end{equation}
$m$ and $s$ denoting the speed measure and the scale function of $\X$, respectively.
\end{proposition}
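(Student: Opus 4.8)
The plan is to deduce this Donsker-type theorem for the kernel density estimator from the corresponding Donsker theorem for the local time estimator $\rho_0(\bullet)=t^{-1}L_t^\bullet(\X)$, which is available in \cite{vdvvz05} and asserts precisely that $\sqrt t(\rho_0-\rho_b)\Rightarrow\H$ in $\ell^\infty(\R)$, with $\H$ the centered Gaussian process having the covariance \eqref{cov:H}. The key observation is that
\[
\sqrt t\left(\rho_{t,K}(h)-\rho_b\right)\ =\ \sqrt t\left(\frac{L_t^\bullet(\X)}{t}-\rho_b\right)\ +\ \sqrt t\left(\rho_{t,K}(h)-\frac{L_t^\bullet(\X)}{t}\right),
\]
so it suffices to show the second term converges to zero in $\P_b$-probability in $\ell^\infty(\R)$, i.e.\ that $\sqrt t\,\|\rho_{t,K}(h)-t^{-1}L_t^\bullet(\X)\|_\infty\stackrel{\P_b}{\to}0$; then Slutsky's lemma in the Banach space $\ell^\infty(\R)$ (which applies since the limit $\H$ is a tight Borel random element) gives the claim. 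This is exactly where Proposition \ref{theo:cath1} enters.

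So the core of the proof is to invoke Proposition \ref{theo:cath1} with the present bandwidth $h=h_t\in[t^{-1},1)$ and check that the resulting tail bound forces $\sqrt t\,\|\rho_{t,K}(h)-t^{-1}L_t^\bullet(\X)\|_\infty\to0$ in probability. Fix $\delta>0$. I would like to apply the inequality with $\lambda=\delta$, but the proposition only yields a bound for $\lambda\geq\lambda_0(h)$, so the first step is to verify that $\lambda_0(h_t)\to0$ as $t\to\infty$. Inspecting the three contributions to $\lambda_0(h)$: the first is of order $\sqrt{h_t}\log t$, which tends to $0$ since $h_t<1$ and $h_t$ can be taken to go to zero (or, if $h_t$ does not vanish one still only needs $\sqrt{h_t}\log(1/\sqrt{h_t})$ bounded — but in the regime of interest $\sqrt t h_t^{\beta+1}\to0$ forces $h_t\to0$, so $\sqrt{h_t}\log t\to0$ as long as $h_t=o((\log t)^{-2})$; more carefully, $h_t\le c\, t^{-1/(2(\beta+1))}$ so $\sqrt{h_t}\,\log t\to0$); the second is $\sqrt t\,\e^{-\xi_1 t}\to0$; and the third is $\sqrt t\,h_t^{\beta+1}\cdot\frac{\mathcal L}{2\lfloor\beta+1\rfloor!}\int|K(v)v^{\beta+1}|\d v$, which tends to $0$ precisely by the standing assumption $\sqrt t\,h_t^{\beta+1}\to0$. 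Hence $\lambda_0(h_t)\to0$, so for every $\delta>0$ we have $\lambda_0(h_t)<\delta$ for all $t$ large. For such $t$, Proposition \ref{theo:cath1} gives
\[
\P_b\left(\sqrt t\,\Big\|\rho_{t,K}(h_t)-\frac{L_t^\bullet(\X)}{t}\Big\|_\infty>\delta\right)\ \le\ \exp\left(-\frac{\Lambda_1\delta}{\sqrt{h_t}}\right)\ \longrightarrow\ 0,
\]
since $h_t\to0$ (and uniformly over $b\in\Sigma(\beta,\LL)$). This establishes the required convergence to zero in probability; combined with the Donsker theorem for the local time estimator and Slutsky's lemma, the proposition follows, and in particular it holds for the universal choice $h_t\sim t^{-1/2}$ provided $\beta>0$ (so that $\sqrt t\,t^{-(\beta+1)/2}=t^{-\beta/2}\to0$).

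The only genuinely nontrivial ingredient is the Donsker theorem for the local time estimator itself, which I would cite from \cite{vdvvz05} (and \cite{kutoyants98} in the scalar diffusion case); reproducing it is not needed here. A minor technical point worth spelling out is the validity of Slutsky's lemma in $\ell^\infty(\R)$: weak convergence there is in the sense of Hoffmann--J\o{}rgensen/van der Vaart--Wellner, and one uses that $X_t+Y_t\Rightarrow X$ whenever $X_t\Rightarrow X$ with $X$ tight (Borel measurable, separably supported) and $\|Y_t\|_\infty\to0$ in outer probability — this is standard (e.g.\ \cite[Ch.~1.4]{vdvw96}) and applies since $\H$ has continuous sample paths on $\R$ with values vanishing at $\pm\infty$, hence is a tight Borel random element of $\ell^\infty(\R)$. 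The main obstacle, such as it is, is purely bookkeeping: one must handle the range $h_t\to0$ carefully so that both $\lambda_0(h_t)\to0$ and $\exp(-\Lambda_1\delta/\sqrt{h_t})\to0$, i.e.\ check that the assumption $\sqrt t h_t^{\beta+1}\to0$ together with $h_t\ge t^{-1}$ is exactly the right condition to make the bias term in $\lambda_0$ and the concentration exponent cooperate. No new probabilistic input beyond Proposition \ref{theo:cath1} is required.
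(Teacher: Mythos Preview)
Your proposal is correct and follows essentially the same route as the paper: decompose $\sqrt t(\rho_{t,K}(h)-\rho_b)$ via the local time estimator, invoke the Donsker theorem for $t^{-1}L_t^\bullet(\X)$ from \cite{vdvvz05}, and use Proposition~\ref{theo:cath1} to show $\sqrt t\|\rho_{t,K}(h)-t^{-1}L_t^\bullet(\X)\|_\infty=o_{\P_b}(1)$. The only cosmetic difference is that the paper applies Proposition~\ref{theo:cath1} with a $t$-dependent threshold $\lambda_t:=C(\sqrt{h_t}(1+\log t)+\sqrt t\,h_t^{\beta+1})\geq\lambda_0(h_t)$ (so the exponential bound vanishes through the $\log t$ term), whereas you fix $\lambda=\delta$ and use $h_t\to0$ directly; both arguments rely on $\sqrt{h_t}\log t\to0$, which follows from $h_t=o(t^{-1/(2(\beta+1))})$ as you note. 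One point worth flagging: the applicability of the \cite{vdvvz05} Donsker theorem to the class $\Sigma(\beta,\mathcal L)$ is not automatic---the paper verifies their integrability and tail conditions in a separate lemma (Lemma~\ref{lem:gaussexists})---so your citation should be understood as deferring that check rather than taking it for granted.
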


\begin{proof}
We apply Proposition \ref{theo:cath1} to show that 
\begin{equation}\label{convinprop}
\sqrt t\Big\|\rho_{t,K}(h)-\frac{L_t^\bullet(X)}{t}\Big\|_\infty\ =\ o_{\P_b}(1).
\end{equation}
There exists a constant $C>0$ such that 
$\lambda_t:=C\left(\sqrt h \left( 1+ \log t\right) + \sqrt t h^{\beta + 1}\right)$ 
fulfills the assumption $\lambda_t\geq \lambda_0(h)$, for any bandwidth $h=h_t$ satisfying the above conditions.
Since $\lambda_t=o(1)$, for any $\epsilon>0$ and $t$ sufficiently large,
\begin{align*}
\P_b\left(\sqrt t\|\rho_{t,K}(h)-t^{-1}L_t^\bullet(X)\|_\infty>\epsilon\right)
&\leq \ \P_b\left(\sqrt t\|\rho_{t,K}(h)-t^{-1}L_t^\bullet(X)\|_\infty>\lambda_t\right)\\
&\leq \ \exp\left(-\Lambda_1\frac{\lambda_t }{\sqrt h}\right)\\
&=\ \exp\left(-\Lambda_1C\left(\left( 1+ \log t\right) + \sqrt t h^{\beta + \frac{1}{2}}\right)\right)\\
&\longrightarrow\ 0, \quad \text{as }t\to\infty.
\end{align*}
Consequently, \eqref{convinprop} holds, and Lemma \ref{lem:gaussexists} from Section \ref{appeff} gives the assertion.
\end{proof}

\begin{remark}
Donsker-type results turn out to be useful far beyond the question of the behaviour of the density estimator wrt the $\sup$-norm as a specific loss function. 
In particular, they provide immediate access to solutions of statistical problems concerned with functionals of the invariant density $\rho_b$.
Clearly, this includes the estimation of bounded, linear functionals of $\rho_b$ such as integral functionals, to name just one common class. 
As an instance, \cite{kutoyants07} study the estimation of moments $\mu_b(G)$ for known functions $G$. 
The target $\mu_b(G)$ is estimated by the empirical moment estimator $t^{-1}\int_0^t G(X_s)\d s$, and it is shown that this estimator is asymptotically efficient in the sense of local asymptotic minimaxity (LAM) for polynomial loss functions.
Parallel results can directly be deduced from the Donsker theorem. 
Defining the linear functional $\Phi_G\colon\ell^\infty(\R)\to \R$, $h\mapsto \int h(x)G(x)\d x$, the target can be written as $\Phi_G(\rho_b)$, and the empirical moment estimator equals the linear functional applied to the local time estimator, that is, 
\[\frac{1}{t}\int_0^t G(X_s)\d s\ =\ \Phi_G(t^{-1}L^\bullet_t(X)).\] 
Thus, if $\Phi_G$ is bounded, it follows from the results of \cite{vdvvz05} (see Lemma \ref{lem:gaussexists} in Section \ref{appeff}) and from Proposition \ref{donskerdensity}, respectively, that 
\[\sqrt t\left(\Phi_G(t^{-1}L_t^\bullet(X))-\Phi_G(\rho_b) \right)\text{ as well as }\sqrt t\left(\Phi_G(\rho_{t,K}(t^{-1/2}))-\Phi_G(\rho_b) \right)\] 
are asymptotically normal with the limiting distribution $\Phi_G(\H)$.
Optimality of $\Phi_G(\H)$ in the sense of the convolution theorem 3.11.2 in \cite{vdvw96} will be shown in the next section. 
\end{remark}

Not only linear, but also nonlinear functions that allow for suitable linearisations can be analysed, once the required CLTs and optimal rates of convergence are given.
This is related to the so-called \emph{plug-in property} introduced in \cite{biri03}. 
The suggested connection is explained a bit more detailed in \cite{gini09}.

\subsection{Semiparametric lower bounds for estimation of the invariant density}\label{subsec:crinv}
We now want to analyse semiparametric optimality aspects of the limiting distribution in Proposition \ref{donskerdensity} as treated in Chapter 3.11 in \cite{vdvw96} or Chapter 25 of \cite{vdv00}. 
To this end, we first look at lower bounds.

Denote by $\P_{t,h}$ the law of a diffusion process $Y^t:=(Y_s)_{0\leq s\leq t}$ with perturbed drift coefficient $b+t^{-1/2}h$, given as a solution of the SDE
\[\d Y_s\ =\ \left(b(Y_s)+\frac{h(Y_s)}{\sqrt t}\right)\d s + \d W_s,\quad Y_0=X_0,\]
and denote by $\rho_{b+t^{-1/2}h}$ the associated invariant density. 
Set $\P_b:=\P_{t,0}$, and define the set of experiments 
\begin{equation}\label{def:set}
\left\{C(0,t),\mathcal B\left(C(0,t)\right),\P_{t,h}\colon h\in G\right\},\quad t>0,
\end{equation}
with $G=\ell^\infty(\R)\cap \text{Lip}_{\operatorname{loc}}(\R)$ viewed as a linear subspace of $\l^2(\mu_b)$.
By construction and Girsanov's Theorem (cf.~\cite[Theorem 7.18]{lipshi01}), the log-likelihood is given as
\begin{align*}
\log\left(\frac{\d\P_{t,h}}{\d\P_b}\right)(X^t)&=\ \frac{1}{\sqrt t}\int_0^t h(X_s)\d W_s-\frac{1}{2t}\int_0^th^2(X_s)\d s\\
&=\ \Delta_{t,h}-\frac{1}{2}\|h\|_{L^2(\mu_b)}^2+o_{\P_b}(1),
\end{align*}
where $\Delta_{t,h}:=t^{-1/2}\int_0^Th(X_s)\d W_s$.
Here, the last line follows from the law of large numbers for ergodic diffusions, and the CLT immediately gives 
$\Delta_{t,h}\stackrel{\P_b}{\Longrightarrow}\NN(0,\|h\|_{L^2(\mu_b)}^2)$.
Thus, \eqref{def:set} is an asymptotically normal model.
Lemma \ref{lemmadifferentiability} from Section \ref{appeff} now implies that the sequence $\Psi(\P_{t,h}):=\rho_{b+t^{-1/2}h}$, $t>0,$ is \emph{regular} (or \emph{differentiable}).
In fact, it holds
\begin{equation}\label{differentiability}
\sqrt T(\Psi(\P_{t,h})-\Psi(\P_b))\ \longrightarrow_{t\to\infty}\ A'h\quad\text{ in }\ell^\infty(\R), \text{ for any }h\in G,
\end{equation}
for the continuous, linear operator  
\[A'\colon (G,\l^2({\mu_b})) \to \left(\ell^\infty(\R),\|\cdot\|_\infty\right),\  h \mapsto 2\rho_b(H-\mu_b(H)),\] 
with $H(\cdot):=\int_0^\cdot h(v)\d v$.
We want to determine the optimal limiting distribution for estimating the invariant density $\rho_b=\Psi(\P_{t,0})$ in $\ell^\infty(\R)$ in the sense of the convolution theorem 3.11.2 in \cite{vdvw96}. 
Since the distribution of a Gaussian process $\mathfrak G$ in $\ell^\infty(\R)$ is determined by the covariance structure $\text{Cov}(\mathfrak G(x),\mathfrak G(y))$, $x,y\in \mathbb R$, we need to find the Riesz-representer for pointwise evaluations $b^*_x\circ A'\colon G\to\R$, where $b^*_x\colon \ell^\infty(\R)\to\R$, $f\mapsto f(x)$, for any $x\in \R$. 
Stated differently, we need to find the Cram\'er--Rao lower bound for pointwise estimation of $\rho_b(x)$, $x\in\R$. 
Speaking about these one-dimensional targets in $\R$ such as point evaluations $\rho_b(x)$ or linear functionals of the invariant density, we refer to \emph{semiparametric Cram\'er--Rao lower bounds} as the variance of the optimal limiting distribution from the convolution theorem. 
This last quantity is a lower bound for the variance of any limiting distribution of a regular estimator.

Our first step towards this goal is to look at integral functionals which we will use to approximate the pointwise evaluations.
For any continuous, linear functional $b^*\colon \ell^\infty(\R)\to \R$, we can infer from \eqref{differentiability} that 
\begin{equation*}
\sqrt t(b^*(\Psi(\P_{t,h}))-b^*(\Psi(\P_b)))\ \longrightarrow_{t\to\infty}\ b^*(A'h)\quad\text{in }\R,\text{ for all }h\in G.
\end{equation*}
Considering $\Phi_g\colon\ell^\infty(\R)\to \R$, $f\mapsto \int g(x)f(x)\d x$, for a function $g\in C_c^\infty(\R)$, and letting $\Phi_g(\P_{t,h}):=\Phi_g(\rho_{b+t^{-1/2}h})$, this becomes
\[
\sqrt t(\Phi_g(\P_{t,h})-\Phi_g(\P_b))\ \longrightarrow_{t\to\infty}\ \int g(x)(A'h)(x)\d x.
\]
The limit defines a continuous, linear map $\kappa\colon(G,\l^2(\mu_b))\to\R$ with representation
\begin{align}\label{rieszrepresenterableitungintegralfunktional}\nonumber
\kappa(h)&=\ \int g(x)(A'h)(x)\d x \ =\ \int 2g(x)(H(x)-\mu_b(H))\rho_b(x)\d x\\
&=\ 2\langle g_c,H_c\rangle_{\mu_b}\ =\ 2\langle \L_b\L_b^{-1}g_c,H_c\rangle_{\mu_b} \ =\ -  \langle \partial \L_b^{-1}g_c , h\rangle_{\mu_b}.
\end{align}
Here and throughout the sequel, $\L_b$ denotes the generator of the diffusion process $\X$ with drift coefficient $b$, i.e.,
 $\L_b f=b\partial f + \partial^2 f/2$, for any $f\in C_c^\infty(\R)$, and $f_c:=f-\mu_b(f)$ denotes the centered version of $f$, for any function $f\in \l^1(\mu_b)$.
Note that $g_c\in\text{Rg}(\L_b)$ due to the following lemma whose proof is deferred to Section \ref{appeff}.


\begin{lemma}\label{lemma2} 
Let $g\in C_c^\infty(\R)$, and set 
\[
h(z,x):=\frac{\mathds{1}\{z\geq  x\}-F_b(z)}{\rho_b(z)},\quad z,x\in\R.
\]
Then, $g_c$ is contained in the image of the generator $\L_b$, and 
\[\L_b^{-1}(g_c)\ =\ \T(z)\ :=\ \int_{0}^z \int 2g(x)\rho_b(x)h(u,x)\d x\d u.\]
In particular,
\[
\iint g(x)H(x,y)g(y)\d y \d x\ =\  \|\partial \L_b^{-1}(g_c)\|_{\l^2(\mu_b)}^2,
\]
where $H(x,y):=\E[\H(x)\H(y)]$, $x,y\in\R,$ for the Gaussian process $\H$ fulfilling \eqref{cov:H}.
\end{lemma}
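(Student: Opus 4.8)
The plan is to verify directly that $\T$ as defined solves $\L_b \T = g_c$, and then identify the resulting $\l^2(\mu_b)$-norm with the integral of the covariance kernel $H$. First I would compute $\partial\T$ and $\partial^2\T$ from the definition. Since $\T(z) = \int_0^z \psi(u)\d u$ with $\psi(u) := \int 2g(x)\rho_b(x) h(u,x)\d x$, we have $\partial\T = \psi$, and using $h(u,x) = (\mathds{1}\{u\geq x\} - F_b(u))/\rho_b(u)$ together with $\rho_b' = 2b\rho_b$ (recall $\sigma\equiv 1$, so from \eqref{eq:invdens} we get $\rho_b'/\rho_b = 2b$), one differentiates $\psi$ to obtain $\partial^2\T$. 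The key computation is that $\partial_u h(u,x) = -2b(u)h(u,x) + (\delta_x(u) - \rho_b(u))/\rho_b(u)$ in the distributional sense coming from $\partial_u \mathds{1}\{u\geq x\} = \delta_x(u)$ and $F_b' = \rho_b$; plugging into $\L_b\T = b\,\partial\T + \tfrac12\partial^2\T$ the terms involving $b$ cancel against the $-2b(u)h$ contribution, and what remains is $\tfrac12 \cdot 2\big(g(z)\rho_b(z)/\rho_b(z) - \mu_b(g)\big) = g(z) - \mu_b(g) = g_c(z)$. This shows $g_c \in \mathrm{Rg}(\L_b)$ and $\L_b^{-1}(g_c) = \T$ (uniqueness up to constants is fixed by the normalisation $\T(0)=0$; centering is handled since $\L_b^{-1}$ acts on centered functions and $\partial\L_b^{-1}$ only sees derivatives).

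Next I would establish the norm identity. By \eqref{rieszrepresenterableitungintegralfunktional} the Riesz-representer of $\kappa = \Phi_g \circ A'$ is $-\partial\L_b^{-1}(g_c) = -\partial\T = -\psi$, so $\|\partial\L_b^{-1}(g_c)\|_{\l^2(\mu_b)}^2 = \int \psi(u)^2 \rho_b(u)\d u$. On the other hand, from the covariance structure \eqref{cov:H},
\begin{align*}
\iint g(x) H(x,y) g(y)\,\d y\,\d x &= 4m(\R)\int\!\!\int\!\!\int g(x)\rho_b(x)g(y)\rho_b(y)(\mathds{1}\{[x,\infty)\}-F_b)(s)(\mathds{1}\{[y,\infty)\}-F_b)(s)\,\d s\,\d x\,\d y.
\end{align*}
Using that the scale function has derivative $s'(u) = 1/\rho_b(u)$ up to the normalising constant and $m(\R)$ the speed-measure total mass (with $\d s = s'(u)\d u$), the inner $s$-integral becomes $\int \big(\mathds{1}\{u\geq x\} - F_b(u)\big)\big(\mathds{1}\{u\geq y\} - F_b(u)\big)\rho_b(u)^{-1}\d u$ after absorbing constants, so that swapping the order of integration yields exactly $\int \big(\int 2g(x)\rho_b(x)h(u,x)\d x\big)^2\rho_b(u)\d u = \int\psi(u)^2\rho_b(u)\d u$, i.e.\ $\|\partial\T\|_{\l^2(\mu_b)}^2$. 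The constants must be tracked carefully here: the factor $4m(\R)$ in \eqref{cov:H}, the square of the $2$ in the definition of $h$ (which supplies $4$), and the relation between $\rho_b$, $s'$ and $m$ must all combine correctly, and I would double-check this using $\mu_b = m/m(\R)$ and $\rho_b = m'/m(\R)$ so that $\rho_b(u)^{-1}\d u$ is proportional to $\d s(u)$.

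The main obstacle I anticipate is the rigorous handling of the distributional derivative $\partial_u \mathds{1}\{u\geq x\} = \delta_x(u)$ inside the integral defining $\psi$: one must justify differentiating under the integral sign when the integrand has a jump, which is cleanly done by writing $\psi(u) = 2\int_{-\infty}^u g(x)\rho_b(x)\d x \cdot \rho_b(u)^{-1} - 2\mu_b(g) F_b(u)\rho_b(u)^{-1}$ — splitting the indicator into an honest integral in $x$ — and then differentiating this explicit expression, which is smooth in $u$ off the (Lebesgue-null) support boundary and absolutely continuous throughout. The second, more bookkeeping-heavy obstacle is matching all multiplicative constants between the probabilistic covariance formula \eqref{cov:H} and the analytic expression $\|\partial\L_b^{-1}g_c\|_{\l^2(\mu_b)}^2$; this is routine but error-prone, and I would verify it by specializing to a simple case (e.g.\ the Ornstein–Uhlenbeck process) as a sanity check.
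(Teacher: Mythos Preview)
Your approach is essentially the paper's: compute $\T'$ and $\T''$ explicitly, verify $\L_b\T = g_c$, and expand the covariance integral to recognise $\int\psi^2\,\d\mu_b$. Your handling of the indicator's jump by rewriting $\psi(u)$ as $2\rho_b(u)^{-1}\big[(1-F_b(u))\int_{-\infty}^u g\rho_b - F_b(u)\int_u^\infty g\rho_b\big]$ is exactly the paper's explicit formula for $\T'$, and the constant-tracking for the covariance identity is done in the paper just as you outline (using $\d s(u) = (m(\R)\rho_b(u))^{-1}\d u$).

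There is one step the paper includes that your plan does not mention. Verifying the ODE $b\T' + \tfrac12\T'' = g_c$ pointwise is not the same as showing $\T\in\mathcal D(\L_b)$ when $\L_b$ is the generator viewed as a closed operator on $L^2(\mu_b)$: $\T$ is not compactly supported, and the generator's domain is not simply ``twice differentiable functions''. The paper closes this gap by noting that $\T,\T',\T''$ have at most linear growth, approximating $\T$ by a sequence $\T_n\in C_c^\infty$ with $\partial^k\T_n\to\partial^k\T$ in $L^4(\mu_b)$ for $k=0,1,2$, so that $\L_b\T_n\to g_c$ in $L^2(\mu_b)$ (the linear growth of $b$ is what forces $L^4$ here), and then invoking closedness of $\L_b$ to conclude $\T\in\mathcal D(\L_b)$ with $\L_b\T = g_c$. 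You should add this approximation/closedness argument to make the claim ``$g_c$ is in the image of the generator'' rigorous rather than merely formal.
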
 
We conclude by means of Theorem 3.11.2 in \cite{vdvw96} that the \emph{Cram\'er--Rao lower bound for estimation of $\Phi_g(\P_b)$} is given by
$\|\partial \L_b^{-1}g_c\|^2_{\l^2(\mu_b)}$.
Using an approximation procedure, it then can be shown that the \emph{Cram\'er--Rao lower bound for pointwise estimation of $\rho_b(y)$} is defined via
\begin{equation}\label{CRLBpointwise}
\CR(y):=\| 2\rho_b(y)h(\cdot,y)\|^2_{\l^2(\mu_b)},\quad \text{for any }y\in\R.
\end{equation}
For details, see Proposition \ref{prop:crpoint} in Section \ref{appeff}.
The same arguments apply to estimation of linear combinations $u\rho_b(x) + v\rho_b(y), u,v,x,y \in \R$, and the corresponding Cram\'er--Rao bound reads $\| 2u\rho_b(x)h(\cdot,x) + 2v\rho_b(y)h(\cdot,y)\|^2_{\l^2(\mu_b)}$. 
It follows that the covariance of the optimal Gaussian process in the convolution theorem is given as
\begin{equation}\label{optimalcovariance}
\CR(x,y):= 4\rho_b(x)\rho_b(y)\int h(z,x)h(z,y)\rho_b(z)\d z, \quad x,y\in\R.
\end{equation}

\subsection{Semiparametric efficiency of the kernel density estimator}
Having characterised the optimal limit distribution in the previous section, it is natural to ask in a next step for an efficient estimator of linear functionals of the invariant density such as pointwise estimation, functionals of the form  $\Phi_g(\P_b):=\mu_b(g)=\int g d\mu_b$ or, even more, for estimation of $\rho_b$ in $\ell^\infty(\R)$.

\begin{definition}\label{def:eff}
 An estimator $\hat\rho_t$ of the invariant density is called \emph{asymptotically efficient} in $\ell^\infty(\R)$ if the estimator is regular, i.e., 
\[\sqrt t \left(\hat \rho_t -  \Psi(\P_{t,h})\right)\ \stackrel{\P_{t,h}}{\Longrightarrow}\ \mathscr L,\quad \text{ as } t\to\infty, \text{ for any } h\in G,\]
for a fixed, tight Borel probability measure $\mathscr L$ in $\ell^\infty(\R)$, 
and if $\mathscr L$ is the law of a centered Gaussian process $\mathfrak G$ with covariance structure $\E\left[\mathfrak G(x)\mathfrak G(y)\right]=\CR(x,y)$ as specified in \eqref{optimalcovariance}, i.e, $\hat\rho_t$ achieves the optimal limiting distribution.
\end{definition}

Given an asymptotically efficient estimator $\hat \rho_t$ in $\ell^\infty(\R)$ and any bounded, linear functional $b^{*}\colon\ell^\infty(\R)\to\R$, efficiency of the estimator $b^{*}(\hat \rho_t)$ for estimation of $b^{*}(\rho_b)$ then immediately follows.
Our next result shows that estimation via $\rho_{t,K}(h)$ for the universal bandwidth choice $h\sim t^{-1/2}$ is suitable for the job. 
Its proof is given in Section \ref{appeff}.
 
 
\begin{theorem}\label{theo:eff}
The invariant density estimator $\rho_{t,K}(t^{-1/2})$ defined according to \eqref{est:dens} is an asymptotically efficient estimator in $\ell^\infty(\R)$.
\end{theorem}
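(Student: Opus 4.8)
The plan is to verify the two requirements of Definition \ref{def:eff} for $\hat\rho_t := \rho_{t,K}(t^{-1/2})$: first, that it is a regular estimator in $\ell^\infty(\R)$, and second, that its limiting law is the centered Gaussian process with the optimal covariance $\CR(x,y)$ from \eqref{optimalcovariance}. The second point is essentially bookkeeping once the Donsker theorem is in hand: by Proposition \ref{donskerdensity} (applicable since $h=t^{-1/2}$ satisfies $\sqrt t h^{\beta+1}=t^{(-\beta)/2}\to 0$ and $t^{-1}\le t^{-1/2}<1$), we have $\sqrt t(\rho_{t,K}(t^{-1/2})-\rho_b)\Rightarrow \H$ under $\P_b$, where $\H$ is centered Gaussian with covariance \eqref{cov:H}. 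One then has to identify $\E[\H(x)\H(y)]$ with $\CR(x,y)$. This is a matter of rewriting the speed-measure/scale-function expression: since $\sigma\equiv 1$, the scale density is $s'(x)=1/(C\rho_b(x))$ up to constants and the speed density is $m'(x)=C\rho_b(x)$, so $\d s(x) = c\,\d x/\rho_b(x)^2 \cdot \rho_b(x)^{\,?}$... more carefully, $4m(\R)\rho_b(x)\rho_b(y)\int_\R(\mathds 1\{[x,\infty)\}-F_b)(z)(\mathds 1\{[y,\infty)\}-F_b)(z)\,\d s(z)$ should reduce, after substituting the explicit forms of $m$ and $s$, to $4\rho_b(x)\rho_b(y)\int h(z,x)h(z,y)\rho_b(z)\,\d z$ with $h(z,x)=(\mathds 1\{z\ge x\}-F_b(z))/\rho_b(z)$; indeed $(\mathds 1\{[x,\infty)\}-F_b)(z)=\rho_b(z)h(z,x)$ and the normalization $m(\R)$ absorbs into $\d s$ so that $m(\R)\,\d s(z)=\rho_b(z)\,\d z$. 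I would spell this identification out as a short lemma or inline computation, citing that $m$ and $s$ are the classical speed measure and scale function of the diffusion in Definition \ref{def:B} with $\sigma\equiv1$.

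For regularity, the point is to upgrade the weak convergence under $\P_b$ to weak convergence under the perturbed laws $\P_{t,h}$ with the \emph{same} limit shifted by the derivative $A'h$ of $\Psi$. The clean way is to invoke Le Cam's third lemma: the model $\{\P_{t,h}\}$ is asymptotically normal (established in the excerpt via Girsanov, $\log(\d\P_{t,h}/\d\P_b)=\Delta_{t,h}-\tfrac12\|h\|_{L^2(\mu_b)}^2+o_{\P_b}(1)$ with $\Delta_{t,h}\Rightarrow \NN(0,\|h\|^2_{L^2(\mu_b)})$), and $\Psi(\P_{t,h})=\rho_{b+t^{-1/2}h}$ is differentiable with derivative $A'$ by \eqref{differentiability} (Lemma \ref{lemmadifferentiability}). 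So it suffices to check the joint convergence under $\P_b$ of $\big(\sqrt t(\rho_{t,K}(t^{-1/2})-\rho_b),\ \Delta_{t,h}\big)$ to a jointly Gaussian limit $(\H,\Delta_h)$, and to compute the covariance $\E[\H(x)\Delta_h]$; Le Cam's third lemma then yields that under $\P_{t,h}$, $\sqrt t(\rho_{t,K}(t^{-1/2})-\rho_b)\Rightarrow \H + \E[\H(\cdot)\Delta_h]$, and one must verify that this shift equals $A'h=2\rho_b(H-\mu_b(H))$ with $H(\cdot)=\int_0^\cdot h$. Combined with $\sqrt t(\rho_{b+t^{-1/2}h}-\rho_b)\to A'h$, this gives $\sqrt t(\rho_{t,K}(t^{-1/2})-\rho_{b+t^{-1/2}h})\Rightarrow \H$ under $\P_{t,h}$, i.e.\ regularity with the same limiting law $\mathscr L=\LL(\H)$ for every $h$.

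The joint convergence and covariance computation is where the real work sits. Because $\sqrt t(\rho_{t,K}(t^{-1/2})-t^{-1}L_t^\bullet(X))=o_{\P_b}(1)$ by Proposition \ref{theo:cath1} (as used in the proof of Proposition \ref{donskerdensity}), I may replace the kernel estimator by the local time estimator throughout the covariance analysis. Then $\sqrt t(t^{-1}L_t^x(X)-\rho_b(x))$ and $\Delta_{t,h}=t^{-1/2}\int_0^t h(X_s)\,\d W_s$ are both (asymptotically) continuous-time martingale-type functionals, and their joint Gaussianity follows from a martingale CLT; alternatively one cites the joint Donsker result in \cite{vdvvz05} together with the CLT for $\Delta_{t,h}$. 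The covariance $\E[\H(x)\Delta_h]$ is computed from the martingale representation of local time (Tanaka/occupation-times formula gives $L_t^x(X)=$ a bounded-variation term plus $2\int_0^t \mathds 1\{X_s\ge x\}\,\d W_s$ modulo lower order, so that $t^{-1/2}(L_t^x - \E L_t^x)\approx 2t^{-1/2}\int_0^t(\mathds 1\{X_s\ge x\}-F_b(X_s)\text{-type centering})\,\d W_s$) — the cross bracket with $t^{-1/2}\int_0^t h(X_s)\d W_s$ converges by ergodicity to $2\int(\mathds 1\{z\ge x\}-F_b(z))\,h(z)\,\rho_b(z)\,\d z$, and an integration by parts turns this into $2\int \rho_b(z)h(z)\,\mathds 1\{z< ?\}\ldots = 2(H(x)-\mu_b(H))\rho_b(x)$ after the dust settles (this is precisely the content linking $h(z,x)$ to $A'$, already previewed in Lemma \ref{lemma2} and \eqref{rieszrepresenterableitungintegralfunktional}). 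The main obstacle, then, is getting this cross-covariance computation exactly right — in particular tracking the factor of $2$, the centering, and the integration by parts that identifies the Le Cam shift with $A'h$ — and making the martingale CLT rigorous uniformly enough to get convergence in $\ell^\infty(\R)$ rather than just finite-dimensionally; for the latter I would lean on tightness already established in Proposition \ref{donskerdensity} (the limit $\H$ lives in $\ell^\infty(\R)$), so only the finite-dimensional joint law with $\Delta_{t,h}$ needs new input. Finally, once regularity and the optimal covariance are both confirmed, Definition \ref{def:eff} is satisfied and the theorem follows; I would also remark that efficiency of $b^*(\rho_{t,K}(t^{-1/2}))$ for every bounded linear $b^*$ is then immediate, recovering in particular the LAM-efficiency of the empirical moment estimators discussed after Proposition \ref{donskerdensity}.
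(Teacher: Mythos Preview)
Your proposal is essentially correct and follows the same route as the paper: identify $\E[\H(x)\H(y)]$ with $\CR(x,y)$, replace the kernel estimator by local time via Proposition~\ref{theo:cath1}, use the martingale representation of centred local time to compute the joint limit with $\Delta_{t,h}$, and apply Le Cam's third lemma so that the shift cancels against $A'h$ from \eqref{differentiability}. The paper makes the martingale representation precise via Proposition~1.11 in \cite{kut04} (yielding the integrand $2\rho_b(y)(\mathds 1\{X_u>y\}-F_b(X_u))/\rho_b(X_u)$), which saves you the hand-waving around Tanaka.

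One point you gloss over: tightness of $\sqrt t(\rho_{t,K}(t^{-1/2})-\rho_b)$ under $\P_b$ does \emph{not} by itself give tightness under $\P_{t,h}$; the paper closes this via a contiguity argument (as in \cite[Theorem~11.14]{kosorok2008}), bounding $\P_{t,h}$-probabilities of non-compact sets by $M\cdot\P_b(\cdot)+\P_{t,h}(\d\P_{t,h}/\d\P_b>M)$. This is standard but should be stated, since regularity in $\ell^\infty(\R)$ requires weak convergence under every $\P_{t,h}$.
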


\begin{remark} 
\begin{enumerate}
\item[(a)]
From the proof of Theorem \ref{theo:eff}, it can be inferred that the local time estimator $t^{-1}L^\bullet_t(X)$ is an asymptotically efficient estimator, as well.
\item[(b)]
In terms of earlier research on efficient estimation of the density as a function in $\ell^\infty(\R)$, we shall mention \cite{kutoyants98} and \cite{negri01}. 
The works deal with the efficiency of the local time estimator $t^{-1}L^\bullet_t(X)$ in the LAM sense for certain classes of loss functions. 
Subject of \cite[Section 8]{kutoyants98} are $L^2(\nu)$ risks for some finite measure $\nu$ on $\R$ of the form $t\E_b\int |\tilde \rho_t(x)-\rho_b(x)|^2 \nu(\d x)$, for estimators $\tilde\rho_t$ of $\rho_b$, whereas \citeauthor{negri01} complements this work for risks of the form $\E_b\left[g\left(\sqrt t \|\tilde\rho_t-\rho_b\|_\infty\right)\right]$ for a class of bounded, positive functions $g$. 
Of course, the distribution appearing in the lower bound corresponds to the optimal distribution in the sense of the convolution theorem that we derived. 
The derivation in \cite{kutoyants98} of the lower bound is based on the van Trees inequality as established in \cite{gilllevit95} as an alternative to the classical approach relying on H\'ajek--Le Cam theory. 
On the other hand, \citeauthor{negri01}'s method follows \cite{millar83} and makes use of the idea of convergence of experiments originally provided by Le Cam. 
The optimal distribution in the sense of a convolution theorem is not shown neither is any asymptotic efficiency result in $\ell^\infty(\R)$ for the kernel density estimator. 
\end{enumerate}
\end{remark}

\section{Minimax optimal adaptive drift estimation wrt $\sup$-norm risk}\label{sec:adapt_einfach}
We now turn to the original question of estimating the drift coefficient in a completely data-driven way. 
The aim of this section is to suggest a scheme for rate-optimal choice of the bandwidth $h$, based on a continuous record of observations $X^t\equiv (X_s)_{0\leq s\leq t}$ of a diffusion as introduced in Definition \ref{def:B}, optimality considered in terms of the $\sup$-norm risk.
Since we stick to the continuous framework, our previous concentration results are directly applicable, allowing, e.g., for the straightforward derivation of upper bounds on the variance of the estimator $\overline\rho_{t,K}(h)$ of the order
\begin{equation}\label{def:ovsigma}
\overline\sigma^2(h,t):=\frac{(\log(t/h))^3}{t}+\frac{\log(t/h)}{th}.
\end{equation}
Standard arguments provide for any $b\in\Sigma(\beta,\LL)$ bounds on the associated bias of order $B(h)\lesssim h^\beta$.
In case of known smoothness $\beta$, one can then easily derive the optimal bandwidth choice $h^*_t$ by balancing the components of the bias-variance decomposition 
\[\sup_{b\in\Sigma(\beta,\LL)}\E_b\left[\Big\|\overline\rho_{t,K}(h)-\frac{\rho_b'}{2}\Big\|_\infty\right] \leq B(h) + \K\overline\sigma(h,t),\]
resulting in $h_t^\ast\sim(\log t/t)^{\frac{1}{2\beta + 1}}$.
In order to remove the (typically unknown) order of smoothness $\beta$ from the bandwidth choice, we need to find a data-driven substitute for the upper bound on the bias in the balancing process. 
Heuristically, this is the idea behind the Lepski-type selection procedure suggested in \eqref{est:band0} below.

\paragraph{1.)}
Specify the discrete grid of candidate bandwidths
\begin{equation}\label{def:cand}
\HH \equiv\HH_t\ :=\ \left\{h_k=\eta^{-k}\colon\ k\in \N,\ \eta^{-k}>\frac{(\log t)^{2}}{t}\right\},\quad \eta>1\text{ arbitrary},
\end{equation}
and define $\overline\eta_1:=24 \tilde C_2\|K\|_{L^2(\lebesgue)}\bdg\e\sqrt v$, $\overline\eta_2:=12\bdg\|K\|_{L^2(\lebesgue)}$ and
\begin{equation}\label{tilm}
\tilde M = \tilde M_t\ :=\ C\|\rho_{t,K}(t^{-1/2})\|_\infty,\qquad \text{ for } C=C(K)\ :=\ 20\e^2\left(4\overline\eta_1+2\overline\eta_2\right)^2.
\end{equation}
\paragraph{2.)}
Set
\begin{equation}\label{est:band0}
\tri h_t\ :=\ \max\Bigg\{h\in\HH\colon \|\overline\rho_{t,K}(h)-\overline\rho_{t,K}(g)\|_\infty\leq \sqrt{\tilde M}\overline\sigma(g,t)
\,\,\forall g<h,\ g\in\HH\Bigg\}.
\end{equation}

The constants involved in the definition of $\overline\eta_1$ and $\overline\eta_2$ are explained in Remark \ref{rem:const} in Section \ref{app:a1}.
For the proposed data-driven scheme for bandwidth choice, we obtain the subsequent

\begin{theorem}\label{theo:est}
For $b\in\Sigma(\beta,\LL)$ as introduced in Definition \ref{def:Sigma}, consider the SDE \eqref{SDE}.
Given some kernel $K$ satisfying \eqref{kernel}, define the estimators $b_{t,K}(\tri h_t)$ and $\overline\rho_{t,K}(\tri h_t)$ according to \eqref{est:drift}, \eqref{est:derivative} and \eqref{est:band0}.
Then, for any $0<\beta+1\leq \alpha$,
\begin{align}\nonumber
\sup_{b\in\Sigma(\beta,\LL)}\E_b\left[\Big\|\overline\rho_{t,K}(\tri h_t)-\frac{\rho_b'}{2}\Big\|_\infty\right] &\lesssim\ \left(\frac{\log t}{t}\right)^{\frac{\beta}{2\beta+1}},\\\label{21}
\sup_{b\in\Sigma(\beta,\LL)}\E_b\left[\left\|(b_{t,K}(\tri h_t)-b)\cdot \rho_b^2\right\|_\infty\right]&\lesssim\ \left(\frac{\log t}{t}\right)^{\frac{\beta}{2\beta+1}}.
\end{align}
\end{theorem}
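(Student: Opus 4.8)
The plan is to follow the classical Lepski-type analysis as in \cite{gini09}, splitting into the usual two regimes according to whether the oracle bandwidth $h_t^\ast\sim(\log t/t)^{1/(2\beta+1)}$ is selected or not. First I would establish a high-probability bound for the stochastic deviation: define the event
\[
\mathcal A_t\ :=\ \bigcap_{h\in\HH}\left\{\|\overline\rho_{t,K}(h)-\E_b[\overline\rho_{t,K}(h)]\|_\infty\ \leq\ c_0\,\overline\sigma(h,t)\right\}
\]
for a suitable constant $c_0$, and use the concentration inequality \eqref{con_stoch_int} from Proposition \ref{prop:csi} together with a union bound over the (at most $O(\log t)$ many) candidate bandwidths in $\HH$ to show $\P_b(\mathcal A_t^{\operatorname{c}})\lesssim t^{-2}$ uniformly over $b\in\Sigma(\beta,\LL)$; the polynomial decay is enough since the $\sup$-norm error of $\overline\rho_{t,K}(h)$ is at most polynomially large on $\mathcal A_t^{\operatorname{c}}$ (crude deterministic bounds on $\|K\|_\infty/(th)$ and the drift growth suffice). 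Simultaneously I would control $\tilde M_t = C\|\rho_{t,K}(t^{-1/2})\|_\infty$ by showing that, on an event of probability $1-O(t^{-2})$, $\tilde M_t$ lies between two deterministic constants (bounded above and below away from zero) — this follows from Proposition \ref{prop:con_kernel_density_estimator} with $h=t^{-1/2}$, giving $\|\rho_{t,K}(t^{-1/2})-\rho_b\|_\infty=O_{\P_b}(\sqrt{\log t/t})$, and from the uniform positivity/boundedness of $\rho_b$ over the class (a consequence of Definition \ref{def:B}). Thus $\sqrt{\tilde M_t}\,\overline\sigma(g,t)$ is, up to constants, a genuine bound on twice the stochastic fluctuation.

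Next comes the deterministic bias–variance bookkeeping. Standard kernel arguments (Taylor expansion, $K$ of order $\lfloor\alpha\rfloor\geq\beta+1$, and $\rho_b\in\mathcal H_\R(\beta+1,\LL)$ so that $\rho_b'\in\mathcal H_\R(\beta,\LL)$) give $\|\E_b[\overline\rho_{t,K}(h)]-\rho_b'/2\|_\infty\leq B(h)\lesssim h^\beta$ uniformly over $\Sigma(\beta,\LL)$. On the event $\mathcal A_t$ I would run the usual Lepski argument: (i) the oracle bandwidth $h^\ast$ (the largest $h\in\HH$ with $B(h)\lesssim\overline\sigma(h,t)$) satisfies the defining inequalities in \eqref{est:band0} with high probability, hence $\tri h_t\geq h^\ast$ up to a grid factor $\eta$; (ii) if $\tri h_t > h^\ast$, then testing the definition of $\tri h_t$ against $g=h^\ast$ and combining with the triangle inequality $\|\overline\rho_{t,K}(\tri h_t)-\rho_b'/2\|_\infty\leq\|\overline\rho_{t,K}(\tri h_t)-\overline\rho_{t,K}(h^\ast)\|_\infty + \|\overline\rho_{t,K}(h^\ast)-\rho_b'/2\|_\infty$ yields the bound $\lesssim\sqrt{\tilde M_t}\,\overline\sigma(h^\ast,t)+B(h^\ast)+\overline\sigma(h^\ast,t)\lesssim(\log t/t)^{\beta/(2\beta+1)}$, since at the oracle bandwidth $\overline\sigma(h^\ast,t)\asymp B(h^\ast)\asymp(\log t/t)^{\beta/(2\beta+1)}$ (the dominant term of $\overline\sigma$ in \eqref{def:ovsigma} being $\sqrt{\log(t/h)/(th)}$ in this range). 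Taking expectations, splitting over $\mathcal A_t$ and its complement (where the polynomial bound times $t^{-2}$ is negligible), gives the first display in \eqref{21}.

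For the second display I would transfer the rate for $\overline\rho_{t,K}(\tri h_t)$ to the Nadaraya–Watson estimator. Write, using the definition \eqref{est:drift},
\[
(b_{t,K}(\tri h_t)(x)-b(x))\rho_b(x)
\ =\ \frac{\overline\rho_{t,K}(\tri h_t)(x)-b(x)\big(\rho_{t,K}(t^{-1/2})(x)+r_t\big)}{\rho_{t,K}(t^{-1/2})(x)+r_t}\,\rho_b(x),
\]
where $r_t:=\sqrt{\log t/t}\exp(\sqrt{\log t})$, and then multiply by another factor $\rho_b$. The numerator splits into $\overline\rho_{t,K}(\tri h_t)-\rho_b'/2$ (controlled by the first bound), plus $\rho_b'/2 - b\rho_b = -b\rho_b$... more precisely $b\rho_b=(\rho_b\sigma^2)'/2=\rho_b'/2$ since $\sigma\equiv1$, so the "ideal" numerator $\rho_b'/2 - b\rho_b$ vanishes and one is left with $b(x)(\rho_b(x)-\rho_{t,K}(t^{-1/2})(x)-r_t)$, whose $\rho_b$-weighted $\sup$-norm is handled by Proposition \ref{prop:con_kernel_density_estimator} (the factor $b(x)\rho_b(x)^2$ stays bounded because $b$ grows at most linearly while $\rho_b$ decays exponentially, by Definition \ref{def:B}). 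The main obstacle — and the reason for the additive term $r_t$ — is the denominator: on the tails $\rho_{t,K}(t^{-1/2})(x)$ can be much smaller than $\rho_b(x)$, so one needs a lower bound of the form $\rho_{t,K}(t^{-1/2})(x)+r_t\gtrsim\rho_b(x)$ (or at least $\gtrsim$ something not too small relative to the weight $\rho_b(x)$) uniformly in $x$, with high probability. I expect this tail control — showing that the extra term $r_t$ indeed "prevents the denominator from becoming small too fast in the tails" while not inflating the rate, which requires carefully matching the exponential tail decay of $\rho_b$ against the deviation bound $\psi_{t,h}$ in Proposition \ref{prop:con_kernel_density_estimator} — to be the technically delicate step; once it is in place, the two weighted estimates combine via the triangle inequality to give $\sup_b\E_b[\|(b_{t,K}(\tri h_t)-b)\rho_b^2\|_\infty]\lesssim(\log t/t)^{\beta/(2\beta+1)}$, completing the proof.
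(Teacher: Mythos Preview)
Your approach is essentially the same as the paper's: a Lepski-type analysis for $\overline\rho_{t,K}$ following \cite{gini09}, controlling the data-driven threshold $\tilde M$ via Proposition~\ref{prop:con_kernel_density_estimator}, and then transferring to the ratio estimator by splitting off the numerator and denominator errors. The paper packages the second step into a separate lemma (conditions (E1)--(E3) on $\rho_t$ and $\overline\rho_t$), but the logic is what you outline.

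One correction on the denominator step, which you flag as the delicate point: no ``matching of the exponential tail decay of $\rho_b$ against $\psi_{t,h}$'' is needed. The argument is simpler than you expect. On the event $B_t:=\{\|\rho_{t,K}(t^{-1/2})-\rho_b\|_\infty\le r_t\}$ one has, for \emph{every} $x$,
\[
\rho_{t,K}(t^{-1/2})(x)+r_t\ \ge\ \rho_b(x)-r_t+r_t\ =\ \rho_b(x),
\]
so $\rho_b/\rho_t^\ast\le 1$ uniformly, and both weighted error terms reduce directly to $\|\overline\rho_{t,K}(\tri h_t)-\rho_b'/2\|_\infty$ and $\|\rho_{t,K}(t^{-1/2})-\rho_b\|_\infty+r_t$. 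Since $r_t=\sqrt{\log t/t}\,\e^{\sqrt{\log t}}=o\big((\log t/t)^{\beta/(2\beta+1)}\big)$ for every $\beta>0$, the rate is not inflated. The only nontrivial point is making $\P_b(B_t^{\mathrm c})$ small enough to kill the crude bound $\|\hat b_t\rho_b^2\|_\infty\le \|\rho_b\|_\infty^2\,\|\overline\rho_t\|_\infty/r_t$ on the complement; the paper does this by applying Markov with $p=8\sqrt{\log t}$ in (E1), which gives $\P_b(B_t^{\mathrm c})\lesssim C^{8\sqrt{\log t}}t^{-8}$. So the tail behaviour of $\rho_b$ never enters---the additive $r_t$ and the uniform sup-norm control do all the work.
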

The suggested estimators for $\rho'_b$ and $b$, respectively, are rate optimal as the following lower bounds imply.

\begin{theorem}\label{thm:lowerboundableitung} 
Let $\beta, \mathcal L,\CC, A, \gamma \in (0,\infty)$, and assume that $\Sigma(\beta,\mathcal L/2,\CC/2,A,\gamma)\neq\emptyset$.
Then,
\begin{align}\label{lowerboundableitung}
\liminf_{t \to \infty} \inf_{\tilde{\partial\rho}_t}\sup_{b\in\Sigma(\beta,\mathcal L)}\E_b\left[\left(\frac{\log t}{t}\right)^{-\frac{\beta}{2\beta +1}}\|\tilde{\partial\rho}_t - \rho'_b\|_\infty\right]&>\ 0,\\\label{lowerbounddrift}
\liminf_{t\to\infty}\inf_{\tilde b}\sup_{b\in\Sigma(\beta,\mathcal L)}\E_b\left[\left(\frac{\log t}{t}\right)^{-\frac{\beta}{2\beta + 1}}\left\|(\tilde b - b)\cdot \rho_b^2\right\|_\infty\right]&>\ 0,
\end{align}
where the infimum is taken over all possible estimators $\tilde{\partial \rho}_t$ of $\rho'_b$ and $\tilde b$ of the drift coefficient $b$, respectively.
\end{theorem}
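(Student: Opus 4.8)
The plan is to establish both lower bounds by a standard two-hypothesis (or finitely-many-hypotheses) reduction à la Le Cam, carefully engineering the perturbation so that it lives inside the H\"older class $\Sigma(\beta,\mathcal L)$ while respecting the structural constraints encoded in $\Sigma(\mathcal C,A,\gamma,1)$. Since $\rho_b' = 2b\rho_b$ and the statement about $(\tilde b - b)\rho_b^2$ is, up to the factor $2\rho_b$ weighting $\rho_b'$, essentially the same target, I would prove \eqref{lowerbounddrift} first and then deduce \eqref{lowerboundableitung} (or run the two arguments in parallel from the same construction). First I would fix a reference drift $b_0\in\Sigma(\beta,\mathcal L/2,\mathcal C/2,A,\gamma)$, which exists by assumption, and construct competitors of the form $b_1 = b_0 + \delta_t \psi_{x_t}$, where $\psi$ is a fixed smooth compactly supported bump, $\psi_{x_t}(\cdot) = h_t^{\beta}\psi((\cdot - x_t)/h_t)$ is a rescaled-and-translated copy with $h_t \sim (\log t/t)^{1/(2\beta+1)}$, and $\delta_t$ a constant chosen so that $b_1$ stays in $\Sigma(\beta,\mathcal L)$; here $\|\psi_{x_t}^{(\beta)}\|_\infty \sim 1$ fixes the H\"older constant, and the separation in the relevant weighted $\sup$-norm is of order $h_t^{\beta} \sim (\log t/t)^{\beta/(2\beta+1)}$, which is exactly the claimed rate.

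Next I would control the likelihood ratio between $\P_{b_0}$ and $\P_{b_1}$ on $C(0,t)$. By Girsanov (as already used in Section~\ref{subsec:crinv}), the log-likelihood ratio is $\int_0^t (b_1-b_0)(X_s)\,\d W_s - \tfrac12\int_0^t (b_1-b_0)^2(X_s)\,\d s$, so the Kullback--Leibler divergence is $\tfrac12\E_{b_0}\int_0^t (b_1-b_0)^2(X_s)\,\d s = \tfrac{t}{2}\,\mu_{b_0}\!\big((b_1-b_0)^2\big)(1+o(1))$ by ergodicity. Since $b_1 - b_0 = \delta_t h_t^\beta \psi((\cdot-x_t)/h_t)$ is supported on an interval of length $\sim h_t$ on which $\rho_{b_0}$ is bounded, this KL divergence is of order $t\,\delta_t^2 h_t^{2\beta}\cdot h_t = t\,\delta_t^2 h_t^{2\beta+1}$. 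The logarithmic factor in $h_t$ is chosen precisely so that this is bounded (indeed one can arrange KL $\le \log 2$, or work with finitely many bumps at well-separated locations $x_t^{(1)},\dots,x_t^{(N_t)}$ and use the Fano/Assouad-type variant, where $N_t$ bumps each contribute and the logarithm compensates), which is the classical mechanism producing the $\log t$ in $\sup$-norm minimax rates. Then Le Cam's two-point lemma (or its multi-hypothesis extension) gives $\inf_{\tilde b}\max_{i} \P_{b_i}(\|(\tilde b - b_i)\rho_{b_i}^2\|_\infty \ge c\, r_t) \ge c' > 0$ with $r_t = (\log t/t)^{\beta/(2\beta+1)}$, and Markov's inequality upgrades this to the stated $\liminf$ on the expected weighted risk.

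For \eqref{lowerboundableitung} I would use the identity $\rho_b' = 2b\rho_b$: on the support of the perturbation, $\rho_{b_1}$ and $\rho_{b_0}$ differ only by a lower-order (integrated) amount, so $\|\rho_{b_1}' - \rho_{b_0}'\|_\infty \gtrsim \delta_t h_t^\beta$ as well, and the same two-point argument applies verbatim with $\tilde{\partial\rho}_t$ in place of $\tilde b\cdot(\text{weight})$; alternatively one notes that an estimator of $\rho_b'$ together with a (parametric-rate, hence negligible-error) estimator of $\rho_b$ from Proposition~\ref{prop:con_kernel_density_estimator} yields an estimator of $b$ on the relevant region, so the drift lower bound transfers. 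The main obstacle I anticipate is the bookkeeping needed to keep the perturbed drift genuinely inside $\Sigma(\beta,\mathcal L)$ — this means not only matching the H\"older seminorm (easy, via the scaling $h_t^\beta$) but also checking the sign/dissipativity condition $b(x)\operatorname{sgn}(x)\le -\gamma$ for $|x|>A$ and the linear growth bound, which is why one localizes the bump on a compact set (shrinking with $t$) away from the origin-threshold $A$ where $b_0$ already satisfies the strict inequalities with margin (recall $b_0\in\Sigma(\cdot,\mathcal L/2,\mathcal C/2,\cdot,\cdot)$, so there is room), and simultaneously verifying that $\rho_{b_1}\in\mathcal H_{\mathbb R}(\beta+1,\mathcal L)$, i.e. that perturbing $b$ by a $\beta$-smooth bump of the right size perturbs $\rho_b$ by a $(\beta+1)$-smooth function of controlled seminorm — this follows from \eqref{eq:invdens} since $\rho_b$ is (an exponential of) an antiderivative of $2b$, but the constants must be tracked. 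The ergodic-averaging step $\frac1t\E_{b_0}\int_0^t(b_1-b_0)^2(X_s)\,\d s \to \mu_{b_0}((b_1-b_0)^2)$ must also be made quantitative/uniform enough that it holds along the sequence $t\to\infty$ with $x_t,h_t$ varying; this is where one may prefer the van Trees inequality (as in \cite{gilllevit95,kutoyants98}) or a direct spectral-gap estimate, but the crude bound KL $\lesssim t\,\delta_t^2 h_t^{2\beta+1}$ with an absolute implied constant suffices for the Le Cam argument.
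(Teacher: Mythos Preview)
Your overall strategy—construct localized perturbations, bound the Kullback--Leibler divergence via Girsanov, then invoke a many-hypothesis lower-bound lemma—is the right one and matches the paper in spirit. However, there are two points where your proposal diverges from the paper in ways worth noting, and one of them is a genuine gap.

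\textbf{The two-point argument does not yield the logarithm.} You present the two-hypothesis Le Cam reduction as the primary route and mention the multi-hypothesis (Fano/Assouad) variant only parenthetically. With your choice $h_t\sim(\log t/t)^{1/(2\beta+1)}$, the KL divergence is of order $t\,h_t^{2\beta+1}\sim\log t$, \emph{not} $O(1)$; so ``one can arrange $\mathrm{KL}\le\log 2$'' is false for this $h_t$, and the two-point lemma would force $h_t\sim t^{-1/(2\beta+1)}$, missing the logarithmic factor in the rate. The paper commits from the start to the multi-hypothesis version (Lemma~\ref{lem:tsybakov}, i.e.\ Tsybakov's Theorem~2.7): it places $|J_t|\sim h_t^{-1}$ disjoint bumps at grid points $x_j=2h_tj$ inside $(-A,A)$, so that $\log|J_t|\gtrsim\log t$ absorbs the $\log t$-sized KL divergence. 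You must do the same; the multi-hypothesis route is not optional here.

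\textbf{Perturbing the density versus perturbing the drift.} You perturb $b$ directly and then have to argue that the induced invariant density $\rho_{b_1}$ stays in $\mathcal H_\R(\beta+1,\mathcal L)$. This can be done (via $\rho_{b_1}=C'\rho_{b_0}\exp(2\int\psi_{x_t})$ and tracking H\"older seminorms of products), but the bookkeeping is delicate. The paper instead perturbs the \emph{density}: it sets $\rho_j:=\rho_0+G_j$ with $G_j(x)=\mathcal L h_t^{\beta+1}Q((x-x_j)/h_t)$ and $\int Q=0$, making the H\"older check $\rho_j\in\mathcal H_\R(\beta+1,\mathcal L)$ immediate, and only then defines $b_j:=\rho_j'/(2\rho_j)$. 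The verification that $b_j\in\Sigma(\mathcal C,A,\gamma,1)$ is then a short computation using \eqref{lowerboundrhoj}. This order of construction is cleaner and avoids the issue you flag as ``the main obstacle.''

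\textbf{Linking the two lower bounds.} You propose deriving \eqref{lowerboundableitung} from \eqref{lowerbounddrift} (or vice versa) via $\rho_b'=2b\rho_b$ and the parametric rate for $\rho_b$. The paper instead proves \eqref{lowerboundableitung} directly from the construction above, and for \eqref{lowerbounddrift} uses a separate reduction (Lemma~\ref{lemma:lowdrift}): any drift estimator $\tilde b$ yields an estimator $4\tilde b\,\rho_{t,K}(t^{-1/2})\,\rho_b$ of $(\rho_b^2)'$, and the lower bound for $(\rho_b^2)'$ (Proposition~\ref{thm:lower_bound_rho'rho}, proved with the same hypotheses but a kernel $Q$ additionally satisfying $Q(0)=0$) transfers back. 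Your route is viable, but note that turning an estimator of $\rho_b'$ into one of $b$ on the support of the bump requires a \emph{lower} bound on $\rho_b$ there, which is available since the bumps sit inside $[-A,A]$.

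Finally, a small correction: under stationarity, $\E_{b_0}\!\int_0^t(b_1-b_0)^2(X_s)\,\d s = t\,\mu_{b_0}((b_1-b_0)^2)$ exactly—no ergodic $o(1)$ term—and there is an additional $O(1)$ contribution from the initial condition (the processes are stationary under different invariant laws), which the paper records as $\E_j[\log(\rho_j(X_0)/\rho_0(X_0))]$.
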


The proof of the preceding theorem is based on classical tools from minimax theory as laid down in \cite{tsy09}.
Precisely, it relies on the Kullback version of Theorem 2.7 in \cite{tsy09}, his main theorem on lower bounds. 
This result slightly reformulated in terms of our problem is stated in Lemma \ref{lem:tsybakov}. 

\section{Simultaneous estimation}\label{sec:sim}
The result presented in the previous section is a classical specification of Lepski's procedure which complements the study of one-dimensional drift estimation in the continuous framework.
However, as will be shown in the sequel, our techniques allow for results which go beyond classical issues such as minimax optimality.
We will adjust the bandwidth selection procedure from Section \ref{sec:adapt_einfach} in such a way that the resulting data-driven bandwidth choice yields an asymptotically efficient estimator of the invariant density and, simultaneously, also gives a drift estimator which achieves the best possible convergence rate in $\sup$-norm loss.
The approach is an adaptation of the method developed by \cite{gini09} to the scalar diffusion set-up.

\paragraph{1.)}
Define the set of candidate bandwidths $\HH=\HH_t$ according to \eqref{def:cand}, and introduce $h_{\min}:=\min\left\{h_k\in\HH\colon k\in\N\right\}$. 
Set 
\[
\wideparen M \ :=\ \wideparen M_t\ :=\ C\|\rho_{t,K}(h_{\min)}\|_\infty,
\]
for the constant $C$ defined in \eqref{tilm}.
\paragraph{2.)}
Set
\begin{align}
\begin{split}\label{est:band}
\hat h_t&:= \max\Bigg\{h\in\HH\colon \|\overline\rho_{t,K}(h)-\overline\rho_{t,K}(g)\|_\infty\leq \sqrt{\wideparen M}\overline\sigma(g,t)\ \forall g<h,\ g\in\HH\\
&\hspace*{10em}\text{ \textbf{and} }
\left\|\rho_{t,K}(h)-\rho_{t,K}(h_{\min})\right\|_\infty\leq\frac{\sqrt h(\log(1/h))^4}{\sqrt t\log t}\Bigg\}.
\end{split}
\end{align}

Our goal is to estimate the invariant density $\rho_b$ and the drift coefficient $b$ via
$\rho_{t,K}(\hat h_t)$ and 
\begin{equation}\label{def_sim_drift_est}
\tilde b_{t,K}(\hat h_t)(x)\ :=\ \frac{\overline\rho_{t,K}(\hat h_t)(x)}{\rho_{t,K}(\hat h_{t})(x) + \sqrt{\frac{\log t}{t}}\exp\left(\sqrt{\log t}\right)},
\end{equation}
respectively, by means of the simultaneous, adaptive bandwidth choice $\hat h_t$.

\begin{theorem}\label{theo:sim}
Grant the assumptions of Theorem \ref{theo:est}. 
Given some kernel $K$ satisfying \eqref{kernel}, define the estimators $\rho_{t,K}(\hat h_t)$, $\overline \rho_{t,K}(\hat h_t)$ and $\tilde b_{t,K}(\hat h_t)$ according to \eqref{est:dens}, \eqref{est:derivative}, \eqref{def_sim_drift_est} and \eqref{est:band}. 
Then,
\begin{equation}\label{sim:1}\tag{$\operatorname{\mathbf{I}}$}
\sqrt t\left(\rho_{t,K}(\hat h_t)-\rho_b\right)\ \stackrel{\P_b}{\Longrightarrow}\ \H,\qquad \text{ as } t\to\infty,
\end{equation}
in $\ell^\infty(\R)$.
Furthermore, for any $0<\beta+1\leq \alpha$,
\begin{align}\nonumber
\sup_{b\in\Sigma(\beta,\LL)}\E_b\left[\Big\|\overline \rho_{t,K}(\hat h_t)-\frac{\rho_b'}{2}\Big\|_\infty\right]&\lesssim\ \left(\frac{\log t}{t}\right)^{\frac{\beta}{2\beta+1}},\\\label{sim:21}\tag{$\operatorname{\mathbf{II}}$}
\sup_{b\in\Sigma(\beta,\LL)}\E_b\left[\left\|(\tilde b_{t,K}(\hat h_t)-b)\cdot\rho_b^2\right\|_\infty\right]&\lesssim\ \left(\frac{\log t}{t}\right)^{\frac{\beta}{2\beta+1}}.
\end{align}
\end{theorem}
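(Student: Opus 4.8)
The plan is to combine the two ``halves'' of the selection rule \eqref{est:band} and show that each one separately forces the desired behaviour, very much in the spirit of the i.i.d.\ analysis in \cite{gini09}. For the Donsker assertion \eqref{sim:1}, the key observation is that the second constraint in \eqref{est:band} is precisely a (random) bound of the type appearing in Proposition \ref{theo:cath1}: on the event that $\hat h_t$ is well-defined, $\|\rho_{t,K}(\hat h_t)-\rho_{t,K}(h_{\min})\|_\infty\leq \sqrt{\hat h_t}(\log(1/\hat h_t))^4/(\sqrt t\log t)$, so that $\sqrt t\,\|\rho_{t,K}(\hat h_t)-\rho_{t,K}(h_{\min})\|_\infty\to 0$ in probability as soon as $\hat h_t\to 0$ fast enough. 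Since $h_{\min}\asymp (\log t)^2/t$ satisfies $\sqrt t\,h_{\min}^{\beta+1}\to 0$, Proposition \ref{donskerdensity} applies to $\rho_{t,K}(h_{\min})$, and the triangle inequality together with Slutsky's lemma (in $\ell^\infty(\R)$) yields \eqref{sim:1}. The remaining point for this part is therefore a lower control on $\hat h_t$: I would show that with probability tending to one the oracle-type bandwidth $h^\ast_t\sim(\log t/t)^{1/(2\beta+1)}$ (or something of at least this order) satisfies \emph{both} constraints, so $\hat h_t\geq h^\ast_t$ and in particular $\hat h_t\to 0$ at a polynomial rate; this uses Proposition \ref{prop:csi} for the first constraint and Proposition \ref{prop:con_kernel_density_estimator} for the second, exactly as in the proof of Theorem \ref{theo:est}.

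For the drift and derivative bounds \eqref{sim:21}, I would essentially rerun the proof of Theorem \ref{theo:est} with $\tilde M$ replaced by $\wideparen M$ and with the extra kernel-density constraint present. The first constraint in \eqref{est:band} is identical in form to \eqref{est:band0}, so the Lepski bias--variance trade-off argument goes through verbatim to give $\|\overline\rho_{t,K}(\hat h_t)-\rho_b'/2\|_\infty\lesssim(\log t/t)^{\beta/(2\beta+1)}$ \emph{on the event} that $\hat h_t$ lies in the ``good'' range $[c\,h^\ast_t, C\,h^\ast_t]$; the only new input is that the additional second constraint does not shrink $\hat h_t$ below $h^\ast_t$, which is the content of the probabilistic step described above. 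Note $\wideparen M_t=C\|\rho_{t,K}(h_{\min})\|_\infty$ concentrates around $C\|\rho_b\|_\infty$ by Proposition \ref{prop:con_kernel_density_estimator}, so it plays exactly the same role as $\tilde M_t$ did. The passage from the bound on $\overline\rho_{t,K}(\hat h_t)$ to the bound on $(\tilde b_{t,K}(\hat h_t)-b)\rho_b^2$ is the same algebraic manipulation as in Theorem \ref{theo:est}: write
\[
(\tilde b_{t,K}(\hat h_t)-b)\,\rho_b^2 \;=\; \frac{\rho_b}{\rho_{t,K}(\hat h_t)+r_t}\Bigl(\overline\rho_{t,K}(\hat h_t)-b\rho_b\Bigr)\rho_b \;-\; b\rho_b\cdot\frac{(\rho_{t,K}(\hat h_t)+r_t-\rho_b)\rho_b}{\rho_{t,K}(\hat h_t)+r_t},
\]
with $r_t:=\sqrt{\log t/t}\,\exp(\sqrt{\log t})$, and then control the denominator from below uniformly in $x\in\R$ using the additive regularisation term together with the tail behaviour of $\rho_b$ (this is where the exponential factor $\exp(\sqrt{\log t})$ is used, exactly as in \eqref{est:drift}), the numerator $\overline\rho_{t,K}(\hat h_t)-b\rho_b$ via the just-established rate and the relation $b\rho_b=\rho_b'/2$, and $\rho_{t,K}(\hat h_t)-\rho_b$ via Proposition \ref{prop:con_kernel_density_estimator}. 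On the complementary (small-probability) events, crude bounds on $\|\overline\rho_{t,K}(h)\|_\infty$ and $\|\rho_{t,K}(h)\|_\infty$ uniformly over $h\in\HH$, obtained from Propositions \ref{prop:csi} and \ref{prop:con_kernel_density_estimator} and a union bound over the (at most logarithmically many) candidate bandwidths, show the contribution to the expectation is negligible.

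The main obstacle, as in \cite{gini09}, is the interplay between the two constraints: one has to make sure that imposing the second (density) constraint in \eqref{est:band} does \emph{not} disturb the Lepski balancing governed by the first (derivative) constraint, i.e.\ that the selected $\hat h_t$ still falls in the window where both the bias of $\overline\rho_{t,K}$ is controlled and $\sqrt t\|\rho_{t,K}(\hat h_t)-\rho_{t,K}(h_{\min})\|_\infty\to0$. Concretely, the delicate estimate is a high-probability two-sided sandwich $c\,(\log t/t)^{1/(2\beta+1)}\leq \hat h_t\leq C\,(\log t/t)^{1/(2\beta+1)}$ (up to the logarithmic slack built into the second constraint, which is designed to be asymptotically weaker than the first): the lower bound needs that the true bandwidth value comparable to $h^\ast_t$ passes both tests with overwhelming probability, and the upper bound needs that no genuinely larger bandwidth can pass the first test, which is the usual Lepski argument driven by the lower bound on the bias at scales $\gg h^\ast_t$ contained in $\Sigma(\beta,\LL)$. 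Once this sandwich is in hand, everything else is bookkeeping with the already-proved concentration inequalities, and the two displayed conclusions follow by splitting the expectation over the good event and its complement.
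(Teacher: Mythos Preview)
Your overall plan is correct and matches the paper's approach: for \eqref{sim:1}, use the second constraint in \eqref{est:band} to reduce to $\rho_{t,K}(h_{\min})$ and apply Proposition \ref{donskerdensity}; for \eqref{sim:21}, rerun the proof of Theorem \ref{theo:est} and show the extra density constraint does not interfere. Two points need correction, however.

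First, for \eqref{sim:1} you do \emph{not} need any lower control on $\hat h_t$. The map $h\mapsto\sqrt h\,(\log(1/h))^4$ is bounded on $(0,1)$, so the second constraint in \eqref{est:band} already gives $\sqrt t\,\|\rho_{t,K}(\hat h_t)-\rho_{t,K}(h_{\min})\|_\infty=O((\log t)^{-1})=o(1)$ deterministically. The paper simply notes this and invokes Slutsky.

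Second, and more importantly, your proposed two-sided sandwich $c\,h^\ast_t\leq\hat h_t\leq C\,h^\ast_t$ cannot be established, and the upper bound claim is wrong: for $b\in\Sigma(\beta,\LL)$ one only has an \emph{upper} bound $B(h)\lesssim h^\beta$ on the bias, never a lower bound (the true $\rho_b$ may well be smoother than $\beta$), so there is nothing preventing $\hat h_t$ from being much larger than $h^\ast_t$. This is not how the Lepski analysis proceeds. The paper (as in the proof of Theorem \ref{theo:est}) splits into two cases. In Case~1 ($\hat h_t\geq\overline h_\rho$), one uses the \emph{first Lepski constraint itself} to bound $\|\overline\rho_{t,K}(\hat h_t)-\overline\rho_{t,K}(\overline h_\rho)\|_\infty\leq\sqrt{\wideparen M}\,\overline\sigma(\overline h_\rho,t)$, and then adds the bias and variance at the oracle bandwidth $\overline h_\rho$; no upper bound on $\hat h_t$ is invoked. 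In Case~2 ($\hat h_t<\overline h_\rho$), one bounds $\P_b(\hat h_t=h)$ for each $h<\overline h_\rho$ by the probability that $h^+$ fails \emph{either} constraint, giving two terms $(\mathbf I)+(\mathbf{II})$: $(\mathbf I)$ is exactly as in Theorem \ref{theo:est}, while the new term $(\mathbf{II})=\P_b\big(\sqrt t\|\rho_{t,K}(h^+)-\rho_{t,K}(h_{\min})\|_\infty>\sqrt{h^+}(\log(1/h^+))^4/\log t\big)$ is controlled via Proposition \ref{theo:cath1} (bounding both $\|\rho_{t,K}(h^+)-t^{-1}L_t^\bullet\|_\infty$ and $\|\rho_{t,K}(h_{\min})-t^{-1}L_t^\bullet\|_\infty$), yielding a bound of the form $\exp(-\widetilde\Lambda_1(\log t)^2)$. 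So replace your sandwich argument by this case split; everything else in your outline (concentration of $\wideparen M$ via Proposition \ref{prop:con_kernel_density_estimator}, the passage to the drift via Lemma \ref{lem:decomp}) is on target.
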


\begin{remark}
It could be argued that this simultaneous procedure is of limited relevance because we have verified that the simple choice $h_t=t^{-1/2}$ is optimal for the kernel density estimator.
In particular, an adaptive bandwidth selection procedure is not required, such that it suffices to apply the data-driven selection procedure stated in Section \ref{sec:adapt_einfach} to obtain an optimal bandwidth for the drift estimator. 
Besides being of theoretical interest, Theorem \ref{theo:sim} still creates added value in relevant aspects. 
The approach that we demonstrate actually could be extended, e.g., to the framework of multivariate diffusions processes. 
It is known that in higher dimensions, the kernel invariant density estimator is also rate-optimal, but the optimal bandwidth depends on the unknown smoothness such that it has to be chosen adaptively as it is certainly the case for the drift estimator.
In this situation, it is very appealing to have one bandwidth selection procedure that works simultaneously both for invariant density and drift estimation. 
Our result thus is pioneering work in this direction.
\end{remark}

\bigskip

\begin{appendix}

\section{Proof of results on Donsker-type theorems and asymptotic efficiency}\label{appeff}
The following result verifies the general conditions for the uniform CLT for diffusion local time given in \cite{vdvvz05} for the class of diffusion processes with $b\in\Sigma(\CC,A,\gamma,1)$.

\begin{lemma}\label{lem:gaussexists}
For any $b\in\Sigma(\CC,A,\gamma,1)$, it holds
\begin{equation}\label{eq:2.7}
\sqrt t\left(\frac{L_t^{\bullet}(X)}{t} -\rho_b\right)\ \stackrel{\P_b}{\Longrightarrow}\ \H,\qquad \text{ as } t\to\infty,
\end{equation}
in $\ell^\infty(\R)$, where $\H$ is a centred Gaussian random map with covariance structure specified in \eqref{cov:H}.
\end{lemma}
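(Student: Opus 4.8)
The plan is to deduce the statement by applying the general Donsker theorem for diffusion local time of \cite{vdvvz05} and verifying its hypotheses for every fixed diffusion with drift $b\in\Sigma(\CC,A,\gamma,1)$. Since $\sigma\equiv 1$, recall that the scale function and the speed measure of $\X$ have Lebesgue densities $s'(x)=\bigl(C_{b,\sigma}\rho_b(x)\bigr)^{-1}$ and $m'(x)=2C_{b,\sigma}\rho_b(x)$, with $\rho_b$ and $C_{b,\sigma}$ as in \eqref{eq:invdens}; in particular $s'\rho_b\equiv C_{b,\sigma}^{-1}$ and $(\log\rho_b)'=\rho_b'/\rho_b=2b$ on $\R$. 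The theorem of \cite{vdvvz05} applies to a one-dimensional regular diffusion and asks for (i) positive (ergodic) recurrence, i.e.\ $m(\R)<\infty$ together with $s(\pm\infty)=\pm\infty$, and (ii) a Chibisov--O'Reilly-type integrability/entropy condition on the indexing class which guarantees that weak convergence takes place in the \emph{unweighted} space $\ell^\infty(\R)$ and that the Gaussian limit is a tight Borel law there. I would check (i) and (ii) in turn and then match the limiting covariance with \eqref{cov:H}. (The linear growth bound $|b|\le\CC(1+|\cdot|)$ enters only through Definition \ref{def:B}, which guarantees that a unique ergodic solution with invariant density \eqref{eq:invdens} exists at all.)

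For (i): since $\rho_b$ is a probability density, $m(\R)=2C_{b,\sigma}\int_\R\rho_b=2C_{b,\sigma}<\infty$. The sign condition in \eqref{eq:sigma}, $b(x)\operatorname{sgn}(x)\le-\gamma$ for $|x|>A$, integrated in the exponent of \eqref{eq:invdens}, yields $\rho_b(x)\lesssim\e^{-2\gamma|x|}$ for $|x|$ large, hence $1/\rho_b$ grows at least exponentially and $s(\pm\infty)=\int^{\pm\infty}s'=\pm\infty$. Thus $\X$ is positive recurrent with invariant density $\rho_b$, which also fixes the centring in \eqref{eq:2.7}.

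For (ii): using $s'=(C_{b,\sigma}\rho_b)^{-1}$ and writing $g_x(z):=(\mathds{1}\{z\ge x\}-F_b(z))/\rho_b(z)$, the relevant condition reduces to finiteness of $\int_{-\infty}^0 F_b^2/\rho_b$ and $\int_0^\infty(1-F_b)^2/\rho_b$, together with the entropy integral for the class $\{g_x:x\in\R\}\subset L^2(\mu_b)$. Here the sign condition does the work a second time: from $\rho_b'/\rho_b=2b\le-2\gamma$ on $(A,\infty)$ one gets $1-F_b(z)=\int_z^\infty\rho_b\le\rho_b(z)/(2\gamma)$ for $z>A$, so $(1-F_b)^2/\rho_b\le\rho_b/(4\gamma^2)$ is integrable near $+\infty$; the symmetric bound handles $F_b^2/\rho_b$ near $-\infty$, and on compacts everything is continuous. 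For $x<y$ one computes $\|g_x-g_y\|_{L^2(\mu_b)}^2=\int_x^y\rho_b^{-1}=C_{b,\sigma}\bigl(s(y)-s(x)\bigr)$, so the index class is isometric to $\R$ equipped with the square-root-of-scale metric, with envelope $\|g_x\|_{L^2(\mu_b)}$ bounded and, by the tail estimates just obtained, negligible outside a compact set weighted by $\rho_b$; this makes Dudley's entropy integral converge, which is exactly condition (ii).

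With (i) and (ii) in place, \cite{vdvvz05} gives $\sqrt t\,(t^{-1}L_t^\bullet(X)-\rho_b)\stackrel{\P_b}{\Longrightarrow}\H$ in $\ell^\infty(\R)$ with $\H$ a centred Gaussian process; substituting $s'=(C_{b,\sigma}\rho_b)^{-1}$ and $m(\R)=2C_{b,\sigma}$ into the general covariance kernel of \cite{vdvvz05} reproduces exactly \eqref{cov:H}. Finally, $\E[\H(x)^2]=4m(\R)\rho_b(x)^2\int_\R(\mathds{1}\{z\ge x\}-F_b(z))^2\,\d s(z)=8\rho_b(x)^2\|g_x\|_{L^2(\mu_b)}^2\lesssim\rho_b(x)^2\to 0$ as $|x|\to\infty$, so $\H$ admits a version in $C_0(\R)$ and the limit is a tight Borel law in $\ell^\infty(\R)$; the moment bound of Lemma \ref{lem:difflt} provides an independent cross-check of the $\sqrt t$-scaling. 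The main obstacle I anticipate is bookkeeping rather than conceptual: sharpening the two-sided tail estimates on $\rho_b$ and $F_b$ enough to verify the Chibisov--O'Reilly/entropy condition, and carefully matching the abstract covariance kernel of \cite{vdvvz05} with the normalisation in \eqref{cov:H}.
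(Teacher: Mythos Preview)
Your overall strategy---reduce to the Donsker theorem of \cite{vdvvz05} and verify its hypotheses using the sign condition $b(x)\operatorname{sgn}(x)\le -\gamma$ for $|x|>A$---is exactly that of the paper, and your verification of positive recurrence and of the tail bounds $(1-F_b)/\rho_b\le(2\gamma)^{-1}$ on $(A,\infty)$, $F_b/\rho_b\le(2\gamma)^{-1}$ on $(-\infty,-A)$ matches the paper's treatment of its condition~(a), namely $\int F_b^2(1-F_b)^2\,\d s<\infty$.

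The gap is in your treatment of the second (Chibisov--O'Reilly) condition. The paper invokes Corollary~2.7 of \cite{vdvvz05}, whose tightness hypothesis reads
\[
\lim_{x\to-\infty}\rho_b^2(x)\,|s(x)|\,\log\log|s(x)|\ =\ 0,
\]
and this is what is actually verified. Your substitute---an entropy argument for $\{g_x\colon x\in\R\}$ with the claim that the envelope $\|g_x\|_{L^2(\mu_b)}$ is bounded---does not go through: for $x>A$,
\[
\|g_x\|_{L^2(\mu_b)}^2\ \ge\ \int_A^x\frac{F_b(z)^2}{\rho_b(z)}\,\d z\ \sim\ \int_A^x\frac{\d z}{\rho_b(z)}\ =\ m(\R)\bigl(s(x)-s(A)\bigr)\ \longrightarrow\ \infty,
\]
so the envelope blows up like $|s(x)|^{1/2}$. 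Consequently your final display $\E[\H(x)^2]\lesssim\rho_b(x)^2$ is not a consequence of a bounded envelope; what is true (and what the paper proves) is $\rho_b(x)|s(x)|=O(1)$ via the sign condition, whence $\rho_b(x)^2\|g_x\|^2\lesssim \rho_b(x)\to 0$. You also dismiss the linear growth bound $|b(x)|\le\CC(1+|x|)$ as needed only for existence; in fact the paper uses it essentially here, to obtain $|s(x)|=O\bigl(|x|\exp(4Cx^2)\bigr)$ and hence $\log\log|s(x)|=O(x^2)$, which is what makes $\rho_b^2(x)|s(x)|\log\log|s(x)|\to 0$ work. Replacing your entropy sketch by the direct estimates $\rho_b(x)|s(x)|=O(1)$ (from the sign condition) and $\log\log|s(x)|=O(x^2)$ (from the growth condition) closes the argument.
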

\begin{proof}
We verify the conditions of Corollary 2.7 in \cite{vdvvz05} by showing that, for any $b\in\Sigma=\Sigma(\CC,A,\gamma,1)$,
\begin{enumerate}
\item[$\operatorname{(a)}$] $\int F_b^2(x)(1-F_b(x))^2\d s(x)<\infty$ and
\item[$\operatorname{(b)}$] $\lim_{x\to-\infty}\rho^2_b(x)|s(x)|\log\log|s(x)|=0$.
\end{enumerate}
With regard to (a), note first that, for $y>A$,
\[
\frac{1-F_b(y)}{\rho_b(y)}\ =\ \int_y^\infty \exp\left(2\int_y^vb(z)\d z\right)\d v\ \leq\ \int_y^\infty\e^{-2\gamma(v-y)}\d v\ =\ \frac{1}{2\gamma}
\] and, for $y<-A$,
\[
\frac{F_b(y)}{\rho_b(y)}\ =\ \int_{-\infty}^y \exp\left(-2\int_v^yb(z)\d z\right)\d v\ \leq\ \int_{-\infty}^y\e^{2\gamma(v-y)}\d v\ =\ \frac{1}{2\gamma}.
\]
Exploiting the relation $\rho_b(\d x) =(s'(x)m(\R))^{-1}\d x$ between the invariant measure and the scale function as well as the speed measure, respectively, we obtain
$\d s(x) = (\rho_b(x)m(\R))^{-1}\d x$.
Consequently, the above bounds imply that
\begin{align*}
&\int F_b^2(x)(1-F_b(x))^2\d s(x)\\
&\quad\leq\ \frac{1}{m(\R)}\int_\R \frac{F_b^2(x)(1-F_b(x))^2}{\rho_b(x)}\d x\\
&\quad\leq\ \frac{1}{m(\R)}\bigg[\int_{-\infty}^{-A}\frac{F_b^2(y)}{\rho^2_b(y)}\rho_b(y)\d y+ 2A\sup_{x\in[-A,A]}\rho_b^{-1}(x)+ \int_A^\infty \frac{(1-F_b(y))^2}{\rho^2_b(y)}\rho_b(y)\d y\bigg] < \infty.
\end{align*}
In order to verify (b), recall first that the scale function $s$ of $X$ is given by
\[s(x)=\int_0^x \exp\left(-2\int_0^yb(z)\d z\right)\d y=C_b\int_0^x \frac{1}{\rho_b(y)}\d y,\quad x\in\R .\] 
Since, for any $b\in\Sigma$, $b(x)\operatorname{sgn}(x)\leq -\gamma$ whenever $|x|>A$, we obtain for $x<-A$
\begin{align*}
\rho_b(x)|s(x)|&=\ C_b \rho_b(x)\int_{-A}^0 \frac{1}{\rho_b(y)}\d y\ +\ C_b \int_{x}^{-A} \frac{\rho_b(x)}{\rho_b(y)}\d y\\
&\lesssim\ o(1)+\int_x^{-A} \exp\left(-2\left(\int_x^0 b(z)\d z - \int_y^0 b(z)\d z\right)\right)\d y\\
&\lesssim\ o(1) + \int_x^{-A}\exp\left(-2\int_x^y b(z)\d z\right)\d y\\
&\lesssim\ o(1) + \int_x^{-A}\exp\left(-2\gamma(y-x)\right)\d y\\
&\sim\ o(1) + \frac{1}{2\gamma}\left(1-\exp\left(2\gamma(A+x)\right)\right)\ \sim \ o(1) + \frac{1}{2\gamma}\ =\  O(1),
\end{align*}
as $x\to-\infty$. Furthermore, for $x<-A$,
\begin{align*}
\rho_b(x)&=\ C_b^{-1}\exp\left(-2\int_x^0b(y)\d y\right)\ \lesssim \
\exp\left(-2\int_x^{-A}b(y)\d y\right)\\
&\lesssim\ \exp(2\gamma(A+x))\ \lesssim\ \e^{2\gamma x},
\end{align*} and, using the linear growth condition on $b$,
\begin{align*}
|s(x)|&=\ \int_x^0 \exp\left(2\int_y^0 b(z)\d z\right)\d y\ \lesssim\ \int_x^0\exp \left(2\int_y^{-A}b(z)\d z\right)\d y\\
&\lesssim\  \int_x^0 \exp\left(2C\int_y^{-A}(1-z)\d z\right)\d y\ \lesssim\ \int_x^0\exp(C(y^2-2y))\d y
\end{align*}
such that $|s(x)|=O\left(1+|x|\exp(4Cx^2)\right)$ and $\log\log|s(x)|=O(x^2)$ as $x\to-\infty$. 
Finally,
\[\rho_b^2(x)\ |s(x)|\ \log\log|s(x)|\ \lesssim \e^{2\gamma x}\ O(1)\ x^2\ =\ o(1)\quad \text{as } x\to-\infty.\]
Thus, condition (b) of Theorem 2.6 in \cite{vdvvz05} is satisfied.
Consequently, there exists a tight version of the Gaussian process $\H$, and \eqref{eq:2.7} holds true.
\end{proof}

The remainder of this section is devoted to complementing our study of asymptotic efficiency by stating the remaining proofs.
We start with verifying differentiability of the operator $G\ni h\mapsto \rho_{b+h}$.

\begin{lemma}\label{lemmadifferentiability} For any $h\in G$, set $H(\cdot):=\int_0^\cdot h(v)\d v$. 
Then, the operator $h\mapsto \rho_{b+h}$ as a function from $(\ell^\infty(\R)\cap \Lip_{\operatorname{loc}}(\R),\|\cdot\|_\infty)$ to $(\ell^\infty(\R),\|\cdot\|_\infty)$ is Fr\'echet-differentiable at $h=0$ in the sense that
\[\|\rho_{b+h}-\rho_b-2\rho_b(H-\mu_b(H))\|_\infty\ =\ o(\|h\|_\infty).\]
\end{lemma}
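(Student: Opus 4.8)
The plan is to compute $\rho_{b+h}$ explicitly using the closed form \eqref{eq:invdens} (with $\sigma\equiv 1$) and linearise in $h$. Recall that for any admissible drift $\tilde b$ we have $\rho_{\tilde b}(x) = C_{\tilde b}^{-1}\exp\!\big(2\int_0^x \tilde b(y)\,\d y\big)$, with $C_{\tilde b} = \int_\R \exp\!\big(2\int_0^u \tilde b(y)\,\d y\big)\,\d u$. Writing $B(x):=\int_0^x b(y)\,\d y$ and $H(x):=\int_0^x h(y)\,\d y$, we get $\int_0^x (b+h) = B(x)+H(x)$, hence
\[
\rho_{b+h}(x) \;=\; \frac{\e^{2B(x)+2H(x)}}{\int_\R \e^{2B(u)+2H(u)}\,\d u}\;=\;\rho_b(x)\,\frac{\e^{2H(x)}}{\int_\R \e^{2H(u)}\rho_b(u)\,\d u}\;=\;\rho_b(x)\,\frac{\e^{2H(x)}}{\mu_b(\e^{2H})}.
\]
The first thing to check is that $b+h\in\operatorname{Lip}_{\operatorname{loc}}(\R)$ and still satisfies the drift condition of Definition \ref{def:B}, so that $\rho_{b+h}$ is well-defined; since $h\in G=\ell^\infty(\R)\cap\operatorname{Lip}_{\operatorname{loc}}(\R)$ is bounded and the condition $b(x)\operatorname{sgn}(x)\le -\gamma\sigma^2(x)$ is required only for $|x|>A$, adding a bounded perturbation merely changes $\gamma$ and $A$, so $b+h$ lies in some $\Sigma(\C',A',\gamma',1)$ — enough to guarantee existence of the stationary density. (For small $\|h\|_\infty$ one could even keep $A$ fixed.)

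Next I would do a first-order Taylor expansion of the two exponential factors. Since $|H(x)|\le \|h\|_\infty\,|x|$ is \emph{not} globally bounded, some care is needed, but the key point is that $\rho_b$ has exponential tails (from the drift condition, as used e.g.\ in Lemma \ref{lem:gaussexists}), so $\mu_b(\e^{2H})<\infty$ and $\mu_b(H)$, $\mu_b(H^2)$ are finite. Using $\e^{2H(x)} = 1 + 2H(x) + O(H(x)^2)$ uniformly on compacts and $\mu_b(\e^{2H}) = 1 + 2\mu_b(H) + O(\|h\|_\infty^2)$ (the remainder controlled by dominated convergence against the integrable envelope $\e^{2\|h\|_\infty|u|}\rho_b(u)$), one obtains
\[
\rho_{b+h}(x) \;=\; \rho_b(x)\,\big(1+2H(x)+r_1(x)\big)\big(1 - 2\mu_b(H) + r_2\big)\;=\;\rho_b(x)\big(1 + 2H(x) - 2\mu_b(H)\big) + \text{(rem)},
\]
so that $\rho_{b+h}(x) - \rho_b(x) - 2\rho_b(x)(H(x)-\mu_b(H))$ is the remainder, which I claim is $o(\|h\|_\infty)$ in $\sup$-norm.

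The main obstacle — and the only genuinely delicate point — is controlling this remainder \emph{uniformly in $x$}, because $H(x)$ grows linearly while $\rho_b(x)$ decays exponentially: the product $\rho_b(x)H(x)$ and $\rho_b(x)H(x)^2$ are bounded, but one must extract the right power of $\|h\|_\infty$. The clean way is to bound $|H(x)|\le \|h\|_\infty |x|$ and use that $\sup_{x}\rho_b(x)\,|x|^k\e^{2\|h\|_\infty|x|}\le C_k$ is finite and, more importantly, bounded uniformly for $\|h\|_\infty\le 1$ (again from the tail bound $\rho_b(x)\lesssim \e^{-2\gamma|x|}$ valid for $|x|>A$, with $2\gamma>2\|h\|_\infty$ once $\|h\|_\infty<\gamma$). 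Then the quadratic-in-$H$ terms contribute $\sup_x \rho_b(x)H(x)^2 \lesssim \|h\|_\infty^2 \sup_x \rho_b(x)x^2 = O(\|h\|_\infty^2) = o(\|h\|_\infty)$, the cross term $\rho_b(x)\,2H(x)\cdot(-2\mu_b(H))$ is $\lesssim \|h\|_\infty \cdot |\mu_b(H)| \lesssim \|h\|_\infty^2$, and the scalar remainder $r_2$ times $\rho_b H$ is likewise $O(\|h\|_\infty^2)$. Collecting the pieces and taking the supremum over $x\in\R$ yields $\|\rho_{b+h}-\rho_b - 2\rho_b(H-\mu_b(H))\|_\infty = O(\|h\|_\infty^2) = o(\|h\|_\infty)$, which is the asserted Fréchet differentiability, with derivative $h\mapsto 2\rho_b(H-\mu_b(H))$ matching the operator $A'$ from \eqref{differentiability}.
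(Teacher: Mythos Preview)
Your proposal is correct and follows essentially the same approach as the paper's proof: write $\rho_{b+h}$ via the explicit formula \eqref{eq:invdens}, linearise the exponential factors, and control the remainder uniformly using the exponential tail decay of $\rho_b$ through the bound $\sup_{x}\rho_b(x)(1+x^2)\e^{2\|h\|_\infty|x|}=O(1)$. The only difference is purely organisational: you expand $\e^{2H(x)}$ and $\mu_b(\e^{2H})$ separately and then multiply, whereas the paper combines them into $\rho_b(x)\big(\e^{2H(x)+\log(C_b/C_{b+h})}-1\big)$ and Taylor-expands the single exponential, afterwards showing $\log(C_b/C_{b+h})=-2\mu_b(H)+o(\|h\|_\infty)$; the substance of the argument is identical.
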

\begin{proof}
Let $h\in \ell^\infty(\R)\cap\Lip_{\operatorname{loc}}(\R)$, and denote by $\rho_{b+h}$ the invariant density corresponding to the diffusion process with drift $b+h$. 
Note that, for $\|h\|_\infty$ sufficiently small, $b+h\in \Sigma(\tilde\C, \tilde A, \tilde \gamma,1)$ for some positive constants $\tilde\C, \tilde A,\tilde \gamma$.
Set
\[C_g\ :=\ \int_\R \exp\left(2\int_0^x g(v)\d v\right)\d x,\quad g\in\{b,b+h\},\quad B(\cdot)\ :=\ \int_0^\cdot b(v)\d v.
\]
Then, for any $x\in\R$, 
\begin{align*}
\rho_{b+h}(x)-\rho_b(x)
&=\ C_b^{-1}\e^{2B(x)}\left(\frac{C_b}{C_{b+h}}\ \e^{2H(x)}-1\right)\\
& =\ \rho_b(x)\bigg\{2H(x)+\log\left(\frac{C_b}{C_{b+h}}\right)+\frac{1}{2}\left(2H(x)+\log\left(\frac{C_b}{C_{b+h}}\right)\right)^2\\
&\hspace*{18em}\times \e^{\theta_1(x)\left(\log\left(\frac{C_b}{C_{b+h}}\right)+2H(x)\right)}\bigg\},
\end{align*}
for some $\theta_1(x)\in(0,1)$.
Moreover,
\[
\log\left(\frac{C_b}{C_{b+h}}-1+1\right)
\ =\ \frac{C_b}{C_{b+h}}-1+\frac{1/2}{1+\theta_2\left(C_bC_{b+h}^{-1}-1\right)}\left(\frac{C_b}{C_{b+h}}-1\right)^2,
\]
for some $\theta_2\in(0,1)$.
Next, we will show that
\[
\frac{C_b-C_{b+h}}{C_{b+h}}\ =\ -2\int H(v)\rho_b(v)\d v+o(\|h\|_\infty).
\]
Note that
\begin{align*}
C_b-C_{b+h}&=\ \int\e^{2B(v)}\left(1-\e^{2H(v)}\right)\d v\\
&=\ -\int\e^{2B(v)}\left(2H(v)+\frac{1}{2}\e^{2\theta_3(v)H(v)}4H^2(v)\right)\d v, \quad \theta_3(v)\in(0,1),\\
&=\ -C_b\int 2H(v)\rho_b(v)\d v + o(\|h\|_\infty),
\end{align*}
where we have used $|H(v)|\leq |v|\|h\|_\infty$, $v\in\R$, as well as the fact $\int\e^{2\int_0^v b(x) + 2|h(x)|\d x}|v|^2\d v=O(1)$. 
We conclude
\begin{align*}
\frac{C_b-C_{b+h}}{C_{b+h}}&=\ \frac{-2C_b\mu_b(H)+o(\|h\|_\infty)}{(C_{b+h}-C_b)+C_b}\\ 
&=\ -2\mu_b(H)+\frac{2\mu_b(H)(C_{b+h}-C_b)}{(C_{b+h}-C_b)+C_b}+\frac{o(\|h\|_\infty)}{C_{b+h}}\\
&=\ -2\mu_b(H)+\frac{o(1)(C_{b+h}-C_b)}{o(1)+C_b}+\frac{o(\|h\|_\infty)}{C_{b+h}}\\
&=\ -2\mu_b(H)+\frac{o(1)O(\|h\|_\infty)+o(\|h\|_\infty)}{o(1)+C_b}+o(\|h\|_\infty)\\
&=\ -2\mu_b(H)+o(\|h\|_\infty).
\end{align*}
Consequently, $\left(\frac{C_b-C_{b+h}}{C_{b+h}}\right)^2=o(\|h\|_\infty)$, and it follows
\[
\log\left(\frac{C_b}{C_{b+h}}\right)= -2\mu_b(H)+o(\|h\|_\infty)+\frac{1}{2}\ O(1)o(\|h\|_\infty)= -2\mu_b(H)+o(\|h\|_\infty).
\]
Taking everything into consideration,
\begin{align*}
\rho_{b+h}(x)-\rho_b(x)&=\ \rho_b(x)\bigg\{2(H(x)-\mu_b(H))+o(\|h\|_\infty)\\
&\qquad+\left(2(H(x)-\mu_b(H))+o(\|h\|_\infty)\right)^2\ \frac{1}{2}\e^{2\theta_1(x)H(x)}\e^{\theta_1(x)(-2\mu_b(H)+o(\|h\|_\infty))}\bigg\},
\end{align*}
and thus
\begin{align*}
&\left\|\rho_{b+h}-\rho_b-2\rho_b(H-\mu_b(H))\right\|_\infty\\
&\quad\leq\ o(\|h\|_\infty)+\left\|x\,\mapsto \,\rho_b(x)\left\{16\|h\|_\infty^2x^2+o(\|h\|_\infty)\right\}\e^{2\|h\|_\infty|x|}O(1)\right\|_\infty= o(\|h\|_\infty),
\end{align*}
using that $\sup_{x\in\R}\rho_b(x)\e^{2\|h\|_\infty|x|}(1+x^2)\ =\ O(1)$.
\end{proof}

We proceed with verifying the result on the image of the generator $\L_b$ and the expression of $\|\partial \L_b^{-1}(g_c)\|_{\l^2(\mu_b)}$ in terms of $H(x,y)=\E[\H(x)\H(y)]$. 
Recall that $\H$ denotes the centred Gaussian process with covariance structure specified by \eqref{cov:H}.

\begin{proof}[Proof of Lemma \ref{lemma2}]
Rewriting $H(x,y)$ as 
\begin{equation}\label{eq:Hxy}
H(x,y)= 4\rho_b(x)\rho_b(y)\int \Bigg[(\mathds{1}\{[x,\infty)\}(z)-F_b(z))\ (\mathds{1}\{[y,\infty)\}(z)-F_b(z))\Bigg]\rho_b^{-1}(z)\d z
\end{equation}
yields
\begin{align*}
\iint g(x)H(x,y)g(y)\d y \d x&=\ 4\iint g(x)g(y) \rho_b(x)\rho_b(y) \int h(z,x)h(z,y)\rho_b(z)\d z\d x\d y\\
&=\ \int \left[\int 2g(x)h(z,x)\rho_b(x)dx\right]^2\rho_b(z)\d z\\
&=\ \big\|\int 2g(x)\rho_b(x)h(\cdot,x)\d x\big\|_{\l^2(\mu_b)}^2\ =\ \big\|\frac{\d}{\d z}\ \T(z)\ \big\|_{\l^2(\mu_b)}^2,
\end{align*} 
where
\begin{align*}
\T(z)&=\ \int_{0}^z \int 2g(x)\rho_b(x)h(u,x)\d x \d u\\
&=\ \int_{0}^z  \int_{-\infty}^u 2g(x)\rho_b(x)\frac{1-F_b(u)}{\rho_b(u)}\d x\d u
 - \int_{0}^z \int_u^\infty 2g(x)\rho_b(x)\d x \frac{F_b(u)}{\rho_b(u)}\d u.
\end{align*}
Straightforward calculus gives
\begin{align*}
\T'(z)&=\ -\frac{F_b(z)}{\rho_b(z)}\int_z^\infty 2g(x)\rho_b(x)\d x + \frac{1-F_b(z)}{\rho_b(z)}\int_{-\infty}^z 2g(x)\rho_b(x)\d x,\\
\T''(z)&=\ 2 g(z) - \int2g(x)\rho_b(x)\d x- 2b(z)\T'(z).
\end{align*}
One can show that $\T$ and its derivatives satisfy an at most linear growth condition, and it is possible to approximate $\T$ by a sequence of functions $\T_n$ in $C_c^\infty$ such that $\lim_{n\to \infty} \|\partial^k\T_n-\partial^k\T\|_{\l^4(\mu_b)}=0,$ $k=0,1,2$. 
In particular, the at most linear growth condition on $b$ implies that $\T_n$ converges to $\T$ in $\l^2(\mu_b)$ and $\lim_{n\to\infty}\L_b(\T_n)=g-\mu_b(g)$ in $\l^2(\mu_b)$. 
Since $\L_b $ is a closed operator in $\l^2(\mu_b)$, we can conclude that $\T\in\mathcal D(\L_b )$ and $\L_b(\T)=g-\mu_b(g)$.
\end{proof}

In Section \ref{subsec:crinv}, the Cram\'er--Rao lower bound for estimating $\Phi_g(\P_b)=\int g(x)\rho_b(x)\d x=\int g\d\mu_b$ is identified as $\|\partial \L_b^{-1}g_c\|_{\l^2(\mu_b)}^2$.
The following result establishes the corresponding result for pointwise estimation of the invariant density.

\begin{proposition}\label{prop:crpoint}
The Cram\'er--Rao lower bound for pointwise estimation of $\rho_b(y)$, $y\in\R$ fixed, is defined via \eqref{CRLBpointwise}.
\end{proposition}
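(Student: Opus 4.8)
The strategy is to obtain the pointwise Cramér--Rao bound \eqref{CRLBpointwise} by approximating the point evaluation functional $b^*_y\colon f\mapsto f(y)$ by the smooth integral functionals $\Phi_g$ for which the bound $\|\partial\L_b^{-1}g_c\|_{\l^2(\mu_b)}^2$ has already been identified in Section \ref{subsec:crinv}. Concretely, I would fix $y\in\R$, pick a mollifier $\omega\in C_c^\infty(\R)$ with $\omega\geq 0$, $\int\omega=1$, $\supp(\omega)\subseteq[-1,1]$, and set $g_\delta(x):=\delta^{-1}\omega((x-y)/\delta)$, so that $\Phi_{g_\delta}(\rho_b)\to\rho_b(y)$ and, more importantly, the associated differentiable paths $\Phi_{g_\delta}(\P_{t,h})$ have derivative $h\mapsto\Phi_{g_\delta}(A'h)$ whose Riesz representer in $(G,\l^2(\mu_b))$ has squared norm $\|\partial\L_b^{-1}(g_\delta)_c\|_{\l^2(\mu_b)}^2$ by \eqref{rieszrepresenterableitungintegralfunktional} and Lemma \ref{lemma2}. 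By the convolution theorem 3.11.2 in \cite{vdvw96}, this quantity is exactly the Cramér--Rao bound for estimating $\Phi_{g_\delta}(\P_b)$, and by Lemma \ref{lemma2} it equals $\iint g_\delta(x)H(x,z)g_\delta(z)\,\d z\,\d x$ with $H$ as in \eqref{eq:Hxy}.

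The first main step is then to pass to the limit $\delta\to 0$ in this double integral. Using the representation \eqref{eq:Hxy}, namely $H(x,z)=4\rho_b(x)\rho_b(z)\int h(u,x)h(u,z)\rho_b(u)\,\d u$, and the fact that $x\mapsto h(u,x)$ and $x\mapsto\rho_b(x)$ are bounded and continuous, the dominated convergence theorem gives
\[
\iint g_\delta(x)H(x,z)g_\delta(z)\,\d z\,\d x\ \longrightarrow\ H(y,y)\ =\ 4\rho_b(y)^2\int h(u,y)^2\rho_b(u)\,\d u\ =\ \|2\rho_b(y)h(\cdot,y)\|_{\l^2(\mu_b)}^2,
\]
which is precisely $\CR(y)$ in \eqref{CRLBpointwise}. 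The second main step is to argue that this limiting value is indeed the Cramér--Rao bound for the point evaluation itself, not merely the limit of the bounds for the approximating functionals. For this I would invoke the characterisation of the optimal bound as a supremum: the Cramér--Rao bound for a linear functional $L$ equals $\sup\{|L(A'h)|^2/\|h\|_{\l^2(\mu_b)}^2 : h\in G\setminus\{0\}\}$, i.e. the squared operator norm of $h\mapsto L(A'h)$ on $(G,\l^2(\mu_b))$, equivalently the squared norm of its Riesz representer when $A'$ has dense range in the relevant sense. Since $b^*_y(A'h)=2\rho_b(y)(H(y)-\mu_b(H))$ depends continuously on $h$ in $\l^2(\mu_b)$ (the map $h\mapsto H(y)-\mu_b(H)$ being bounded because $h\mapsto H$ is, using $|H(y)|\le|y|\,\|h\|_\infty$ on the Lipschitz part and a density argument on $\l^2(\mu_b)$), the functional $b^*_y\circ A'$ extends to a bounded linear functional on $(G,\l^2(\mu_b))$, its Riesz representer is the $\l^2(\mu_b)$-limit of those of the $\Phi_{g_\delta}\circ A'$, and hence the bound for $b^*_y$ is the limit of the bounds for $\Phi_{g_\delta}$; combined with the first step this yields $\CR(y)$.

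The step I expect to be the main obstacle is the rigorous justification that passing to the limit in the functionals is legitimate at the level of the convolution theorem — that is, verifying that the pointwise evaluation $b^*_y$ genuinely is a "differentiable parameter" in the sense of Theorem 3.11.2 of \cite{vdvw96} relative to the asymptotically normal experiments \eqref{def:set}, so that the abstract lower bound applies directly to $\rho_b(y)$ rather than only to smooth approximations. The delicate point is the interplay between the topology on $G$ (a subspace of $\l^2(\mu_b)$, but equipped above with $\|\cdot\|_\infty$ in Lemma \ref{lemmadifferentiability}) and the continuity of $h\mapsto H(y)-\mu_b(H)$ in the $\l^2(\mu_b)$-seminorm; once this continuity and the density of $G$ in $\l^2(\mu_b)$ are in hand, the Riesz representation and the convergence of representers follow routinely, and the remaining computations reduce to the dominated-convergence argument sketched above together with the explicit formula \eqref{eq:Hxy} for $H$.
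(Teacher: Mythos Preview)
Your approach is essentially the same as the paper's: approximate the point evaluation $b^*_y$ by integral functionals $\Phi_{g_\delta}$ with $g_\delta$ a mollifier centred at $y$, and pass to the limit $\delta\to 0$. The execution differs slightly. You split into two steps---first compute the limit of the bounds $\iint g_\delta H g_\delta$ via dominated convergence, then separately argue that this limit is the bound for $b^*_y$---and you flag the second step as the main obstacle. The paper collapses these into one cleaner step: using the explicit formula for $\partial\L_b^{-1}((g^y_\epsilon)_c)$ from Lemma~\ref{lemma2}, it shows directly that $\partial\L_b^{-1}((g^y_\epsilon)_c)(z)\to 2\rho_b(y)h(z,y)$ pointwise (for $z\neq y$) and in $\l^2(\mu_b)$, exploiting that for $\epsilon$ small the support of $g^y_\epsilon$ lies entirely on one side of $z$. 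Since $b^*_y(A'h)=\lim_{\epsilon\downarrow 0}\Phi_{g^y_\epsilon}(A'h)=\lim_{\epsilon\downarrow 0}-\langle\partial\L_b^{-1}((g^y_\epsilon)_c),h\rangle_{\mu_b}$, this $\l^2(\mu_b)$-convergence immediately identifies the Riesz representer of $b^*_y\circ A'$ as $-2\rho_b(y)h(\cdot,y)$, and the Cram\'er--Rao bound is its squared norm. Your worry about ``passing to the limit at the level of the convolution theorem'' thus dissolves: the convolution theorem applies directly to $b^*_y$ once its Riesz representer is exhibited, and no separate continuity-in-$\l^2(\mu_b)$ argument for $h\mapsto H(y)-\mu_b(H)$ is needed.
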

\begin{proof}
Let $v\colon \R\to \R_+$ be a smooth, symmetric function with support $\supp(v)\subseteq [-1,1$] and $\int v(z) \d z=1$, and define $g^y_{\epsilon}:=\epsilon^{-1}v\left((\cdot-y)\epsilon^{-1}\right)$, for any $y\in\R$. 
Denote by 
\[\CR(y,\epsilon)\ :=\ \Big\|\frac{\mathrm{d}}{\mathrm d z}L_b^{-1}(g^y_\epsilon-\mu_b(g^y_\epsilon))(z)\Big\|_{\l^2(\mu_b)}^2
\] the Cram\'er--Rao lower bound for estimation of $\int g^y_\epsilon d\mu_b.$ Further, note that
 \[\lim_{\epsilon\downarrow 0 } \int g^y_\epsilon d\mu_b=\rho_b(y)=b^*_y(\Psi(\P_b)), \text{ for any } y\in\R,\] where the pointwise evaluation $b^*_y:\ell^\infty(\R)\longrightarrow \R, f\mapsto f(y)$, is an element of the dual of $\ell^\infty(\R)$. We are interested in the Cram\'er--Rao lower bound for pointwise estimation of $\rho_b(y)=b^*_y(\Psi(\P_b))$, $y\in\R$. This bound is given by the squared $\l^2(\mu_b)$-norm of the Riesz representer of $b^*_yA'$. Since $A'h$ is a continuous function, we have, for any $y\in\R$,
\begin{eqnarray*}
b^*_yA'(h)\ =\ \lim_{\epsilon\downarrow 0}\int g^y_\epsilon (x)A'h(x)\d x\ =\ \lim_{\epsilon\downarrow0}-\langle\partial\L^{-1}_b(g^y_\epsilon-\mu_b(g^y_\epsilon)),\ h\rangle_{\mu_b}
\end{eqnarray*}
due to \eqref{rieszrepresenterableitungintegralfunktional}.
We proceed with proving that the limit $\lim_{\epsilon\downarrow 0}  \partial L^{-1}_b(g^y_\epsilon - \mu_b(g^y_\epsilon))$ exists in $\l^2(\mu_b)$. 
Fix $y\in\R$. 
For any $z\in\R$, $z\neq y$, we have for $\epsilon>0$ small enough
\begin{eqnarray*}
\partial\L_b^{-1}(g^y_\epsilon-\mu_b(g^y_\epsilon))(z)
&=& 2\cdot\mathbbm{1}\{y\leq z\}\frac{1-F_b(z)}{\rho_b(z)}\int_{-\infty}^z g_\epsilon^y(x)\rho_b(x) \d x\\
&&\hspace*{3em} - 2\cdot\mathbbm 1 \{y> z\}\frac{F_b(z)}{\rho_b(z)}\int_z^\infty g_\epsilon^y(x) \rho_b(x)\d x,
\end{eqnarray*}
due to Lemma \ref{lemma2} and since $\supp( g_\epsilon^y)\subseteq [y-\epsilon,y+\epsilon]$. 
Moreover, as \[ \max\left\{\sup_{z\in\R}\mathbbm{1}\{y\leq z\}\frac{1-F_b(z)}{\rho_b(z)},\ \sup_{z\in\R}\mathbbm 1 \{y> z\}\frac{F_b(z)}{\rho_b(z)}\right\}<\infty,\]
one obtains
\[\lim_{\epsilon\downarrow 0 } \partial\L_b^{-1}(g^y_\epsilon-\mu_b(g^y_\epsilon))(z)
\ =\ 2\cdot \frac{\mathbbm{1}\{z\geq y \}-F_b(z)}{\rho_b(z)}\rho_b(y)\ =\ 2\rho_b(y) h(z,y)
\]
a.e.~and in $\l^2(\mu_b)$.
We conclude that $b^*_yA'(h)=-\langle2\rho_b(y)h(\cdot,y),h \rangle_{\mu_b}$ such that the assertion follows.
\end{proof}

We are now in a position to prove asymptotic efficiency as defined in Definition \ref{def:eff} for the kernel invariant density estimator with the `universal' bandwidth choice $t^{-1/2}$.

\begin{proof}[Proof of Theorem \ref{theo:eff}]
Rewriting the covariance $\E[\H(x),\H(y)]=H(x,y)$ as in \eqref{eq:Hxy}, one immediately sees that the law of $\H$ corresponds to the optimal distribution of the convolution theorem due to \eqref{optimalcovariance}.
It remains to prove regularity of the estimator. 
\paragraph{Regularity of the estimator $\rho_{t,K}(t^{-1/2})(\cdot)$}
Fix $y\in\R$. 
As we have already seen in the proof of Proposition \ref{donskerdensity},
\[\sqrt t\left(\rho_{t,K}(t^{-1/2}) - t^{-1} L^\bullet_t(\X)\right)(y)\ =\ o_{\P_b}(1),\]
(see \eqref{convinprop}).
We proceed by exploiting the martingale structure obtained from Proposition 1.11 in \cite{kut04},
\begin{align*}
&\sqrt t\left(t^{-1}L^y_t(\X) -\rho_b(y)\right)\\
&\quad =\ \frac{2\rho_b(y)}{\sqrt t}\int_{X_0}^{X_t}\frac{\mathbbm 1\{v>y\}-F_b(v)}{\rho_b(v)}\d v -  \frac{2\rho_b(y)}{\sqrt t}\int_0^t \frac{\mathbbm 1\{X_u>y\}-F_b(X_u)}{\rho_b(X_u)}\d W_u\\
&\quad =\ \frac{R(X_t,y)}{\sqrt t} - \frac{R(X_0,y)}{\sqrt t} - \frac{2\rho_b(y)}{\sqrt t}\int_0^t \frac{\mathbbm 1 \{X_u>y\}-F_b(X_u)}{\rho_b(X_u)}\d W_u,
\end{align*}
with $R(x,y):=2\rho_b(y)\int_0^x \frac{\mathbbm 1\{v>y\}-F_b(v)}{\rho_b(v)}\d v$.
The process $(X_s)_{s\geq0}$ is stationary under $\P_b$, and therefore
\begin{equation*}
 \frac{R(X_t,y)-R(X_0,y)}{\sqrt t}=o_{\P_b}(1).
\end{equation*}
Let $h\in G$, and fix $a,c\in\R$.
Then,
\begin{align*}
&(a,c) \left(\sqrt t\left(\rho_{t,K}(t^{-1/2})(y)-\rho_b(y)\right),\log\left(\frac{\d \P_{t,h}}{\d \P_b}\right)\right)\tt\\
&\quad=\ o_{\P_b}(1) - a\cdot 2\frac{\rho_b(y)}{\sqrt t}\int_0^t \frac{\mathbbm 1\{X_u>y\}-F_b(X_u)}{\rho_b(X_u)}\d W_u\\
&\hspace*{10em} + \ c\cdot \left( \frac{1}{\sqrt t}\int_0^th(X_s)\d W_s - \frac{1}{2}\int h^2(y)\rho_b(y)\d y\right)\\
&\quad=\ o_{\P_b}(1) +  \frac{1}{\sqrt t}\int_0^t \left(-2a\rho_b(y)k(X_u,y) + ch(X_u)\right)\d W_u - \frac{1}{2}c\int h^2(y)\rho_b(y)\d y\\
&\quad\overset{\P_b}{\Longrightarrow}\ \mathcal N \left(-\frac{c}{2}\int h^2(y)\rho_b(y)\d y, \delta^2\right),
\end{align*}
with $k(x,y):= \left(\mathbbm 1\{x>y\}-F_b(x)\right)\rho_b^{-1}(x)$ and
\begin{align*}
\delta^2&=\ \E_b\left[\left(  ch(X_0) - 2a\rho_b(y)k(X_0,y)\right)^2\right]\\
&=\ \E_b\left[4a^2\rho^2_b(y)k^2(X_0,y) + c^2h^2(X_0) - 4ac\rho_b(y)k(X_0,y)h(X_0)\right]\\
&=\ (a,c) \begin{pmatrix}\E_b\left[4\rho_b^2(y)k^2(X_0,y)\right] & -\E_b\left[2\rho_b(y)k(X_0,y)h(X_0)\right]\\-\E_b\left[2\rho_b(y)k(X_0,y)h(X_0)\right] & \E_b\left[h^2(X_0)\right]  \end{pmatrix} 
\begin{pmatrix}a\\ c\end{pmatrix}.
\end{align*}
The Cram\'er--Wold device then implies that 
\begin{align*}
&\left(\sqrt t\left(\rho_{t,K}(t^{-1/2})(y)-\rho_b(y)\right),\log\left(\frac{\d \P_{t,h}}{\d \P_b}\right)\right)\\
&\hspace*{10em} \overset{\P_b}{\Longrightarrow} \mathcal N \left(\begin{pmatrix}0\\ -\frac{1}{2}\E_b(h^2(X_0))\end{pmatrix},\begin{pmatrix}\E_b(4\rho_b^2(y)k^2(X_0,y)) & -\tau\\  -\tau& \E_b(h^2(X_0)) \end{pmatrix}\right),
\end{align*}
where $\tau:=2\E_b\left[\rho_b(y)k(X_0,y)h(X_0)\right]$.
In turn, Le Cam's Third Lemma yields
\[\sqrt t\left(\rho_{t,K}(t^{-1/2})(y)-\rho_b(y)\right)\ \overset{\P_{t,h}}{\Longrightarrow}\ \mathcal N \left(-\tau, 4\E_b\left[\rho_b^2(y)k^2(X_0,y)\right]\right).\]
Furthermore,
\begin{align*}
&\sqrt t\left(\rho_{t,K}(t^{-1/2})(y) - \rho_{b+t^{-1/2}h}(y)\right)\\
&\quad= \sqrt t\left(\rho_{t,K}(t^{-1/2})(y) - \rho_{b}(y)\right)- \sqrt t\left(b^*_y(\Psi(\P_{t,h}))-b^*_y(\Psi(\P_b))\right)\\
&\quad= \sqrt t\left(\rho_{t,K}(t^{-1/2})(y) - \rho_{b}(y)\right)+\langle 2 \rho_b(y)h(\cdot,y),h\rangle_{\mu_b}+o(1)\\
&\quad= \sqrt t\left(\rho_{t,K}(t^{-1/2})(y) - \rho_{b}(y)\right)+\tau+o(1)\\
&\quad\overset{\P_{t,h}}{\Longrightarrow}\ \mathcal N\left(0, \ 4\rho_b^2(y)\int \left(\mathbbm 1 \{z\geq y\}-F_b(z)\right)^2\rho_b^{-1}(z) \d z\right)\ =\ \mathcal N\left(0,\ \E\left[\H^2(y)\right]\right).
\end{align*} 
We conclude that $\rho_{t,K}(t^{-1/2})(y)$ is a regular and consequently efficient estimator of $\rho_b(y)$ for any $y\in\R$. 

\paragraph{Regularity in $\ell^\infty(\R)$}
In an analogous way, it can be shown that all finite-dimensional marginals of 
\begin{equation}\label{eq:regul}
\sqrt t\left(\rho_{t,K}(t^{-1/2}) - \rho_{b+t^{-1/2}h}\right)
\end{equation} 
weakly converge to those of $\H$ under $\P_{t,h}$, for any $h\in G$. 
Therefore, the estimator $\rho_{t,K}(t^{-1/2})$ is also regular in $\ell^\infty(\R)$ if we can show that the process in \eqref{eq:regul} is asymptotically tight. 
As we have already seen that the limiting distribution is optimal, this then gives efficiency of $\rho_{t,K}(t^{-1/2})$ in $\ell^\infty(\R)$.
We proceed as in \cite[Theorem 11.14]{kosorok2008}. 
Fix $\epsilon >0$. Since $\P_{t,h}$ and $\P_b$ are contiguous, $\d\P_{t,h}/\d\P_b$ is stochastically bounded wrt to both $\P_b$ and $\P_{t,h}$.
Hence, we find a constant $M$ such that 
\[\limsup_{t\to\infty}\P_{t,h}\left(\frac{\d \P_{t,h}}{\d\P_b}>M\right)\ \leq\ \frac{\epsilon}{2}.\]
Furthermore, since $\sqrt t(\rho_{t,K}(t^{-1/2})-\rho_{b})$ is asymptotically tight wrt $\P_b$, there exists a compact set $K\subseteq\ell^\infty(\R)$ such that, for any $\delta>0$,
\[\limsup_{t\to\infty}\P_b\left(\sqrt t\left(\rho_{t,K}(t^{-1/2}) - \rho_{b}\right)\in \left(\ell^\infty(\R)\setminus K^\delta\right)^*\right)\ \leq\ \frac{\epsilon}{2M},\]
$K^\delta$ denoting the $\delta$-enlargement of $K$. 
The superscript $^*$ here stands for the minimal measurable cover wrt to both $\P_b$ and $\P_{t,h}$. 
From these choices, we deduce, for any $\delta > 0$, 
\begin{align*}
&\limsup_{t\to\infty}\P_{t,h}\left(\sqrt t\left(\rho_{t,K}(t^{-1/2}) - \rho_b\right)\in \left(\ell^\infty(\R)\setminus K^\delta\right)^*\right)\\
&\quad=\ \limsup_{t\to\infty}\int\mathbbm{1}\left\{\sqrt t\left(\rho_{t,K}(t^{-1/2}) - \rho_b\right)\in \left(\ell^\infty(\R)\setminus K^\delta\right)^*\right\}\d\P_{t,h}\\
&\quad=\ \limsup_{t\to\infty}\int\mathbbm{1}\left\{\sqrt t\left(\rho_{t,K}(t^{-1/2}) - \rho_b\right)\in \left(\ell^\infty(\R)\setminus K^\delta\right)^*\right\} \frac{\d \P_{t,h}}{\d \P_b} \d \P_b\\
&\quad\leq\ \limsup_{t\to\infty} M \int \mathbbm{1}\left\{\sqrt t\left(\rho_{t,K}(t^{-1/2})-\rho_b\right)\in\left(\ell^\infty(\R)\setminus K^\delta\right)^*\right\} \d \P_b\\
&\hspace*{18em}+\limsup_{t\to\infty}\P_{t,h}\left(\frac{\d \P_{t,h}}{\d\P_b}>M\right)\ \leq\ \epsilon.
\end{align*}
Due to the differentiability property \eqref{differentiability}, we conclude that \eqref{eq:regul} is asymptotically tight wrt $\P_{t,h}$, as well. 
We have thus shown that $\rho_{t,K}(t^{-1/2})$ is an efficient estimator in $\ell^\infty(\R)$.
\end{proof}

\bigskip

\section{Proofs for adaptive drift estimation}
\subsection{Preliminaries}\label{app:a1}
We start with stating and proving a number of auxiliary results required for the investigation of the proposed $\sup$-norm adaptive drift estimation procedure.
Recall the definition of $\Sigma(\beta,\LL)$ in \eqref{def:Sigmabeta}.

\medskip

\begin{lemma}\label{biasandvariance}
For $b\in\Sigma(\beta,\mathcal{L})$, $\beta,\mathcal{L}>0$, and for $\overline \rho_{t,K}$ defined according to \eqref{est:derivative} with some kernel function $K\colon\R\to\R$ satisfying \eqref{kernel}, it holds, for any $h\in \mathcal H$,
\begin{align}\label{eqvar}
\sup_{b\in\Sigma(\beta,\mathcal L)}\E_b\left[\|\overline\rho_{t,K}(h)-\E_b\left[\overline\rho_{t,K}(h)\right]\|^2_\infty\right]
&\leq\ \K^2\overline{\sigma}^2(h,t),\\\label{eq:bias}
\sup_{b\in\Sigma(\beta,\mathcal L)}\|\E_b\left[\overline\rho_{t,K}(h)\right]-\rho_b b\|_\infty
&\leq\ B(h),
\end{align}
where $\K$ denotes some positive constant, $\overline\sigma^2(\cdot,\cdot)$ is defined according to \eqref{def:ovsigma} and 
\[
B(h):= h^\beta \ \frac{\LL}{2[\beta]!}\ \int|K(v)||v|^\beta\d v.\]
\end{lemma}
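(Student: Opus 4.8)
The statement separates cleanly into a variance bound \eqref{eqvar} and a bias bound \eqref{eq:bias}, and I would handle them independently.

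\textbf{Variance bound \eqref{eqvar}.} This is essentially a direct consequence of Proposition \ref{prop:csi} (concentration of $\overline\rho_{t,K}(h)$). The plan is to specialize \eqref{con_stoch_int} with $p=2$: we get
\[
\sup_{b\in\Sigma(\beta,\LL)}\left(\E_b\left[\|\overline\rho_{t,K}(h)-\E_b[\overline\rho_{t,K}(h)]\|_\infty^2\right]\right)^{1/2}\ \leq\ \phi_{t,h}(2).
\]
It then remains to check that $\phi_{t,h}(2)^2\lesssim \overline\sigma^2(h,t) = (\log(t/h))^3/t + \log(t/h)/(th)$ uniformly over $h\in\HH$. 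I would go through the seven terms in \eqref{phith} with $u=2$ and absorb each one into a constant multiple of $\overline\sigma(h,t)$. The leading terms $\tfrac{1}{\sqrt t}(\log(t/h))^{3/2}$ and $\tfrac{1}{\sqrt{th}}(\log(t/h))^{1/2}$ are exactly the square roots of the two summands of $\overline\sigma^2$. The remaining terms ($\tfrac{1}{th}$, $\tfrac{1}{h}\e^{-\tilde\co_0 t}$, $\tfrac{1}{t^{3/4}\sqrt h}\log(t/h)$, $\tfrac{1}{\sqrt{th}}\{\sqrt 2 + 2t^{-1/4}\}$, and the lower-order log term) are all dominated by one of these two on the grid $\HH$, where $h\geq (\log t)^2/t$ (so $th\geq(\log t)^2$ and $h^{-1}\leq t$); for instance $\tfrac{1}{th}\leq \tfrac{1}{\sqrt{th}}\cdot\tfrac{1}{\sqrt{th}}\leq\tfrac{1}{\sqrt{th}}$ and the exponential term is crushed by $\tfrac1h\e^{-\tilde\co_0 t}\leq t\e^{-\tilde\co_0 t}=o(t^{-1/2})$. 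So $\K$ can be taken as a fixed multiple of $\co$ times a combinatorial constant. This part is routine bookkeeping.

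\textbf{Bias bound \eqref{eq:bias}.} Here the computation is the classical kernel-bias estimate, but one must first identify $\E_b[\overline\rho_{t,K}(h)]$ correctly. Using the decomposition $\d X_s = b(X_s)\d s + \d W_s$ in \eqref{est:derivative}, the stochastic-integral part is a mean-zero martingale (by stationarity and the fact that $K((x-\cdot)/h)$ is bounded with compact support, so the integrand is in $L^2$), hence
\[
\E_b[\overline\rho_{t,K}(h)(x)]\ =\ \frac{1}{th}\int_0^t \E_b\!\left[K\!\left(\tfrac{x-X_s}{h}\right)b(X_s)\right]\d s\ =\ \frac1h\int K\!\left(\tfrac{x-y}{h}\right)b(y)\rho_b(y)\,\d y\ =\ (K_h * (b\rho_b))(x),
\]
using stationarity $X_s\sim\mu_b$. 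Then the bias is $(K_h*(b\rho_b))(x) - (b\rho_b)(x)$. Since $b\in\Sigma(\beta,\LL)$ forces $\rho_b\in\HH_\R(\beta+1,\LL)$, and $b = (\rho_b'\cdot 1)'\!/\! \cdots$ — more directly, $b\rho_b = \rho_b'/2$ (from the stationary Fokker–Planck relation with $\sigma\equiv1$, i.e. $\rho_b' = 2b\rho_b$), so $b\rho_b = \rho_b'/2 \in \HH_\R(\beta,\LL)$ up to the constant. Wait — one should note $\|(\rho_b/2)^{(k)}\| = \|(\rho_b')^{(k)}\|/2\leq\LL/2$ and the top Hölder seminorm is $\leq\LL/2\cdot|s|^{\beta-\lfloor\beta\rfloor}$, so $b\rho_b\in\HH_\R(\beta,\LL/2)$. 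A standard Taylor expansion of $b\rho_b$ to order $\lfloor\beta\rfloor$, combined with the order-$\lfloor\alpha\rfloor\geq\lfloor\beta\rfloor$ vanishing-moment property of $K$ and $\supp(K)\subseteq[-1/2,1/2]$, kills all polynomial terms and leaves the remainder bounded by
\[
\frac{1}{\lfloor\beta\rfloor!}\int |K(v)|\,|vh|^{\beta-\lfloor\beta\rfloor}\cdot\LL/2\cdot|vh|^{\lfloor\beta\rfloor}\,\d v\ =\ h^\beta\,\frac{\LL}{2\lfloor\beta\rfloor!}\int|K(v)||v|^\beta\,\d v\ =\ B(h),
\]
uniformly in $x$ and in $b\in\Sigma(\beta,\LL)$. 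I'd write $[\beta]$ for $\lfloor\beta\rfloor$ to match the statement.

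\textbf{Main obstacle.} The only genuinely delicate point is justifying that the stochastic-integral contribution to $\E_b[\overline\rho_{t,K}(h)]$ is exactly zero — i.e. that $s\mapsto\int_0^s K((x-X_u)/h)\,\d W_u$ is a true martingale, not merely a local one — which follows from $\E_b\int_0^t K^2((x-X_u)/h)\,\d u = t\cdot h\int K^2(z)\rho_b(x-hz)\,\d z<\infty$ by boundedness of $K$ and $\rho_b$. Everything else (the variance bound from Proposition \ref{prop:csi}, the Taylor/vanishing-moments argument for the bias) is standard, and the uniformity over $\Sigma(\beta,\LL)$ is automatic since all constants ($\|\rho_b\|_\infty$, $\|K\|_{L^2}$, the Hölder constant $\LL$) are uniform over the class.
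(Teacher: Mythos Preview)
Your proposal is correct and follows the same route as the paper: the variance bound \eqref{eqvar} is obtained directly from Proposition~\ref{prop:csi} (the paper simply states this, while you carry out the term-by-term comparison of $\phi_{t,h}(2)$ with $\overline\sigma(h,t)$ on the grid $\HH$), and the bias bound \eqref{eq:bias} is the classical Taylor argument after identifying $\E_b[\overline\rho_{t,K}(h)]=K_h\ast(b\rho_b)=\tfrac12 K_h\ast\rho_b'$ via stationarity and the martingale property of the stochastic integral. The paper records the bias computation in the compressed form $\tfrac12\sup_x|\int K_h(x-y)(\rho_b'(y)-\rho_b'(x))\,\d y|\le B(h)$, citing \cite{gini09}; your expanded derivation, including the check that the Brownian part is a true martingale, fills in exactly the details the paper omits.
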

\begin{proof}
Assertion \eqref{eqvar} follows immediately from Proposition \ref{prop:csi}.
For the bias of $\overline\rho_{t,K}$, classical Taylor arguments imply that (see \cite{gini09})
\[
\sup_{b\in\Sigma(\beta,\mathcal L)}\|\E_b\left[\overline\rho_{t,K}(h)\right]-\rho_b b\|_\infty
\ =\ \frac{1}{2}\sup_{x\in\R} \left|\int_\R K_h(x-y)(\rho_b'(y)-\rho_b'(x))\d y\right|\ \leq\ B(h).
\]
\end{proof}

The next two auxiliary results give conditions which allow to translate upper and lower bounds on the $\sup$-norm risk of estimators of $\rho_b'$ into corresponding bounds on the weighted risk of drift estimators.

\medskip

\begin{lemma}[Weighted upper bounds for drift estimation]\label{lem:decomp}
Given $b\in\Sigma(\beta,\mathcal L)$, consider estimators $\rho_t$ and $\overline \rho_t$ of the invariant density $\rho_b$ and $\rho_b'/2$, respectively, fulfilling the following conditions:
\begin{itemize}
\item[$\operatorname{(E1)}$]	
$\exists C_1>0$ such that, for any $p\geq1$,
\[\sup_{b\in\Sigma(\beta,\mathcal L)}\E_b\left[\|\rho_t-\rho_b\|_\infty^{p}\right]\ \leq \ C_1^pt^{-\frac{p}{2}}\left(1+(\log t)\q +  p\q +p^pt^{-\frac{p}{2}}\right);\]
\item[$\operatorname{(E2)}$]	
$\exists C_2>0$ such that $\sup_{b\in\Sigma(\beta,\mathcal L)}\E_b\left[\|\overline\rho_t\|_\infty^2\right]\ \leq\ C_2t^2$;
\item[$\operatorname{(E3)}$]	
$\exists C_3>0$ such that 
$\sup_{b\in\Sigma(\beta,\mathcal L)}\E_b\left[\|\overline\rho_t-\rho_b'/2\|_\infty\right]\ \leq \ C_3 (\log t/t)^{\beta/(2\beta+1)}$.
\end{itemize}
Then, the drift estimator 
\[\widehat b_t(x)\ :=\ \frac{\overline\rho_t(x)}{\rho_t^\ast(x)},\qquad\text{ with } \rho_t^\ast(x):=\rho_t(x)+\sqrt{\frac{\log t}{t}}\exp\left(\sqrt{\log t}\right),\]
satisfies
\[\sup_{b\in\Sigma(\beta,\mathcal L)}\E_b\left[\|(\hat b_t-b)\ \rho_b^2\|_\infty\right]\ =\ O\left((\log t/t)^{\beta/(2\beta+1)}\right).\]
\end{lemma}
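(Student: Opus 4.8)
The plan is to exploit the identity $b\rho_b=\rho_b'/2$ (valid since $\sigma\equiv1$) together with the two uniform bounds $\|\rho_b\|_\infty\le\mathcal L$ and $\|b\rho_b\|_\infty=\tfrac12\|\rho_b'\|_\infty\le\mathcal L/2$, both holding for every $b\in\Sigma(\beta,\mathcal L)$ by definition of the class. Writing $\kappa_t:=\sqrt{\log t/t}\,\exp(\sqrt{\log t})$ for the additive regularisation, $\delta_t:=\|\rho_t-\rho_b\|_\infty$, $\epsilon_t:=\|\overline\rho_t-\rho_b'/2\|_\infty$ and $r_t:=(\log t/t)^{\beta/(2\beta+1)}$, I would start from the decomposition
\[
(\widehat b_t-b)\rho_b^2\;=\;\frac{\rho_b^2}{\rho_t^\ast}\Bigl(\overline\rho_t-\tfrac{\rho_b'}{2}\Bigr)\;+\;\frac{b\rho_b^2}{\rho_t^\ast}(\rho_b-\rho_t)\;-\;\frac{b\rho_b^2}{\rho_t^\ast}\,\kappa_t .
\]
The only genuinely delicate factor is the random weight $\rho_b^2/\rho_t^\ast$: since $\rho_t\ge0$ it is always $\le\mathcal L^2/\kappa_t$, but this crude bound is far too large to be combined with $\overline\rho_t$, which (E2) controls only in second moment by $C_2t^2$. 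The idea is that this crude estimate is needed only on a super-polynomially rare event, while on its complement the weight can be controlled pointwise.

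Concretely, I would fix the event $\mathcal A_t:=\{\delta_t\le\kappa_t/2\}$ and, on $\mathcal A_t$, split $\R$ into $\{\rho_b\ge\kappa_t\}$ and $\{\rho_b<\kappa_t\}$. On the first set $\rho_t^\ast\ge\rho_t\ge\rho_b-\delta_t\ge\rho_b/2$, hence $\rho_b^2/\rho_t^\ast\le2\mathcal L$ and $\rho_b/\rho_t^\ast\le2$; on the second set $\rho_t^\ast\ge\kappa_t$ together with $\rho_b<\kappa_t$ gives $\rho_b^2/\rho_t^\ast<\kappa_t$ and $\rho_b/\rho_t^\ast\le2$. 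Plugging these into the three terms of the decomposition and using $\|b\rho_b\|_\infty\le\mathcal L/2$ bounds $\|(\widehat b_t-b)\rho_b^2\mathbf 1_{\mathcal A_t}\|_\infty$ by a fixed multiple of $\epsilon_t+\delta_t+\kappa_t+\kappa_t\epsilon_t$. Taking expectations, (E3) yields $\E_b\epsilon_t\lesssim r_t$, (E1) with $p=1$ yields $\E_b\delta_t\lesssim\sqrt{\log t/t}=o(r_t)$ (because $\tfrac12>\tfrac{\beta}{2\beta+1}$), and $\kappa_t$ as well as $\kappa_t\E_b\epsilon_t$ are $o(r_t)$ too, since $\kappa_t\to0$ and $\kappa_t/r_t\to0$ — the exponent gap $\tfrac1{2(2\beta+1)}\log t$ beats the sub-polynomial factor $\exp(\sqrt{\log t})$. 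This already gives the asserted rate on $\mathcal A_t$.

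On $\mathcal A_t^c$ I would use $(\widehat b_t-b)\rho_b^2=\rho_b^2\overline\rho_t/\rho_t^\ast-b\rho_b^2$ with $\|b\rho_b^2\|_\infty\le\mathcal L^2/2$ and $\rho_b^2/\rho_t^\ast\le\mathcal L^2/\kappa_t$, so that Cauchy--Schwarz and (E2) give $\E_b[\|(\widehat b_t-b)\rho_b^2\|_\infty\mathbf 1_{\mathcal A_t^c}]\lesssim\kappa_t^{-1}\sqrt{C_2}\,t\,\P_b(\mathcal A_t^c)^{1/2}+\P_b(\mathcal A_t^c)$. The threshold $\kappa_t/2$ dwarfs the typical size $\sqrt{\log t/t}$ of $\delta_t$, so applying (E1) with $p=\lceil\log t\rceil$ — for which all four terms in the moment bound are $\lesssim(C_1\sqrt{\log t/t})^p$ — and Markov's inequality yields $\P_b(\mathcal A_t^c)\lesssim\bigl(2C_1\e^{-\sqrt{\log t}}\bigr)^{\log t}=\exp\!\bigl(O(\log t)-(\log t)^{3/2}\bigr)$, which decays faster than every power of $t$. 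Since $\kappa_t^{-1}\le t^{1/2}$ and hence $\kappa_t^{-1}t\le t^{3/2}$, the whole $\mathcal A_t^c$-contribution is $O\bigl(t^{3/2}\exp(-\tfrac14(\log t)^{3/2})\bigr)=o(r_t)$. Combining the two events, and noting that every constant above depends only on $C_1,C_2,C_3,\mathcal L$ — hence is uniform over $\Sigma(\beta,\mathcal L)$ — completes the argument. The main obstacle is precisely this balancing act: the split must be arranged so that the rare event on which $\overline\rho_t$ may be of size $\sim t$ has probability decaying faster than any polynomial, while simultaneously, on the good event, the weight $\rho_b^2/\rho_t^\ast$ must be shown to be $\lesssim\kappa_t$ on exactly the region where $\rho_b$ is small; this forces the threshold to be chosen at the scale of the regularisation $\kappa_t$ rather than at the natural statistical scale $\sqrt{\log t/t}$.
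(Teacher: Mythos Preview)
Your proof is correct and follows essentially the same route as the paper: split into a good event where $\|\rho_t-\rho_b\|_\infty$ is at most of order $\kappa_t$, use the crude bound $\rho_t^\ast\ge\kappa_t$ together with (E2) and Cauchy--Schwarz on the bad event, and control $\rho_b/\rho_t^\ast$ pointwise on the good event. The paper simplifies your good-event analysis in one place: taking the threshold $\kappa_t$ (rather than $\kappa_t/2$) gives, on $B_t=\{\delta_t\le\kappa_t\}$, the direct bound $\rho_t^\ast=\rho_t+\kappa_t\ge\rho_b-\kappa_t+\kappa_t=\rho_b$, hence $\rho_b/\rho_t^\ast\le1$ uniformly in $x$ --- so your spatial split into $\{\rho_b\ge\kappa_t\}$ and $\{\rho_b<\kappa_t\}$ is unnecessary. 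The paper also uses $p=8\sqrt{\log t}$ rather than $p\asymp\log t$ in the Markov step, but either choice yields super-polynomial decay of $\P_b(B_t^c)$, which is all that is needed.
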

\begin{proof} 
For ease of notation, we refrain in the sequel from carrying the $\sup_{b\in\Sigma(\beta,\mathcal L)}$ along.
All arguments hold for the supremum because constants in the upper bounds do not depend on the specific choice of $b\in\Sigma(\beta,\mathcal L)$.
Introduce the set
\[
	B_t\ :=\ \left\{\sqrt t\|\rho_t-\rho_b\|_\infty\leq \sqrt{\log t}\exp(\sqrt{\log t})\right\},\quad t>\e.
\]
For any $p\geq 1$, Markov's inequality and condition (E1) imply that
\begin{align*}
\P_b(B_t\c)
&\leq\ \E_b\left[\|\rho_{t}-\rho_b\|_\infty^p\right]\cdot \left(\frac{t}{\log t}\right)\q\exp(-p\sqrt{\log t})~\\
&\leq\ C_1^pt^{-\frac{p}{2}}\left(1+(\log t)\q +  p\q + \left(p/\sqrt t\right)^p\right)\left(\frac{t}{\log t}\right)\q\exp\left(-p\sqrt{\log t}\right)\\
&\leq\ C_1^p \left((\log t)^{-\frac{p}{2}}+1+\left(\frac{p}{\log t}\right)\q+\left(\frac{p}{\sqrt{t\log t}}\right)^p\right)\exp\left(-p\sqrt{\log t}\right).
\end{align*}
Specifying $p=8\sqrt{\log t}$, one obtains that, for some positive constant $C$,
\[
	\P_b(B_t\c)\ \leq\ C^{8\sqrt{\log t}}\exp(-8\log t).
\]
Thus, on the event $B_t\c$,
\begin{align*}
&\E_b\left[\|(\hat b_t-b)\ \rho_b^2\|_\infty\ \mathds{1}_{B_t\c}\right]\\
&\quad\leq\ \left(\E_b\left[\|\hat b_t\rho_b^2\|_\infty^2\right]\ \P_b(B_t\c)\right)^{1/2}+\E_b\left[\|b\rho_b^2\|_\infty\cdot \mathds{1}_{B_t\c}\right]\\
&\quad\leq\ \sup_{x\in\R}|\rho_b^2(x)|\left(\E_b\left[\|\overline\rho_t\|_\infty^2\right]\ \frac{t}{\log t}\exp(-2\sqrt{\log t})\ \P_b(B_t\c)\right)^{1/2}+\E_b\left[\|b\rho_b^2\|_\infty\cdot \mathds{1}_{B_t\c}\right]\\
&\quad\leq\ \LL^2\left(\E_b\left[\|\overline\rho_t\|_\infty^2\right]\ \frac{t}{\log t}\exp(-2\sqrt{\log t})C^{8\sqrt{\log t}}\exp(-8\log t)\right)^{1/2} +\ \frac{1}{2}\LL^2\P_b(B_t\c)\\
&\quad=\ O\left(\sqrt{C^{8\sqrt{\log t}}\frac{t^3}{\log t}\exp(-8\log t)}+C^{8\sqrt{\log t}}t^{-8}\right)~=~O(t^{-1}).
\end{align*}
On the other hand, $\E_b\left[\|(\hat b_t-b)\ \rho_b^2\|_\infty\ \mathds{1}_{B_t}\right]\leq A_1+A_2$, for
\[
A_1\ :=\ \E_b\left[\Big\|\Big(\hat b_t-\frac{b\rho_b}{\rho_t^\ast}\Big)\ \rho_b^2\Big\|_\infty\cdot \mathds{1}_{B_t}\right],\qquad
A_2\ :=\ \E_b\left[\Big\|\Big(\frac{b\rho_b}{\rho_t^\ast}-b\Big)\ \rho_b^2\Big\|_\infty\cdot\mathds{1}_{B_t}\right].
\]
Since $\rho_b/\rho_t^\ast\leq 1$ on the event $B_t$, $\operatorname{(E1)}$ and $\operatorname{(E3)}$ imply that
\begin{align*}
A_1&\leq\ \sup_{x\in\R}|\rho_b(x)|\ \E_b\left[\|\overline\rho_t-\rho_b'/2\|_\infty\right]\ =\ O\left(\left(\frac{\log t}{t}\right)^{\beta/(2\beta+1)}\right),\\
A_2&=\ \E_b\left[\bigg\|\frac{\rho_b'\rho_b}{2\rho_t^\ast}(\rho_b-\rho_t^\ast)\cdot \mathds{1}_{B_t}\bigg\|_\infty\right]\ \leq\ 
\frac{\LL}{2}\left\{\E_b\left[\|\rho_b-\rho_{t}\|_\infty\right]+\left(\frac{\log t}{t}\right)^{1/2}\exp\left(\sqrt{\log t}\right)\right\}.
\end{align*}
Summing up,
\[
\E_b\left[\|(\hat b_t-b)\ \rho_b^2\|_\infty\right]\ \leq\ O\left(\frac{1}{t}\right)+O\left(\left(\frac{\log t}{t}\right)^{\frac{\beta}{2\beta+1}}\right)
+O\left(\sqrt{\frac{\log t}{ t}}\right)+O\left(\sqrt{\frac{\log t}{ t}}\e^{\sqrt{\log t}}\right),
\]
and the assertion follows.
\end{proof}

\medskip

\begin{lemma}[Weighted lower bounds for drift estimation]\label{lemma:lowdrift}
Assume that, for some $\psi_t$ fulfilling $\sqrt{\log t/t}=o(\psi_t)$, it holds 
\begin{equation}\label{condition:lowdrift}
\liminf_{t \to \infty} \inf_{\tilde{\partial\rho^2_t}}\sup_{b\in\Sigma(\beta,\mathcal L)}
\E_b\left[\psi_t^{-1}\|\tilde{\partial\rho_t^2} - (\rho_b^2)'\|_\infty\right]\ >\ 0.
\end{equation}
Then, 
\[\liminf_{t\to\infty}\inf_{\tilde b}\sup_{b\in\Sigma(\beta,\mathcal L)}\E_b\left[\psi_t^{-1}\left\|(\tilde b - b)\rho_b^2\right\|_\infty\right]\ >\ 0.\]
The infimum in the preceding inequalities is taken over all estimators $\tilde{\partial \rho_t^2}$ and $\tilde b$ of $(\rho_b^2)'$ and $b$, respectively.
\end{lemma}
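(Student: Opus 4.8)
The plan is to convert an arbitrary drift estimator $\widetilde b$ into an estimator of $(\rho_b^2)'$ and to appeal to \eqref{condition:lowdrift}. The algebraic link is $\rho_b'=2b\rho_b$, hence $(\rho_b^2)'=2\rho_b\rho_b'=4b\rho_b^2$, so $\|(\widetilde b-b)\rho_b^2\|_\infty=\tfrac14\|4\widetilde b\rho_b^2-(\rho_b^2)'\|_\infty$. Since every $b\in\Sigma(\beta,\LL)$ satisfies $|b(x)|\le\CC(1+|x|)$, we may assume without loss of generality that the competitor $\widetilde b$ satisfies $|\widetilde b(x)|\le\CC(1+|x|)$ for all $x$: projecting $\widetilde b(x)$ onto $[-\CC(1+|x|),\CC(1+|x|)]$ is pointwise distance-nonincreasing towards $b(x)$ and can only decrease $\|(\widetilde b-b)\rho_b^2\|_\infty$. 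As $4\widetilde b\rho_b^2$ is not admissible, I replace $\rho_b^2$ by the square of the kernel density estimator with the universal bandwidth and set $\widetilde{\partial\rho^2_t}:=4\widetilde b\,\big(\rho_{t,K}(t^{-1/2})\big)^2$, an estimator of $(\rho_b^2)'$. Writing $\rho_{t,K}$ for $\rho_{t,K}(t^{-1/2})$ in what follows,
\[
\big\|\widetilde{\partial\rho^2_t}-(\rho_b^2)'\big\|_\infty\ \le\ 4\,\big\|\widetilde b\,\big(\rho_{t,K}^2-\rho_b^2\big)\big\|_\infty\ +\ 4\,\big\|(\widetilde b-b)\rho_b^2\big\|_\infty ,
\]
and the whole proof reduces to showing that the first (``plug-in'') term, after multiplication by $\psi_t^{-1}$ and $\E_b$, tends to $0$ as $t\to\infty$, uniformly over $b\in\Sigma(\beta,\LL)$ and over the truncated estimators $\widetilde b$.

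Two uniform facts about the model drive the plug-in bound. First, $b(x)\operatorname{sgn}(x)\le-\gamma$ on $\{|x|>A\}$ together with $\rho_b\le\LL$ gives $\rho_b(x)\le\LL\e^{2\gamma A}\e^{-2\gamma|x|}$ for $|x|>A$, so $C_{\rho}:=\sup_{x}(1+|x|)\rho_b(x)<\infty$ with $C_\rho$ depending only on $\CC,A,\gamma,\LL$. Second, $K$ has support in $[-1/2,1/2]$, so $\rho_{t,K}(x)=0$ whenever $x$ is at distance more than $\tfrac12 t^{-1/2}\le\tfrac12$ from the path $\{X_u:0\le u\le t\}$; hence $\rho_{t,K}$ is supported in $[-M_t,M_t]$ with $M_t:=\sup_{0\le u\le t}|X_u|+1$, and elementary exit-time bounds for the ergodic diffusion of Definition~\ref{def:B} (using the uniform exponential tail $\mu_b(\{|x|>R\})\lesssim\e^{-\gamma R}$ and an occupation-time estimate, in the spirit of the computations in the proof of Lemma~\ref{lem:gaussexists}) give $\P_b(M_t>R)\lesssim t\e^{-\gamma R}$ and therefore $\E_b[M_t^{q}]^{1/q}=O(\log t)$ for every fixed $q$, uniformly over the class.

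Writing $\Delta_t:=\|\rho_{t,K}-\rho_b\|_\infty$ and using $|\widetilde b(x)|\le\CC(1+|x|)$ together with $|\rho_{t,K}+\rho_b|\le2\rho_b+\Delta_t$,
\[
\big\|\widetilde b\,\big(\rho_{t,K}^2-\rho_b^2\big)\big\|_\infty\ \le\ \CC\sup_{x}(1+|x|)\,|\rho_{t,K}-\rho_b|\,|\rho_{t,K}+\rho_b|\ \le\ 2\CC C_\rho\,\Delta_t\ +\ \CC\Big(\sup_{x}(1+|x|)|\rho_{t,K}-\rho_b|\Big)\Delta_t .
\]
By Proposition~\ref{prop:con_kernel_density_estimator} with bandwidth $t^{-1/2}$ (for which the bias term is of order $t^{-(\beta+1)/2}=o(t^{-1/2})$), $\E_b[\Delta_t^{q}]^{1/q}\lesssim\sqrt{\log t/t}$ for every fixed $q$, uniformly over $\Sigma(\beta,\LL)$; hence $\psi_t^{-1}\E_b[\Delta_t]=o(1)$, which handles the first summand. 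For the second, the two facts above yield $\sup_{x}(1+|x|)|\rho_{t,K}-\rho_b|\le2C_\rho+(1+M_t)\Delta_t$, because $\rho_{t,K}$ vanishes for $|x|>M_t$ and $(1+|x|)\rho_{t,K}(x)\le(1+|x|)\rho_b(x)+(1+M_t)\Delta_t$ for $|x|\le M_t$. Cauchy--Schwarz then gives
\[
\E_b\Big[\Big(\sup_{x}(1+|x|)|\rho_{t,K}-\rho_b|\Big)\Delta_t\Big]\ \le\ \Big(\E_b\big[(2C_\rho+(1+M_t)\Delta_t)^2\big]\Big)^{1/2}\big(\E_b[\Delta_t^2]\big)^{1/2},
\]
and since $\E_b[((1+M_t)\Delta_t)^2]^{1/2}\le\E_b[(1+M_t)^4]^{1/4}\E_b[\Delta_t^4]^{1/4}\lesssim(\log t)\sqrt{\log t/t}=o(1)$, the first factor is $O(1)$ while the second is $O(\sqrt{\log t/t})$; multiplying by $\psi_t^{-1}$ and using $\sqrt{\log t/t}=o(\psi_t)$ makes this $o(1)$, uniformly in $b$ and in $\widetilde b$. (The local time estimator $t^{-1}L_t^\bullet(X)$, which also has rate $\sqrt{\log t/t}$ by Lemma~\ref{lem:difflt} and is supported on the range of the path, could be used in place of $\rho_{t,K}$.)

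Collecting the above, there is $r_t\to0$, depending neither on $b$ nor on $\widetilde b$, with $\E_b[\psi_t^{-1}\|\widetilde{\partial\rho^2_t}-(\rho_b^2)'\|_\infty]\le4\E_b[\psi_t^{-1}\|(\widetilde b-b)\rho_b^2\|_\infty]+r_t$ for every $b\in\Sigma(\beta,\LL)$ and every truncated $\widetilde b$. Taking $\sup_{b}$, using $\sup_b(f_b-g_b)\ge\sup_b f_b-\sup_b g_b$, and noting that $\widetilde{\partial\rho^2_t}$ ranges over a subclass of all estimators of $(\rho_b^2)'$, one obtains for every $\widetilde b$
\[
4\sup_{b\in\Sigma(\beta,\LL)}\E_b\big[\psi_t^{-1}\|(\widetilde b-b)\rho_b^2\|_\infty\big]\ \ge\ \inf_{\widetilde{\partial\rho^2_t}}\sup_{b\in\Sigma(\beta,\LL)}\E_b\big[\psi_t^{-1}\|\widetilde{\partial\rho^2_t}-(\rho_b^2)'\|_\infty\big]\ -\ r_t ;
\]
taking $\inf$ over $\widetilde b$ on the left and then $\liminf_{t\to\infty}$, the right-hand side stays positive by \eqref{condition:lowdrift}, which is the assertion. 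The one genuinely delicate step is the plug-in bound: one must ensure that the weight $1+|x|$ created by the unbounded drift does not blow up in the tails, and the resolution is that the kernel density estimator is supported on a window of radius $O(\log t)$ while $\rho_b$ decays exponentially, so the weighted estimation error grows only by logarithmic factors, which are absorbed by the Cauchy--Schwarz splitting and by the slack $\sqrt{\log t/t}=o(\psi_t)$.
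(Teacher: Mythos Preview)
Your argument follows the paper's overall strategy---truncate $\widetilde b$, build from it an estimator of $(\rho_b^2)'$, and show the plug-in error is $o(\psi_t)$---but differs at one crucial point. You take $\widetilde{\partial\rho^2_t}=4\widetilde b\,\rho_{t,K}^2$, a \emph{bona fide} estimator. The paper instead writes the comparison in terms of $4\widetilde b\,\rho_{t,K}\,\rho_b$ and calls this ``an estimator of $(\rho_b^2)'$''; strictly speaking it is not, since it depends on the unknown $\rho_b$, so the hypothesis \eqref{condition:lowdrift} cannot be directly invoked for it. What the paper's choice buys is that the remainder reduces to $\|\widetilde b\rho_b(\rho_{t,K}-\rho_b)\|_\infty$, which is immediately $O(\sqrt{\log t/t})$ because $\sup_x(1+|x|)\rho_b(x)<\infty$ uniformly over the class. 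Your honest choice forces you to handle $\|\widetilde b(\rho_{t,K}+\rho_b)(\rho_{t,K}-\rho_b)\|_\infty$, and since $\rho_{t,K}$ does not inherit the exponential tail decay of $\rho_b$, you need the compact-support observation and the moment bound on $M_t=\sup_{s\le t}|X_s|$.

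The step that deserves more care is precisely that moment bound $\E_b[M_t^q]^{1/q}=O(\log t)$. The claim is correct for this class of diffusions (a Lyapunov argument with $V(x)=\e^{\gamma|x|}$, or comparison with a process having drift $-\gamma\operatorname{sgn}(x)$ outside $[-A,A]$, yields $\P_b(M_t>R)\lesssim t\e^{-cR}$), but it is not established anywhere in the paper; the maximal inequality quoted in Appendix~C only gives $M_t=O_\P(t)$, which would be useless here. You actually only need $\E_b[M_t^4]^{1/4}=o\big(\sqrt{t/\log t}\big)$, so there is ample slack, but the justification you offer (``in the spirit of Lemma~\ref{lem:gaussexists}'') is too vague to stand on its own. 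Once this is filled in, your proof is complete---and in fact more rigorous than the paper's at the point where the latter slips the unknown $\rho_b$ into its ``estimator''.
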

\begin{proof}
Given any estimator $\tilde b$ of the drift coefficient $b$, define
\begin{align*}
\bar b(x)&:=\ 	\begin{cases} 	\tilde b(x), &\text{if } |\tilde b(x)|\leq \C(1+|x|)\\ 
								\text{sgn}(b(x)) \C (1+|x|), &\text{otherwise}
				\end{cases}, \qquad x\in\R.
\end{align*}
For any $b\in\Sigma(\beta,\mathcal L)$, it holds $|b(x)|\leq \C (1+|x|)$ for all $x\in\R$. Consequently,
\[
\E_b\left[\left\|(\bar b - b)\ \rho^2_b\right\|_\infty\right]\ \leq\ \E_b\left[\big\|(\tilde b - b)\ \rho^2_b\big\|_\infty\right].\]
It thus suffices to consider the infimum over all estimators $\tilde b$ satisfying 
\begin{equation}\label{estimatoratmostlinear}
|\tilde b(x)|\leq \C (1+|x|),\quad x\in\R.
\end{equation}
In view of the decomposition
\[
\tilde b\rho_{t,K}(t^{-\frac{1}{2}})\rho_b - \frac{1}{2}\rho_b'\rho_b\ =\ \frac{1}{2}\rho_b\left(2\tilde b\rho_b-\rho_b'\right) + \tilde b\rho_b\left(\rho_{t,K}(t^{-\frac{1}{2}})-\rho_b\right),
\]
it holds
\[
\E_b\left[\big\|(\tilde b - b)\ \rho_b^2\big\|_\infty\right]\ =\ \E_b\left[\big\|\frac{1}{2}\rho_b\left(2\tilde b\rho_b-\rho_b'\right)\big\|_\infty\right]\ \geq\ (\I) - (\II),
\]
with
\[
(\I):=\frac{1}{4}\E_b\left[\big\|4\tilde b\rho_{t,K}(t^{-\frac{1}{2}})\rho_b - 2\rho_b'\rho_b\big\|_\infty\right],\qquad
(\II):=\E_b\left[\big\|\tilde b\rho_b\left(\rho_{t,K}(t^{-\frac{1}{2}})-\rho_b\right)\big\|_\infty\right].
\]
Due to \eqref{estimatoratmostlinear}, $\sup_{b\in\Sigma(\beta,\mathcal L)}|\tilde b\rho_b|$ is bounded. 
Moreover, we can infer from Proposition \ref{prop:con_kernel_density_estimator} that there exists a positive constant $C_1$ such that, for all $t$ sufficiently large, 
$(\II)\ \leq \ C_1 \sqrt{\log t/t}$. 
Consequently, noting that $4\tilde b\rho_{t,K}(t^{-\frac{1}{2}})\rho_b$ can be viewed as an estimator of $2\rho'_b\rho_b=(\rho^2_b)'$, 
\begin{align*}
\liminf_{t\to\infty}\inf_{\tilde b}\sup_{b\in\Sigma(\beta,\mathcal L)}\E_b\left[\psi_t^{-1}\left\|(\tilde b - b)\ \rho_b^2\right\|_\infty\right]
&\geq\ \liminf_{t\to\infty}\inf_{\tilde b}\sup_{b\in\Sigma(\beta,\mathcal L)} \psi_t^{-1}((\I)-(\II))\\
&\geq\ \liminf_{t\to\infty}\inf_{\tilde b}\sup_{b\in\Sigma(\beta,\mathcal L)} \left(\psi_t^{-1}(\I) - C_1 \psi_t^{-1}\sqrt{\frac{\log t}{ t}}\right)\\
&>\ 0.
\end{align*}
\end{proof}

Proposition \ref{prop:csi} presented in Section \ref{sec:pre} gives one first result on the concentration behaviour of the estimator $\overline{\rho}_{t,K}(h)$ of $\rho_b'/2$.
It follows from a straightforward application of the developments in \cp. 
Note that, for any bandwidth $h=h_t\in(t^{-1}(\log t)^2,\ (\log t)^{-3})$ and any $u=u_t\in[1,\alpha\log t]$, $\alpha>0$, the function $\phi_{t,h}(u)$ introduced in \eqref{phith} fulfills
\[\phi_{t,h}(u)\ =\ \Upsilon_1\sqrt{\frac{\log(ut/h)}{th}}+
\Upsilon_2\sqrt{\frac{u}{th}}+o\left(\sqrt{\frac{\log(ut/h)}{th}}\right).\]
For the construction of an adaptive estimation procedure which yields rate-optimal drift estimators, we need to specify the constants $\Upsilon_1$ and $\Upsilon_2$.
The subsequent Lemma provides the corresponding result. Its proof relies on a modification of the proof of Theorem 18 in \cp.

\medskip

\begin{lemma}[Tail bounds with explicit constants]\label{lem:erg_prop:csi}
Grant the assumptions of Proposition \ref{prop:csi}, and define the estimator $\overline\rho_{t,K}$ according to \eqref{est:derivative}.
Then, for any $h=h_t\in \left(t^{-1}(\log t)^2, (\log t)^{-3}\right)$, $1\leq u=u_t\leq \alpha\log t$, for some $\alpha>0$, and $t$ sufficiently large, it holds
\begin{equation}\label{con_stoch_int_special}
\P_b\left(\sup_{x\in\R}|\overline\rho_{t,K}(h_t)(x)-\E_b\left[\overline \rho_{t,K}(h_t)(x)\right]|>\e\overline\psi_{t,h_t}(u_t)\right)\ \leq\ \e^{-u_t},
\end{equation}
with 
\[
\overline\psi_{t,h}(u):=\sqrt{\|\rho_b\|_\infty}\left\{2\overline\eta_1 \sqrt{\frac{\log(ut/h)}{th}}+\overline\eta_2\sqrt{\frac{u}{th}}\right\},
\]
for $\overline\eta_1=24 \tilde C_2\|K\|_{L^2(\lebesgue)}\bdg\e\sqrt v$ and $\overline\eta_2=12\bdg\|K\|_{L^2(\lebesgue)}$.
\end{lemma}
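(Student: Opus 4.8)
The plan is to revisit the proof of Theorem 18 in \cp\ and track the numerical constants through each term of the bound, rather than absorbing them into a single generic constant $\co$. The starting point is the same reduction used in the proof of Proposition \ref{prop:csi}: we apply the concentration inequality for stochastic integrals to the function class $\mathcal F=\{K((x-\cdot)/h)\colon x\in\Q\}$ with envelope $\|K\|_\infty$, $L^2(\lebesgue)$-norm bounded by $\sqrt h\|K\|_{L^2(\lebesgue)}$, support Lebesgue measure at most $h$, and covering numbers $N(\epsilon,\mathcal F,\|\cdot\|_{L^2(\Q)})\leq(\mathbbm A/\epsilon)^v$. The difference is that we now keep the leading-order terms explicit. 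The dominant contributions in $\phi_{t,h}(u)$, once we restrict the bandwidth to $h\in(t^{-1}(\log t)^2,(\log t)^{-3})$ and $u\leq\alpha\log t$, are the two terms of order $\sqrt{\log(ut/h)/(th)}$ and $\sqrt{u/(th)}$; all remaining terms in \eqref{phith} — namely those of order $t^{-1/2}(\log(ut/h))^{3/2}$, $u^{3/2}t^{-1/2}$, $u/(th)$, $h^{-1}\exp(-\tilde\co_0 t)$, $t^{-3/4}h^{-1/2}\log(ut/h)$, and $u\,t^{-1/4}(th)^{-1/2}$ — are $o(\sqrt{\log(ut/h)/(th)})$ under these constraints, which is the content of the remark preceding the lemma. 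So the work is to identify the constants multiplying the two surviving terms.

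First I would locate, inside the chaining argument of Theorem 18 in \cp, the place where the martingale approximation produces a Gaussian-type tail: the quadratic variation of the approximating martingale is controlled by a multiple of $\|\rho_b\|_\infty$ times the $L^2(\lebesgue)$-norm squared of the indexing function, which here is $h\|K\|_{L^2(\lebesgue)}^2$. This is the origin of the factor $\sqrt{\|\rho_b\|_\infty}\,\|K\|_{L^2(\lebesgue)}/\sqrt{th}$ common to both terms. The $\log(ut/h)$ term arises from the entropy integral $\int_0^{\V}\sqrt{\log N(\epsilon,\mathcal F,\|\cdot\|_{L^2(\Q)})}\,\d\epsilon$, which, using the polynomial covering bound with exponent $v$, contributes a factor proportional to $\sqrt v$; tracking the absolute constants in the generic chaining bound and in the conversion from a fixed confidence level to the $\e^{-u}$ form yields the constant $24\,\tilde C_2\,\bdg\,\e$ in $\overline\eta_1$. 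The pure-$u$ term $\sqrt{u/(th)}$ comes from the Bernstein-type deviation term in the tail inequality (the variance proxy times $u$), with constant $12\,\bdg$ in $\overline\eta_2$. Here $\tilde C_2$ and $\bdg$ are the universal constants from the relevant lemmas of \cp\ that I would simply carry forward by name.

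The main obstacle I expect is bookkeeping: ensuring that every constant inflation incurred along the way — splitting the supremum over $\Q$ versus $\R$, the passage through the generic chaining bound, the union over dyadic scales in the peeling argument, and the final exponentiation of the tail — is accounted for so that the announced constants $\overline\eta_1=24\tilde C_2\|K\|_{L^2(\lebesgue)}\bdg\e\sqrt v$ and $\overline\eta_2=12\bdg\|K\|_{L^2(\lebesgue)}$ are in fact correct (or at least valid upper bounds), and that nothing hidden in an $O(\cdot)$ in the original proof secretly depends on $h$, $u$ or $t$ in a way that spoils the $o(\cdot)$ claim. A secondary point is verifying the $o$-estimates term by term under the stated ranges of $h$ and $u$; for instance $t^{-1/2}(\log(ut/h))^{3/2}=o(\sqrt{\log(ut/h)/(th)})$ reduces to $\log(ut/h)=o(1/h)$, which holds since $h<(\log t)^{-3}$ and $\log(ut/h)\lesssim\log t$, and the remaining verifications are of the same elementary nature. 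Once the constants are pinned down and the lower-order terms discarded, \eqref{con_stoch_int_special} follows from the second inequality in \eqref{con_stoch_int} by replacing $\phi_{t,h}(u)$ with the larger quantity $\overline\psi_{t,h}(u)$ for $t$ large enough.
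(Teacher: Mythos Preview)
Your proposal is correct and follows essentially the same route as the paper: go back into the martingale-approximation/chaining proof behind Proposition \ref{prop:csi}, use the BDG inequality together with the occupation-times formula and the bound $\|t^{-1}L_t^\bullet\|_\infty\leq\sqrt{\|\rho_b\|_\infty}+\text{(centred local-time deviation)}$ to make the factor $\sqrt{\|\rho_b\|_\infty}$ explicit in front of the two dominant terms, track the entropy and chaining constants to obtain $\overline\eta_1,\overline\eta_2$, and then absorb every remaining term as $o\big(\sqrt{\log(ut/h)/(th)}\big)$ on the stated range of $h$ and $u$. The only detail you do not anticipate is that the local-time centering produces one additional intermediate term of order $(th)^{-1/2}(\log t/t)^{1/4}\sqrt u$ (so the refined bound $\tilde\phi_{t,h}$ is not literally $\phi_{t,h}$ with explicit constants), but this term is also $o\big(\sqrt{\log(ut/h)/(th)}\big)$ in the given range and hence harmless for your conclusion.
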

\begin{proof}
Let us first prove that there exist constants $\co_1,\tilde\co_0$ such that, for any $u,p\geq 1$, $h\in (0,1)$, $t\geq 1$,
\begin{align}
\begin{split}\label{lem:con_stoch_int}
\left(\E_b\left[\|\overline{\rho}_{t,K}(h)-\E_b\left[\overline{\rho}_{t,K}(h)\right]\|_\infty^p\right]\right)\p& \leq\ \tilde \phi_{t,h}(p),\\
\P_b\left(\sup_{x\in\R}|\overline\rho_{t,K}(h)(x)-\E_b\overline \rho_{t,K}(h)(x)|>\e\tilde\phi_{t,h}(u)\right)& \leq\ \e^{-u},
\end{split}
\end{align}
for 
\begin{align*}
\tilde\phi_h(u)\ &:=\ \co_1 \Bigg\{ 
\frac{1}{\sqrt t}\Big\{\left(\log \left(\frac{ut}{h}\right)\right)^{3/2}+\left(\log\left(\frac{ut}{h}\right)\right)^{1/2}+u^{3/2}\Big\}+\frac{u}{th}+  \frac{1}{h} \exp\left(-\tilde\co_0 t \right)\\
&\qquad+ \frac{1}{t^{3/4}\sqrt h}\log\left(\frac{ut}{h}\right)+\frac{1}{\sqrt{th}}  \left\{\left(\frac{\log t}{t}\right)^{1/4}\sqrt u + \frac{u}{t^{1/4}}    \right\}\Bigg\}\\
&\qquad + \,\,\overline\eta_1\sqrt{\|\rho_b\|_\infty}\frac{1}{\sqrt{th}}\left(\log\left(\frac{ut}{h}\right)\right)^{1/2} + \overline\eta_2 \sqrt{\|\rho_b\|_\infty}\left(\frac{u}{th}\right)^{1/2}.
\end{align*}
This statement parallels assertion \eqref{con_stoch_int} from Proposition \ref{prop:csi}.
It remains however to identify the constants preceding the terms $(th)^{-1/2}\left(\log(ut/h)\right)^{1/2}$ and $(u/th)^{1/2}$ as $\sqrt{\|\rho_b\|_\infty}\overline\eta_1$ and $\sqrt{\|\rho_b\|_\infty}\overline\eta_2$, respectively.
This requires to look into the details of the proof of Theorem 5 in \cp. 
In our situation, $\mathcal F$ is definded as in \eqref{functionclass}. 
First, note that the term  $\frac{1}{\sqrt{th}}\left(\log\left(\frac{ut}{h}\right)\right)^{1/2}$ comes from the analysis of the martingale $(th)^{-1}\int_0^t K\left(\frac{y-X_s}{h}\right)\d W_s$. 
We repeat the arguments from the proof of Theorem 5 in \cp, here, to discover the required constant. 
The Burkholder--Davis--Gundy inequality as stated in Proposition 4.2 in \cite{bayo82} and the occupation times formula yield, for any $p\geq2$, $f\in\FF$, 
\begin{align*}
&\left(\E_b\left[\left|\frac{1}{\sqrt t}\int_0^tf(X_s)\d W_s\right|^p\right]\right)^{\frac{1}{p}}\\
&\quad\leq\ \bdg \sqrt p\left(\E_b\left[\left(\frac{1}{t}\int_0^tf^2(X_s)\d s\right)^{\frac{p}{2}}\right]\right)^{\frac{1}{p}}\\
&\quad\leq\ \bdg \sqrt p\|f\|_{L^2(\lebesgue)}\left(\E_b\left[\left(\|t^{-1} L_t^\bullet(X)\|_\infty\right)^{\frac{p}{2}}\right]\right)^{\frac{1}{p}}\\
&\quad\leq\ \bdg \sqrt p\|f\|_{L^2(\lebesgue)}\left\{\left(\E_b\left[\left(\|t^{-1}L_t^\bullet(X)-\rho_b\|_\infty\right)^{\frac{p}{2}}\right]\right)^{\frac{1}{p}} + \sqrt{\|\rho_b\|_\infty}\right\},
\end{align*}
where $\bdg:=\sqrt 2\max\left\{1,\overline{\bdg}\right\}$, for $\overline{\bdg}$ denoting a universal constant from the BDG inequality.
Exploiting the result on centred diffusion local time from Lemma \ref{lem:difflt}, one can deduce that there exists another constant $\co_2>0$ such that, for $\V=\sqrt h\|K\|_{L^2(\lebesgue)}$,
\begin{align*}
&2\sup_{f\in\FF}\left(\E_b\left[\Big|\frac{1}{\sqrt t}\int_0^t f(X_s)\d W_s\Big|^p\right]\right)\p\\
&\quad\leq\ 2\bdg\sqrt p\mathbb V\left(\sqrt{\|\rho_b\|_\infty} + \co_2\left(\left(\frac{\log t}{t}\right)^{1/4} + \left(\frac{p}{t}\right)^{1/4} + \sqrt{\frac{p}{t}}\right)\right)\\
&\quad\leq\ 2\bdg\mathbb V\left(\sqrt{\|\rho_b\|_\infty} + \co_2\left(\frac{\log t}{t}\right)^{1/4}  \right) \sqrt p+4t^{-1/4}\bdg\mathbb V\co_2p.
\end{align*}
Using similar arguments, the latter is verified for $1\leq p<2$.
Furthermore, for any $f,g\in\FF$, it can be shown analogously that
\begin{align*}
\left(\E_b\left[\Big|\frac{1}{\sqrt t}\int_0^t (f-g)(X_s)\d W_s\Big|^p\right]\right)\p
&\leq\ 2\bdg\|f-g\|_{L^2(\lebesgue)}\\
&\quad\times\left\{\sqrt p \left(\sqrt{\|\rho_b\|_\infty} + \co_2\left(\frac{\log t}{t}\right)^{1/4}\right)+\frac{2\co_2p}{t^{1/4}}\right\}.
\end{align*}
Applying the generic chaining method and the localisation procedure as in \cp, one can then verify that
\begin{align*}
&\frac{1}{\sqrt t}\left(\E_b\left[\left\|\int_0^t K\left(\frac{\cdot-X_s}{h}\right)\d W_s\right\|_\infty^p\right]\right)\p\\
&\quad \leq\ \frac{\tilde C_1}{t^{1/4}}\sum_{k=0}^{\infty} \int_0^\infty \log N(u,\mathcal F_k, 4\bdg\co_2\e\|\cdot\|_{\L^2(\lebesgue)})\d u\,\,\e^{-\frac{k}{2}}\\
&\quad\quad+ 
\tilde C_2\sum_{k=0}^{\infty} \int_0^\infty \sqrt{\log N\left(u,\mathcal F_k, 2\bdg\Big(\sqrt{\|\rho_b\|_\infty}+\co_2\left(\frac{\log t}{t}\right)^{1/4}\Big) \e\|\cdot\|_{\L^2(\lebesgue)}\right)}\d u\,\,\e^{-\frac{k}{2}}\\
&\quad\quad+  
12\bdg\V\left(\sqrt{\|\rho_b\|_\infty} + \co_2\left(\frac{\log t}{t}\right)^{1/4}\right) \sqrt p + \frac{24\bdg\V\co_2p}{t^{1/4}}\\
&\quad\leq\ \frac{6\tilde C_1}{t^{1/4}}v\V\ 4\bdg\co_2\e\left(1+\log\left(\frac{\mathbbm A}{\V}\sqrt{\mathcal S + p\Lambda t}\right)\right)+ \mathbf{(A)}+\mathbf{(B)}+\frac{24\bdg\V\co_2p}{t^{1/4}},
\end{align*}
with 
\begin{align*}
\mathbf{(A)}&:=\ 
12\tilde C_2\V \ 2\bdg\left(\sqrt{\|\rho_b\|_\infty}+\co_2\left(\frac{\log t}{t}\right)^{1/4}\right)\e\sqrt{v\log\left(\frac{\mathbbm A}{\V}\sqrt{\mathcal S+p\Lambda t}\right)},\\
\mathbf{(B)}&:=\ 12\bdg\V\left(\sqrt{\|\rho_b\|_\infty}+\co_2\left(\frac{\log t}{t}\right)^{1/4}\right)\sqrt p
\end{align*}
for $\mathbbm A$ and $\mathcal S$ defined as in the proof of Proposition \ref{prop:csi} and some positive constant $\Lambda$.
For some further positive constants $\co_3,\co_4$, we can then upper bound
\begin{align*}
\frac{\mathbf{(A)}}{\sqrt t h} 
&\leq\ \co_3\left(\frac{1}{\sqrt{th}}+\frac{(\log(pt/h))^{3/4}}{t^{3/4}\sqrt h}\right)+\overline\eta_1\sqrt{\|\rho_b\|_\infty\ \frac{\log(pt/h)}{th}},\\
\frac{\mathbf{(B)}}{\sqrt t h} 
&\leq\ \co_4\frac{1}{\sqrt{th}}\left(\frac{\log t}{t}\right)^{1/4}\sqrt p+\overline \eta_2\sqrt{\|\rho_b\|_\infty}\sqrt{\frac{p}{th}}.
\end{align*}
Thus, with regard to the upper bound for $\left(\E_b\left[\|\overline{\rho}_{t,K}(h_t)-\E_b\left[\overline{\rho}_{t,K}(h_t)\right]\|_\infty^p\right]\right)\p$,
the constants preceding the terms $(th)^{-1/2}\sqrt{\log(pt/h)}$ and $\sqrt{p/(th)}$ are identified as $\sqrt{\|\rho_b\|_\infty}\overline\eta_1$ and $\sqrt{\|\rho_b\|_\infty}\overline\eta_2$, respectively. 

Furthermore, we obtained the additional expression $(th)^{-1/2}(\log t/t)^{1/4} \sqrt p$, and assertion \eqref{lem:con_stoch_int} follows.
In order to show \eqref{con_stoch_int_special}, note finally that in case of 
$h_t\in \left((\log t)^2t^{-1},(\log t)^{-3}\right)$ and $1\leq u_t\leq \alpha\log t$, $\alpha>0$, it holds 
\[
\tilde\phi_{t,h_t}(u_t)\ =\ o\left(\frac{1}{\sqrt{th_t}}\sqrt{\log\left(\frac{u_tt}{h_t}\right)}\right) + \overline\eta_1\sqrt{\|\rho_b\|_\infty}\frac{1}{\sqrt{th_t}}\sqrt{\log\left(\frac{u_t t}{h_t}\right)}+\overline\eta_2 \sqrt{\|\rho_b\|_\infty\cdot \frac{u_t}{th_t}}.
\]
Thus, $\tilde\phi_{t,h_t}(u_t)\leq \overline\psi_{t,h_t}(u_t)$ for $t$ sufficiently large.
\end{proof}

\begin{remark}\label{rem:const}
We shortly comment on the constants appearing in the definition of $\overline\eta_1$ and $\overline\eta_2$ in Lemma \ref{lem:erg_prop:csi}.
The constant $\bdg$ is defined as $\bdg=\sqrt 2\max\left\{1,\overline{\bdg}\right\}$, for $\overline{\bdg}$ denoting the universal constant from the Burkholder--Davis--Gundy inequality as stated in Proposition 4.2 in \cite{bayo82}. 
The constant $\tilde C_2$ originates from the application of Proposition \ref{thm:dirk} and can be specified looking into the details of the proof in \cite{dirk15}. 
The constant $v$ is associated with the entropy condition on $\mathcal F$ defined as in $\eqref{functionclass}$ (see Lemma 23 in \cp).
\end{remark}

\subsection{Proof of main results}

This section contains the proof of the upper and lower bound results on minimax optimal drift estimation wrt $\sup$-norm risk presented in Section \ref{sec:adapt_einfach}  and Section \ref{sec:sim}.

\begin{proof}[Proof of Theorem \ref{theo:est}]
In order to prove \eqref{21}, it suffices to investigate the estimator $\overline \rho_{t,K}(\tri h_t)$ since conditions (E1) and (E2) from Lemma \ref{lem:decomp} are satisfied. 
Indeed, the first condition refers to the rate of convergence of the invariant density estimator, and it is fulfilled by the kernel density estimator $\rho_{t,K}$ with bandwidth $t^{-1/2}$ due to Proposition \ref{prop:con_kernel_density_estimator}.
Note that $b\in\Sigma(\beta,\mathcal L)$ in the current paper refers to H\"older continuity of $\rho_b$ with parameter $\beta + 1$ in contrast to the notation in \cp.
With regard to (E2), Lemma \ref{biasandvariance} implies that
\begin{align*}
\sup_{b\in\Sigma(\beta,\LL)}\E_b\left[\|\overline{\rho}_{t,K}(\tri h_t)\|^2_\infty\right]
&\leq \sup_{b\in\Sigma(\beta,\LL)}\Big(4\E_b\left[\|\overline{\rho}_{t,K}(\tri h_t)-\E_b\left[\overline{\rho}_{t,K}(\tri h_t)\right]\|^2_\infty\right]\\
&\hspace*{3em}  + 
4\|\E_b\left[\overline{\rho}_{t,K}(\tri h_t)\right]-b\rho_b\|^2_\infty + 4\|b\rho_b\|_\infty^2\Big)\\
&\lesssim\ \overline\sigma^2(h_{\min},t)+B(1) + 1\ =\ O(1)\quad \text{ as } t\to\infty,
\end{align*}
since $1>\tri h_t\geq h_{\min}=\min\left\{h_k\in\mathcal H\colon k\in \N\right\} > (\log t)^2/t$ and $\|\rho_bb\|_\infty = \|\rho'_b\|_\infty/2\leq \mathcal L$. Hence, (E2) is satisfied.
Consequently, \eqref{21} will follow once we have verified condition (E3) from Lemma \ref{lem:decomp}, i.e., by showing that
\[\sup_{b\in\Sigma(\beta,\LL)} \E_b\left[\|\overline \rho_{t,K}(\tri h_t)-\rho_b'/2\|_\infty\right]
\ =\ O\left(\left(\frac{\log t}{t}\right)^{\frac{\beta}{2\beta+1}}\right).\]
For $C=C(K)$ introduced in \eqref {tilm}, let $M:=C\|\rho_b\|_\infty$, and define $\overline h_{\rho}:=h(\rho_b')$ as
\[
	\overline h_{\rho}\ :=\ \max\left\{h\in\mathcal{H}\colon B(h)\ \leq\ \frac{\sqrt{0.8M}}{4}\ \overline\sigma(h,t)\right\}.
\]
For the given choice of $\overline h_{\rho}$, it holds $B(\overline h_{\rho})\ \simeq\ \sqrt{0.8M}\overline\sigma(\overline h_{\rho},t)/4$, and, since $\mathcal{H}\ni\overline h_{\rho}> (\log t)^2/t$, 
\begin{equation}\label{rate_h_rho}
\overline h_{\rho}^{2\beta+1}\simeq(\log t/t)\quad\text{ and }\quad\overline\sigma(\overline h_{\rho},t)\simeq (\log t/t)^{\frac{\beta}{2\beta+1}}.
\end{equation}
To see this, first note that, for $h_0:=(\log t/t)^{\frac{1}{2\beta +1}}$, there exists some positive constant $L$ such that $B(h_0)\leq L\overline\sigma(h_0,t)$ and $\overline\sigma^2(h_0,t)\eqsim(\log t/t)^{\frac{2\beta}{2\beta +1}}$.
In particular, we have $\overline h_\rho\gtrsim h_0$ which is clear by definition of $\overline h_\rho $ in case that $L\leq \sqrt{0.8M}/4$. 
Otherwise, this follows from the fact that, for any $0<\lambda<1$,
\[B(\lambda h_0)=\lambda^\beta B(h_0)\leq \lambda^\beta L\overline\sigma(h_0,t)\leq \lambda^\beta L\overline\sigma(\lambda h_0,t).\]
For the validity of \eqref{rate_h_rho}, it remains to show that $\overline h_\rho\lesssim h_0$. 
We prove that $\overline h_\rho^{2\beta +1 }h_0^{-(2\beta + 1)}= O(1)$. 
By definition, we have
\begin{equation}\label{comment2}
\overline h_\rho^{2\beta+1}\ \eqsim\ \frac{\overline h_\rho}{t}\ \left(\log\left(\frac{t}{\overline h_\rho}\right)\right)^3 + \frac{1}{t}\log\left(\frac{t}{\overline h_\rho}\right) \
\lesssim\ \frac{\overline h_\rho}{t}\left(\log t\right)^3 + \frac{\log t}{t},
\end{equation}
since, for any $h\in\mathcal H$, $h\geq (\log t)^2/t$.
This implies that
$\overline h_\rho^{2\beta +1}h_0^{-(2\beta + 1)}\lesssim \overline h_\rho (\log t)^2 + 1$.
Again exploiting \eqref{comment2}, we deduce that
\begin{align*}
\overline h_\rho (\log t)^2
&\lesssim\ (\log t)^2\left(\frac{\overline h_\rho}{t}(\log t)^3 + \frac{\log t}{t}\right)^{\frac{1}{2\beta + 1}}\\
&\lesssim\ (\log t)^2t^{-\frac{1}{2\beta+1}}\left(\overline h_\rho(\log t)^3 + \log t\right)^{\frac{1}{2\beta + 1}}\\
&\lesssim\ (\log t)^2t^{-\frac{1}{2\beta + 1}}\left((\log t)^3 + \log t  \right)^{\frac{1}{2\beta + 1}}\ =\ o(1).
\end{align*}
Thus, $\overline h_\rho^{2\beta +1 }h_0^{-(2\beta + 1)}= O(1)$, and we have shown \eqref{rate_h_rho}.

\medskip

\textbf{Case 1:}
We first consider the situation where $\{\tri h_t\geq \overline h_{\rho}\}$.
The definition of $\tri h_t$ according to \eqref{est:band0} and the bias and variance estimates in \eqref{eq:bias} and \eqref{eqvar}, respectively, imply that
\begin{align*}
&\E_b\left[\|\overline\rho_{t,K}(\tri h_t)-\rho_b'/2\|_\infty\ \mathds{1}_{\{\tri h_t\geq \overline h_{\rho}\}\cap\{\tilde M\leq 1.2M\}}\right]\\
&\quad \leq\ \E_b\bigg[\Big(\|\overline\rho_{t,K}(\tri h_t)-\overline\rho_{t,K}(\overline h_{\rho})\|_\infty+
\|\overline\rho_{t,K}(\overline h_{\rho})-\E_b\left[\overline\rho_{t,K}(\overline h_{\rho})\right]\|_\infty\\
&\hspace*{10em} +\|\E_b\left[\overline\rho_{t,K}(\overline h_{\rho})\right]-\rho_b'/2\|_\infty\Big)\ \mathds{1}_{\{\tri h_t\geq \overline h_{\rho}\}\cap\{\tilde M\leq 1.2 M\}}\bigg]\\
&\quad \leq\ \sqrt{1.2 M}\ \overline \sigma(\overline h_{\rho},t)+\K\overline\sigma(\overline h_{\rho},t)+\frac{\sqrt{0.8M}}{4}\ \overline\sigma(\overline h_{\rho},t)\ = \
O(\overline \sigma(\overline h_{\rho},t)).
\end{align*}
Similarly,
\begin{align*}
&\E_b\left[\|\overline\rho_{t,K}(\tri h_t)-\rho_b'/2\|_\infty\ \mathds{1}_{\{\tri h_t\geq \overline h_{\rho}\}\cap\{\tilde M> 1.2M\}}\right]\\
&\quad\leq\ \sum_{h\in\mathcal{H}\colon h\geq \overline h_{\rho}}\E_b\Big[\left(\|\overline\rho_{t,K}(h)-\E_b\left[\overline\rho_{t,K}(h)\right]\|_\infty+B(h)\right)\ \mathds{1}_{\{\tri h_t=h\}\cap\{\tilde M>1.2M\}}\Big]\\
&\quad\lesssim\ \log t \left(\K\overline\sigma(\overline h_{\rho},t)+B(1)\right)\ \sqrt{\E_b\left[\mathds{1}\{\tilde M>1.2M\}\right]}.
\end{align*}
The function $\psi_{t,t^{-1/2}}$ introduced in \eqref{def:psi} obviously fulfills $\psi_{t,t^{-1/2}}(\log t) = o(1).$
Plugging in the definition of $\tilde M$ (see \eqref{tilm}), one thus obtains by means of Proposition \ref{prop:con_kernel_density_estimator}, for $t$ sufficiently large,
\begin{align}
\begin{split}\label{eq:m}
\P_b\left(|\tilde M-C\|\rho_b\|_\infty|>0.2C\|\rho_b\|_\infty\right)
&=\ \P_b\left(\left|\|\rho_{t,K}(t^{-1/2})\|_\infty-\|\rho_b\|_\infty\right| >0.2\|\rho_b\|_\infty\right)\\
&\leq\ \P_b\left(\|\rho_{t,K}(t^{-1/2})-\rho_b\|_\infty >\e\psi_{t,t^{-1/2}}(\log t)\right)\\
&\leq\ \ t^{-1}.
\end{split}
\end{align}
Consequently, we have shown that
\[
\E_b\left[\|\overline\rho_{t,K}(\tri h_t)-\rho_b'/2\|_\infty\ \mathds{1}_{\{\tri h_t\geq \overline h_{\rho}\}}\right]\ =\ O(\overline \sigma(\overline h_\rho,t)).
\]

\medskip

\textbf{Case 2:} It remains to consider the case $\{\tri h_t< \overline h_{\rho}\}$.
Decomposing again as in the proof of Theorem 2 in \cite{gini09}, we have
\begin{align*}
&\E_b\left[\|\overline\rho_{t,K}(\tri h_t)-\rho_b'/2\|_\infty\ \mathds{1}_{\{\tri h_t< \overline h_{\rho}\}\cap\{\tilde M< 0.8M\}}\right]\\
&\quad\leq\ \sum_{h\in\mathcal{H}\colon h< \overline h_{\rho}}\E_b\Big[\left(\|\overline\rho_{t,K}(h)-\E_b\left[\overline\rho_{t,K}(h)\right]\|_\infty+B(h)\right)\ \mathds{1}_{\{\tri h_t=h\}\cap\{\tilde M<0.8M\}}\Big]\\
&\quad\lesssim\ \log t \left(\K\overline\sigma(h_{\min},t)+B(\overline h_{\rho})\right)\ \sqrt{\E_b\left[\mathds{1}\{\tilde M<0.8M\}\right]}\ =\ O(\overline \sigma(\overline h_\rho,t)),
\end{align*}
where we used \eqref{eq:m} for deriving the last inequality.
Furthermore,
\begin{align*}
&\E_b\left[\|\overline\rho_{t,K}(\tri h_t)-\rho_b'/2\|_\infty\ \mathds{1}_{\{\tri h_t< \overline h_{\rho}\}\cap\{\tilde M\geq 0.8M\}}\right]\\
&\quad \leq \sum_{h\in\mathcal{H}\colon h< \overline h_{\rho}}
\E_b\Big[\left(\|\overline\rho_{t,K}(h)-\E_b\left[\overline\rho_{t,K}(h)\right]\|_\infty+\|\E_b\left[\overline\rho_{t,K}(h)\right]-\rho'_b/2\|_\infty\right)\ \mathds{1}_{\{\tri h_t=h\}\cap\{\tilde M\geq0.8M\}}\Big]\\
&\quad\leq \sum_{h\in\mathcal{H}\colon h<\overline h_{\rho}}\K\overline\sigma(h,t)\cdot\sqrt{\P_b\left(\{\tri h_t=h\}\cap\{0.8M\leq \tilde M\}\right)}+ B(\overline h_{\rho}).
\end{align*}
Since the latter summand is of order $O(\overline\sigma(\overline h_{\rho},T))$, we may focus on bounding the first term.
Using again the arguments of \cite{gini09}, the proof boils down to verifying that the term
\[
(\mathbf{I}):=\sum_{g\in\HH\colon g\leq h} \P_b\left(\|\overline\rho_{t,K}(h^+)-\overline\rho_{t,K}(g)\|_\infty>\sqrt{0.8 M}\overline\sigma(g,t)\right),
\]
with $h^+:=\min\{g\in\mathcal{H}:\, g>h\}$, satisfies
\[
\sum_{h\in\HH\colon h<\overline h_{\rho}}\overline\sigma(h,t)\sqrt{(\mathbf{I})}\ =\ O(\overline\sigma(\overline h_{\rho},t)).
\]
Analogously to \cite{gini09}, we start by noting that, for $g<h^+\leq \overline h_\rho$,
\begin{align*}
&\|\overline \rho_{t,K}(h^+)-\overline\rho_{t,K}(g)\|_\infty\\
&\qquad\leq\ \|\overline \rho_{t,K}(h^+)-\E_b\left[\overline\rho_{t,K}(h^+)\right]\|_\infty + \|\overline \rho_{t,K}(g)-\E_b\left[\overline\rho_{t,K}(g)\right]\|_\infty +B(h^+)+B(g)\\
&\qquad\leq\ \|\overline \rho_{t,K}(h^+)-\E_b\left[\overline\rho_{t,K}(h^+)\right]\|_\infty + \|\overline \rho_{t,K}(g)-\E_b\left[\overline\rho_{t,K}(g)\right]\|_\infty
 +\frac{1}{2}\sqrt{0.8M}\overline\sigma(g,t).
\end{align*}
Thus,
\begin{align*}
&\P_b\left(\|\overline\rho_{t,K}(h^+)-\overline\rho_{t,K}(g)\|_\infty>\sqrt{0.8 M}\overline\sigma(g,t)\right)\\
&\qquad\leq\ \P_b\left(\|\overline\rho_{t,K}(h^+)-\E_b\left[\overline \rho_{t,K}(h^+)\right]\|_\infty>\frac{\sqrt{0.8 M}}{4}\overline\sigma(h^+,t)\right)\\
&\hspace*{8em}+ \P_b\left(\|\overline\rho_{t,K}(g)-\E_b\left[\overline\rho_{t,K}(g)\right]\|_\infty>\frac{\sqrt{0.8 M}}{4}\overline\sigma(g,t)\right).
\end{align*} 
We want to apply Lemma \ref{lem:erg_prop:csi} for bounding the last two terms, and for doing so, we verify that
\[\e\overline\psi_{t,g}(\log(1/g))\ \leq\ \frac{\sqrt{0.8 M}}{4}\overline\sigma(g,t).\]
Indeed,
\begin{align*}
\e\overline\psi_{t,g}(\log(1/g))
&=\ \e\sqrt{\|\rho_b\|_\infty}\left\{2\overline\eta_1\frac{1}{\sqrt{tg}}\left(\log\left(\frac{\log(1/g)t}{g}\right)\right)^{1/2} + \overline\eta_2\left(\frac{\log(1/g)}{tg}\right)^{1/2}\right\} \\
&\leq\ \e (4\overline\eta_1  +2\overline\eta_2)\sqrt{\|\rho_b\|_\infty }\left(\frac{\log(t/g)}{tg}\right)^{1/2}  \\
&\leq\ \e (4\overline\eta_1  +2\overline\eta_2)\sqrt{\|\rho_b\|_\infty } \left(\frac{1}{\sqrt t}\left(\log \left(\frac{t}{g}\right)\right)^{3/2}   +   \left(\frac{\log(t/g)}{tg}\right)^{1/2}\right) \\
&= \ \e (4\overline\eta_1  +2\overline\eta_2)\sqrt{\|\rho_b\|_\infty } \overline \sigma(g,t)\ \leq\ \frac{\sqrt{0.8 M}}{4}\overline\sigma(g,t), 
\end{align*}
for $t$ sufficiently large and for $M=20\e^2 (4\overline\eta_1  +2\overline\eta_2)^2 \|\rho_b\|_\infty$ since $h_{\min}\leq g\leq \overline h_\rho$.
Lemma \ref{lem:erg_prop:csi} then implies that, for every $g\leq \overline h_\rho$, $g\in\mathcal{H}$ and $t$ large enough,
\begin{align*}
&\P_b\left(\|\overline \rho_{t,K}(g)-\E_b\left[\overline \rho_{t,K}(g)\right]\|_\infty>\frac{\sqrt{0.8M}}{4}\overline \sigma(g,t)\right)\\
&\qquad\leq\ \P_b\left(\|\overline \rho_{t,K}(g)-\E_b\left[\overline \rho_{t,K}(g)\right]\|_\infty>\e\overline\psi_{t,g}\left(\log\left(\frac{1}{g}\right)\right)\right)\ \leq \ g.
\end{align*}
Consequently,
\begin{align*}
\sum_{g\in\HH\colon g\leq h} 
\P_b\left(\|\overline\rho_{t,K}(h^+)-\overline\rho_{t,K}(g)\|_\infty>\sqrt{0.8 M}\overline\sigma(g,t)\right)
&\leq\ \sum_{g\in\HH\colon g\leq h}(h^+ + g)\\ &\lesssim\ h\log t
\end{align*} 
and
\begin{equation}\label{theo:einfachI}
\sum_{h\in\HH\colon h<\overline h_{\rho}}\overline\sigma(h,t)\sqrt{(\mathbf{I})}\ =\ O\left(t^{-\frac{1}{2}}(\log t)^2\right)\ =\ o(\overline\sigma(\overline h_\rho,t)).
\end{equation}
\end{proof}

We now turn to proving the result on lower bounds for drift estimation wrt $\sup$-norm risk.

\begin{proof}[Proof of Theorem \ref{thm:lowerboundableitung}]
We start with stating a crucial auxiliary result.

\begin{lemma}[Theorem 2.7 in \cite{tsy09}] \label{lem:tsybakov} 
Fix $\CC,A,\gamma,\beta,\mathcal L\in (0,\infty)$, and assume that there exist a finite set $J_t=\{0,\ldots,M_t\}$, $M_t\in\N$, and hypotheses $\{b_j:\, j\in J_t\}\subseteq  \Sigma(\beta, \mathcal L)$ satisfying
\begin{itemize}
\item[$\operatorname{(a)}$]
$\quad
\|\rho'_j - \rho_k'\|_\infty\geq 2\psi_t>0,\quad \text{ for any }j\neq k,\, j,k \in J_t,$ or
\item[$\operatorname{(b)}$]
$\quad
\|(\rho_j^2)' - (\rho^2_k)'\|_\infty\geq 2\psi_t>0,\quad \text{ for any }j\neq k,\, j,k \in J_t,$
\end{itemize}
together with the condition that, for any $j\in J_t$, $\P_{b_{j}}=:\P_j\ll \P_0$, and 
\[
\frac{1}{|J_t|}\sum_{j\in J_t}\KL(\P_j,\P_0) = \frac{1}{|J_t|}\sum_{j\in J_t} \E_j\left[\log\left(\frac{\d\P_j}{\d\P_0}\left(X^t\right)\right)\right]\ \leq\ \alpha \log(|J_t|), 
\]
for some $0<\alpha<1/8$.
Here, $\P_{b_j}$ is the measure of the ergodic diffusion process defined via the SDE \eqref{SDE} with drift coefficient $b_j$ and the corresponding invariant density $\rho_j$, $ j\in J_t$.
Then, in case of $\operatorname{(a)}$ and $\operatorname{(b)}$, respectively, it follows
\begin{align*}
\inf_{\tilde {\partial\rho}_t}\sup_{b\in\Sigma(\beta, \mathcal L)} \E_b\left[\psi_t^{-1}\|\tilde{\partial\rho}_t - \rho'_b\|_\infty\right]&\geq\ c(\alpha)>0,\\
\inf_{\tilde {\partial\rho^2_t}}\sup_{b\in\Sigma(\beta, \mathcal L)} \E_b\left[\psi_t^{-1}\|\tilde{\partial\rho^2}_t - (\rho^2_b)'\|_\infty\right]&\geq\ c(\alpha)>0,
\end{align*}
where the constant $c(\alpha)$ depends only on $\alpha$ and the infimum is taken over all estimators $\tilde{\partial \rho}_t$ of $\rho'_b$ and estimators $\tilde{\partial\rho^2_t}$ of $(\rho^2_b)',$ respectively.  
\end{lemma}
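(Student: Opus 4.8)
The plan is to derive Lemma \ref{lem:tsybakov} from the abstract Kullback (Fano-type) minimax lower bound for hypothesis testing, namely Theorem 2.7 in \cite{tsy09}, via the classical reduction of estimation to testing; the statement is essentially that theorem rephrased for the present model. The key point is that the reduction is purely measure-theoretic, so the fact that the $\P_j$ are laws of continuous-time diffusion paths on $\bigl(C(0,t),\mathcal B(C(0,t))\bigr)$ plays no special role.

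\emph{Reduction to testing, case $\operatorname{(a)}$.} Fix $t$, abbreviate $J=J_t$, and let $\tilde{\partial\rho}_t$ be an arbitrary estimator, i.e.\ a measurable function of $X^t=(X_s)_{0\le s\le t}$. Associate with it the test
\[
\hat\psi\ :=\ \operatorname*{argmin}_{k\in J}\ \|\tilde{\partial\rho}_t-\rho_k'\|_\infty ,
\]
ties broken by the smallest index. If $\hat\psi\neq j$ for the true index $j$, then minimality of $\hat\psi$ and the triangle inequality give $\|\rho_j'-\rho_{\hat\psi}'\|_\infty\le 2\|\tilde{\partial\rho}_t-\rho_j'\|_\infty$, while the separation hypothesis $\operatorname{(a)}$ forces $\|\rho_j'-\rho_{\hat\psi}'\|_\infty\ge 2\psi_t$; hence $\|\tilde{\partial\rho}_t-\rho_j'\|_\infty\ge\psi_t$. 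Consequently $\{\hat\psi\neq j\}\subseteq\{\|\tilde{\partial\rho}_t-\rho_j'\|_\infty\ge\psi_t\}$, and Markov's inequality yields, for every $j\in J$,
\[
\E_j\bigl[\psi_t^{-1}\|\tilde{\partial\rho}_t-\rho_j'\|_\infty\bigr]\ \ge\ \P_j\bigl(\|\tilde{\partial\rho}_t-\rho_j'\|_\infty\ge\psi_t\bigr)\ \ge\ \P_j(\hat\psi\neq j).
\]
Since $b_j\in\Sigma(\beta,\mathcal L)$ for every $j\in J$, taking first $\max_{j\in J}$ and then $\inf$ over all estimators gives
\[
\inf_{\tilde{\partial\rho}_t}\ \sup_{b\in\Sigma(\beta,\mathcal L)}\E_b\bigl[\psi_t^{-1}\|\tilde{\partial\rho}_t-\rho_b'\|_\infty\bigr]\ \ge\ \inf_{\hat\psi}\ \max_{j\in J}\P_j(\hat\psi\neq j),
\]
the infimum on the right running over all tests $\hat\psi\colon C(0,t)\to J$, a class that contains every estimator-induced test.

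\emph{Applying Tsybakov's testing bound.} Under the standing assumptions $\P_j\ll\P_0$ and $\frac1{|J_t|}\sum_{j\in J_t}\KL(\P_j,\P_0)\le\alpha\log|J_t|$ with $0<\alpha<1/8$ --- which, up to the harmless reindexing $|J_t|=M_t+1$, is precisely the hypothesis of Theorem 2.7 in \cite{tsy09} applied to the path-space measures $\P_0,\dots,\P_{M_t}$ --- that theorem delivers a lower bound for $\inf_{\hat\psi}\max_{j\in J}\P_j(\hat\psi\neq j)$ of the form $\tfrac{\sqrt{M_t}}{1+\sqrt{M_t}}\bigl(1-2\alpha-\sqrt{2\alpha/\log M_t}\,\bigr)$, which is nondecreasing in $M_t\ge 2$ and strictly positive because $1-2\alpha>3/4$, hence bounded below by a constant $c(\alpha)>0$ depending only on $\alpha$. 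Combining this with the previous display proves the first asserted inequality. Case $\operatorname{(b)}$ follows from the identical argument, replacing $\rho_j'$ by $(\rho_j^2)'$ and $\tilde{\partial\rho}_t$ by $\tilde{\partial\rho^2}_t$ throughout and invoking separation $\operatorname{(b)}$ in place of $\operatorname{(a)}$.

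There is no genuine obstacle here: the only mildly delicate points are to set up the reduction so that the supremum over the whole class $\Sigma(\beta,\mathcal L)$ dominates the finite maximum over the hypotheses $\{b_j\}$, and to invoke Tsybakov's abstract bound with the Kullback--Leibler divergences taken between the correct continuous-observation laws. All diffusion-specific work --- evaluating $\KL(\P_j,\P_0)=\tfrac12\E_j\int_0^t(b_j-b_0)^2(X_s)\,\d s$ by Girsanov's theorem and exhibiting separated hypotheses $\{b_j\}\subseteq\Sigma(\beta,\mathcal L)$ achieving the rate $\psi_t\sim(\log t/t)^{\beta/(2\beta+1)}$ --- is deferred to the proof of Theorem \ref{thm:lowerboundableitung} proper.
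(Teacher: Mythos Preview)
Your proposal is correct. The paper does not give its own proof of this lemma; it simply states it as a reformulation of Theorem~2.7 in \cite{tsy09} and uses it as a black box, so your standard reduction-to-testing argument followed by invoking Tsybakov's Fano-type bound is exactly the intended derivation.
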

\paragraph{Proof of the lower bound for estimating $\rho_b'$}
The lower bound \eqref{lowerboundableitung} follows by a straightforward application of Lemma \ref{lem:tsybakov}.

\bigskip 

\textbf{\texttt{Step 1:} Construction of the hypotheses.} 
Fix $\beta,\mathcal L,\CC,\gamma,A \in (0,\infty)$, and let $b_0\in\Sigma(\beta,\mathcal L/2,\CC/2, A,\gamma)\subseteq \Sigma(\beta,\mathcal L,\CC, A,\gamma)$. 
In addition, set $J_t:= \{0, \pm 1, \ldots, \pm(\lfloor A (2h_t)^{-1} \rfloor - 1)\}$, $x_j:=2h_t j, j\in J_t$, and 
\[
h_t\ :=\ v\left(\frac{\log t}{t}\right)^{\frac{1}{2\beta + 1}},
\]
$v<1$ some positive constant which will be specified later. 
Let $Q\colon\R\to\R$ be a function that satisfies $Q\in C_c^\infty(\R)$, $\supp(Q)\subseteq [-\frac{1}{2},\frac{1}{2}]$, $Q\in \mathcal H(\beta +1, \frac{1}{2})$, $\left|Q'(0)\right|>0$ and $\int Q(x)\d x =0$. 
The hypotheses will be constructed via additive perturbations of the invariant density $\rho_0:=\rho_{b_0}\in\mathcal H\left(\beta +1,\mathcal L/2\right)$ associated with the drift coefficient $b_0$. 
For this purpose, define
\[ G_0 :\equiv 0,\qquad G_j(x):=\mathcal L h_t^{\beta +1}Q\left(\frac{x-x_j}{h_t}\right),\quad x\in\R,\, j\in J_t\setminus\{0\},\]
and the hypotheses $\rho_j:=\rho_0 + G_j$, $j\in J_t$.
Fix $j\in J_t\setminus\{0\}$, and note that 
\[\partial^k G_j(x)\ =\ \mathcal L h_t^{\beta + 1 - k}\partial^k Q\left(\frac{x-x_j}{h_t}\right),\quad x\in\R,\, k=0,...,\lfloor\beta + 1\rfloor.\]
We immediately deduce that $\|\partial^k G_j\|_\infty \leq \mathcal L/2$ since $h_t\leq 1$ for $t$ sufficiently large and $Q\in\mathcal H\left(\beta+1, \frac{1}{2}\right)$.
Furthermore, for any $x,y\in\R$,
\begin{align*}
\left|\partial^{\lfloor \beta +1 \rfloor}G_j(x)-\partial^{\lfloor \beta +1 \rfloor}G_j(y)\right|
&=\ \mathcal L h_t^{\beta+1-\lfloor \beta +1 \rfloor}\left|\partial^{\lfloor \beta +1 \rfloor}Q\left(\frac{x-x_j}{h_t}\right)-\partial^{\lfloor \beta +1 \rfloor}Q\left(\frac{y-x_j}{h_t}\right)\right|\\
&\leq\ \mathcal L h_t^{\beta + 1-\lfloor \beta +1 \rfloor}\frac{1}{2}\left(\frac{|x-y|}{h_t}\right)^{\beta +1 - \lfloor\beta + 1\rfloor}\ =\ \frac{\mathcal L}{2}|x-y|^{\beta+1-\lfloor\beta+1\rfloor}.
\end{align*}
Thus, $G_j \in \mathcal H(\beta+1,\mathcal L/2)$, and, in particular,
\begin{equation}\label{rhojHoelder}
\rho_j\ =\ \rho_0 + G_j \in \mathcal H(\beta+1,\mathcal L),\quad \text{ for all }j\in J_t.
\end{equation}
Note that, since $\supp(G_j)\subset (-A,A)$ for all $j\in J_t$, we have $G_j(x)=0$ for all $|x|\geq A$. 
Thus, for verifying non-negativity, it suffices to show that $\rho_j(x)\geq 0$ for any $x\in (-A,A)$.
Furthermore, $\inf_{-A\leq x \leq A}\rho_0(x)>0$, and, since 
\[\|G_j\|_\infty\ \leq\ \mathcal L h_t^{\beta + 1} \left\|Q\left(\frac{\cdot-x_j}{h_t}\right)\right\|_\infty\ \leq\ \frac{\mathcal L}{2}h_t^{\beta +1}\ =\ o(1),\]
it holds $\liminf_{t\to\infty}\min_{j\in J_t}\inf_{-A\leq x \leq A}\rho_j (x)>0$. 
For $t$ sufficiently large, we therefore find a constant $c_*$ such that 
\begin{equation}\label{lowerboundrhoj}
\min_{j\in J_t}\inf_{-A\leq x \leq A}\rho_j(x)\ \geq\ c_*\ >\ 0.
\end{equation}
Since, in addition, $\int G_j(x)\d x = 0$, $\rho_j$ is a probability density for each $j\in J_t$.
Let
\[b_j\ :=\ \frac{\rho_j'}{2\rho_j},\quad j\in J_t,\]
and note that $b_j \in \text{Lip}_{\text{loc}}(\R)$. 
Furthermore, $b_j$ can be rewritten as 
\begin{equation}\label{bj}
b_j = \frac{1}{2}\frac{\rho'_0 + G'_j}{\rho_0 + G_j} = b_0 + \frac{1}{2}\left(\frac{\rho'_0 + G'_j}{\rho_0 + G_j}-\frac{\rho'_0}{\rho_0}\right)
= b_0 + \frac{1}{2}\left(\frac{G'_j\rho_0 -  \rho'_0G_j}{\rho_0(\rho_0 + G_j)}\right).
\end{equation}
From this representation, we directly see that $b_0=b_j$ on $\R\setminus(-A,A)$. 
Since $\|G_j\|_\infty + \|G'_j\|_\infty = o(1)$ and due to \eqref{lowerboundrhoj}, we have that 
\[\sup_{-A<x<A}\frac{1}{2}\left|\frac{G'_j\rho_0 -  \rho'_0G_j}{\rho_0(\rho_0 + G_j)}\right|\ \leq\ \frac{\CC}{2},\quad \text{for }t \text{ sufficiently large.}\]
In particular, this implies
\[|b_j(x)|\ \leq\ \frac{\CC}{2}(1+|x|) + \frac{\CC}{2}\ \leq\ \CC(1+|x|), \quad \forall x\in \R,\, t \text{ sufficiently large, } \]
and we can deduce that, for any $j\in J_t$, $b_j\in \Sigma(\CC, A, \gamma,1)$. 
Therefore, each $b_j$ gives rise to an ergodic diffusion process via the SDE $\d X_t=b_j(X_t)\d t + \d W_t$ with invariant density $\rho_j$.
Taking into consideration \eqref{rhojHoelder}, we have thus shown that
\[b_j\in \Sigma(\beta,\mathcal L,\CC, A, \gamma),\quad \forall j\in J_t,\]
for $t$ sufficiently large.

\medskip

\textbf{\texttt{Step 2:} Evaluation of the Kullback--Leibler divergence between the hypotheses.}
From Girsanov's theorem, it can be deduced that 
\[
\KL(\P_j,\P_0)\ =\ \E_j\left[\log\left(\frac{\rho_j(X_0)}{\rho_0(X_0)}\right)\right] + \frac{1}{2}\E_j\left[\int_0^t \left(b_0(X_u)-b_j(X_u)\right)^2\d u\right]\ =:\ (\I_j) + (\II_j).
\]
We first prove boundedness of $(\I_j)$. 
Note that $\rho_j/\rho_0\equiv1$ on $\R\setminus(-A,A)$ and 
\[
\max_{j\in J_t}\sup_{-A\leq x \leq A}\frac{\rho_j(x)}{\rho_0(x)}\ \leq\ \frac{\mathcal L}{\inf_{-A\leq x \leq A }\rho_0(x)},\qquad
\min_{j\in J_t}\inf_{-A\leq x \leq A}\frac{\rho_j(x)}{\rho_0(x)}\ \geq\ \frac{c_*}{\mathcal L}.
\] 
Therefore, $\rho_j/\rho_0$ is bounded away from zero and infinity, uniformly for all $j\in J_t$ and $t$ sufficiently large. 
In particular, $\max_{j\in J_t}|(\I_j)|\ =\ O(1)$.
We now turn to analysing $(\II_j)$. 
From \eqref{bj}, we can deduce, for any $j\in J_t$,
\begin{equation}\label{Zerlegungb-bj}
b_0-b_j\ =\ \frac{b_0G_j}{\rho_0 + G_j} - \frac{1}{2}\frac{G'_j}{\rho_0 + G_j}.
\end{equation}
Since $G_j\equiv 0$ on $\R\setminus (-A,A)$,
\[
\max_{j\in J_t}\left\|\frac{b_0G_j}{\rho_0 + G_j}\right\|_\infty\ \leq\ \max_{j\in J_t} \sup_{-A\leq x \leq A}|b_0(x)| \frac{\|G_j\|_\infty}{c_*}\ =\ O\left(h_t^{\beta + 1}\right),
\]
and, consequently,
\[
\max_{j\in J_t}\E_j\left[\int_0^t \left(\frac{b_0(X_u)G_j(X_u)}{\rho_0(X_u) + G_j(X_u)}\right)^2\d u\right]=O\left(t h_t^{2(\beta + 1)}\right)
= O\left(v^{2\beta + 2}\log t\right).
\]
For the second term on the rhs of \eqref{Zerlegungb-bj}, we calculate
\begin{align*}
&\max_{j\in J_t} \E_j\left[\int_0^t \left(\frac{(G'_j)^{2}(X_u)}{4(\rho_0(X_u)+G_j(X_u))^2}\right) \d u\right] \\
&\quad =\ \max_{j\in J_t}  \frac{t}{4} \int_{-A}^A \frac{\mathcal L^2 h_t^{2\beta}(Q')^2\left(\frac{x-x_j}{h_t}\right)}{(\rho_0(x) + G_j(x))^2}\left(\rho_0(x) + \mathcal L h_t^{\beta +1}Q\left(\frac{x-x_j}{h_t}\right)\right)\d x\\
&\quad \lesssim\ \max_{j\in J_t} c_*^{-2}\Bigg[t \int_{-A}^A h_t^{2\beta}(Q')^2\left(\frac{x-x_j}{h_t}\right)\rho_0(x)\d x\\
&\hspace*{5em}+ \int_{-A}^A(Q')^2\left(\frac{x-x_j}{h_t}\right)\left|Q\left(\frac{x-x_j}{h_t}\right)\right| \d x h_t^{3\beta + 1}\Bigg]\\
&\quad\lesssim\ th_t^{2\beta +1}\mathcal L \int (Q')^2(x)\d x + 2 t A\|Q'\|_\infty^2\|Q\|_\infty h_t^{3\beta +1}\\
&\quad\lesssim\ th_t^{2\beta +1}\ \eqsim\ v^{2\beta + 1} \log t.
\end{align*}
This implies that $\max_{j\in J_t} (\II_j)\ =\ O(v^{2\beta +1}\log t)$ such that
\[\max_{j\in J_t} \KL(\P_j,\P_0)\ =\ O(v^{2\beta +1}\log t).\]

\medskip

\textbf{\texttt{Step 3:} Deducing the lower bound by application of Lemma \ref{lem:tsybakov}.}
For any $j\neq k, j,k \in J_t$, we have
\begin{align*}
\|\rho'_j-\rho'_k\|_\infty &=\ \|G'_j - G'_k\|_\infty = \mathcal Lh_t^{\beta}\sup_{x\in\R}\left|Q'\left(\frac{x-x_j}{h_t}\right) - Q'\left(\frac{x-x_k}{h_t}\right)\right|\\
&\geq \ \mathcal L h_t^\beta |Q'(0)| =\mathcal L|Q'(0)| v^\beta\left(\frac{\log t}{t}\right)^{\frac{\beta}{2\beta + 1}}.
\end{align*}
Here we used that $|x_j-x_k|\geq 2 h_t$ implies that $x_j\notin \supp\left(Q'\left(\frac{\cdot-x_k}{h_t}\right)\right)$, noting that
\[\supp\left(Q'\left(\frac{\cdot-x_k}{h_t}\right)\right)\subseteq (x_k-h_t,x_k + h_t).\]
Furthermore, the number of hypotheses $|J_t|$ satisfies
\[|J_t|\ =\ 2 \left\lfloor\frac{A}{2 h_t}\right\rfloor - 1\ \eqsim\ v^{-1} \left(\frac{ t}{\log t}\right)^{\frac{1}{2\beta + 1}}.\]
Consequently, there is a positive constant $c_1$ such that 
$\log(|J_t|) \geq c_1 \log t$, for all $t$ sufficiently large. 
From the arguments in Step 2, it is clear that $v$ can be chosen small enough such that, for some positive constant $c_2$,
\[\frac{1}{|J_t]}\sum_{j\in J_t} \KL(\P_j,\P_0)\ \leq\ c_2v^{2\beta + 1}\log t\ \leq\ \frac{1}{10}c_1\log t\ \leq\ \frac{1}{10}\log(|J_t|),\] 
for all $t$ sufficiently large.
\eqref{lowerboundableitung} now follows immediately from Lemma \ref{lem:tsybakov}.
 
\paragraph{Proof of the weighted lower bound for drift estimation}
For proving \eqref{lowerbounddrift}, we use Lemma \ref{lemma:lowdrift} and the following 

\begin{proposition}\label{thm:lower_bound_rho'rho} 
Grant the assumptions of Theorem \ref{thm:lowerboundableitung}.
Then,
\[\liminf_{t \to \infty} \inf_{\tilde{\partial\rho^2_t}}\sup_{b\in\Sigma(\beta,\mathcal L)}\E_b\left[\left(\frac{\log t}{t}\right)^{-\frac{\beta}{2\beta +1}}\|\tilde{\partial\rho^2_t} - (\rho_b^2)'\|_\infty\right]>0\]
where the infimum is taken over all possible estimators $\tilde{\partial \rho^2_t}$ of $(\rho_b^2)'$.
\end{proposition}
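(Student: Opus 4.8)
The plan is to apply Lemma \ref{lem:tsybakov}, now in its variant $\operatorname{(b)}$ concerning $(\rho_b^2)'$, reusing verbatim the family of hypotheses constructed in \texttt{Step 1} of the proof of \eqref{lowerboundableitung}. Thus I would fix $b_0\in\Sigma(\beta,\mathcal L/2,\CC/2,A,\gamma)$, keep the grid $J_t=\{0,\pm1,\dots,\pm(\lfloor A(2h_t)^{-1}\rfloor-1)\}$ with $x_j=2h_tj$ and $h_t=v(\log t/t)^{1/(2\beta+1)}$, and perturb the invariant density by the disjointly supported bumps $G_j(x)=\mathcal L h_t^{\beta+1}Q((x-x_j)/h_t)$, so that $\rho_j=\rho_0+G_j$ and $b_j=\rho_j'/(2\rho_j)$. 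Everything structural is already available from that proof: $\rho_j\in\mathcal H_{\R}(\beta+1,\mathcal L)$, $b_j\in\Sigma(\beta,\mathcal L,\CC,A,\gamma)$, the lower bound $\rho_j(x)\geq c_*>0$ on $[-A,A]$ from \eqref{lowerboundrhoj}, the disjointness of the supports of $G_j$ and of $G_j'$ (since $|x_j-x_k|\geq 2h_t$), as well as the Kullback estimate $\max_{j\in J_t}\KL(\P_j,\P_0)=O(v^{2\beta+1}\log t)$ with $\log|J_t|\geq c_1\log t$. Choosing $v$ small enough then gives $|J_t|^{-1}\sum_{j\in J_t}\KL(\P_j,\P_0)\leq \tfrac{1}{10}\log|J_t|$, which is the Kullback condition of Lemma \ref{lem:tsybakov}.

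The only new point is the separation bound $\operatorname{(b)}$: a lower bound on $\|(\rho_j^2)'-(\rho_k^2)'\|_\infty$ for $j\neq k$. Here I would simply use the product rule, $(\rho_j^2)'=2\rho_j\rho_j'=2(\rho_0+G_j)(\rho_0'+G_j')$, and evaluate the difference at the grid point $x_j$. Since $|x_j-x_k|\geq 2h_t$ forces $x_j\notin\supp(G_k)\cup\supp(G_k')$, we get $(\rho_k^2)'(x_j)=2\rho_0(x_j)\rho_0'(x_j)$, hence
\[
(\rho_j^2)'(x_j)-(\rho_k^2)'(x_j)\ =\ 2\rho_0(x_j)\,G_j'(x_j)+2\rho_0'(x_j)\,G_j(x_j)+2G_j(x_j)\,G_j'(x_j).
\]
Now $G_j'(x_j)=\mathcal L h_t^{\beta}Q'(0)$ with $|Q'(0)|>0$, while $|G_j(x_j)|\leq\tfrac{\mathcal L}{2}h_t^{\beta+1}$ and $\rho_0,\rho_0'$ are bounded with $\rho_0(x_j)\geq c_*$; the first term therefore dominates, and for $t$ large
\[
\|(\rho_j^2)'-(\rho_k^2)'\|_\infty\ \geq\ 2c_*\mathcal L|Q'(0)|\,h_t^{\beta}-O(h_t^{\beta+1})\ \geq\ c_*\mathcal L|Q'(0)|\,v^\beta\Big(\frac{\log t}{t}\Big)^{\frac{\beta}{2\beta+1}}.
\]
This is $\geq 2\psi_t$ for $\psi_t$ a fixed positive multiple of $(\log t/t)^{\beta/(2\beta+1)}$, so condition $\operatorname{(b)}$ holds.

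Lemma \ref{lem:tsybakov}$\operatorname{(b)}$ then yields $\inf_{\widetilde{\partial\rho^2_t}}\sup_{b\in\Sigma(\beta,\mathcal L)}\E_b[\psi_t^{-1}\|\widetilde{\partial\rho^2_t}-(\rho_b^2)'\|_\infty]\geq c(\alpha)>0$, and since $\psi_t\asymp(\log t/t)^{\beta/(2\beta+1)}$ this is precisely the asserted liminf bound; it also feeds, via Lemma \ref{lemma:lowdrift}, into the weighted drift lower bound \eqref{lowerbounddrift}. I do not expect a genuine obstacle here, since apart from the elementary differentiation of $\rho_j^2$ the whole construction — hypotheses, Hölder membership, Girsanov/Kullback bound, counting $|J_t|$ — is inherited from the proof of \eqref{lowerboundableitung}. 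The only point requiring a little care is that the perturbation must be evaluated where its \emph{derivative} is bounded away from zero, which is why $x_j$ (where $G_j'(x_j)\propto Q'(0)\neq 0$) is the right test point rather than, say, a maximiser of $|G_j|$.
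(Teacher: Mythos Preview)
Your argument is correct and follows essentially the paper's strategy of reusing Steps~1--3 and checking only the separation condition~$\operatorname{(b)}$. There is, however, a small but genuine difference in how the separation is obtained. The paper does \emph{not} reuse the bump function $Q$ verbatim: it imposes the additional requirement $Q(0)=0$ (while keeping $|Q'(0)|>0$). With this choice $G_j(x_j)=0$, so upon evaluating at $x_j$ the cross terms $2\rho_0'(x_j)G_j(x_j)$ and $2G_j(x_j)G_j'(x_j)$ vanish identically, and one reads off the clean lower bound $2\mathcal L|Q'(0)|\inf_{[-A,A]}\rho_0\cdot h_t^\beta$ without any $o(h_t^\beta)$ remainder or ``for $t$ large enough'' caveat.

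Your route avoids modifying the construction altogether and instead absorbs the two cross terms as $O(h_t^{\beta+1})$, which is indeed lower order than the leading $2\rho_0(x_j)G_j'(x_j)\asymp h_t^\beta$. This is perfectly valid; the price is that the separation constant is only asymptotic (you need $t$ large enough for the dominant term to win), whereas the paper's tweak gives a constant that is explicit from the outset. Either way, the Kullback bound and the counting of $|J_t|$ are unchanged, and Lemma~\ref{lem:tsybakov}$\operatorname{(b)}$ applies.
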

\begin{proof}
The proof follows exactly the lines of the proof of lower bound for estimating $\rho_b'$, except from constructing the hypotheses in such a way that, for any $j\neq k$,
\[
\|(\rho_j^2)'-(\rho_k^2)'\|_\infty\ \geq\ C \left(\frac{\log t}{t}\right)^{\frac{\beta}{2\beta + 1}},
\]
$C$ some positive constant. 
This is achieved by choosing the kernel function $Q$ involved in the construction of the function $G_j$, $j\in J_t$, a little bit differently. 
Precisely, specify some function $Q\colon \R\to\R$ such that $Q\in C_c^\infty(\R)$, $\supp(Q) \subseteq [-\frac{1}{2},\frac{1}{2}]$, $Q\in\mathcal H(\beta+1,\frac{1}{2})$, $\int Q(x)\d x =0$, $Q(0)=0$ and $\left|Q'(0)\right|>0$.
For any $j\neq k, j,k \in J_t$, one then has
\begin{align*}
\|(\rho_j^2)'-(\rho_k^2)'\|_\infty &=\ 2\|(\rho'_0+G'_j)(\rho_0 + G_j) - (\rho'_0+G'_k)(\rho_0 + G_k)\|_\infty \\
&=\ 2\|\rho'_0G_j + G'_j\rho_0 + G'_jG_j  - \rho'_0G_k - G'_k\rho_0 - G'_kG_k\|_\infty\\
&\geq\ 2\left|\rho'_0(x_j)G_j(x_j) + G'_j(x_j)\rho_0(x_j) + G'_j(x_j)G_j(x_j)\right|\\
&\geq\ 2\mathcal L h_t^\beta \left|\rho'_0(x_j)h_tQ(0) + Q'(0)\rho_0(x_j) + \mathcal L h_t^{\beta + 1}Q(0)Q'(0)\right|\\  
&=\ 2\mathcal L h_t^\beta \left|Q'(0)\right|\rho_0(x_j)\ \geq\  2\mathcal L h_t^\beta \left|Q'(0)\right| \inf_{-A\leq x \leq A}\rho_0(x)\\
&=\ 2\mathcal L\left|Q'(0)\right| \inf_{-A\leq x \leq A}\rho_0(x) v^\beta\left(\frac{\log t}{t}\right)^{\frac{\beta}{2\beta + 1}}.
\end{align*}
Here we used the fact that $|x_j-x_k|\geq 2 h_t$ implies that 
\[x_j\notin \supp\left(Q'\left(\frac{\cdot-x_k}{h_t}\right)\right) \cup \supp\left(Q\left(\frac{\cdot-x_k}{h_t}\right)\right)\] because $\supp\left(Q'\left(\frac{\cdot-x_k}{h_t}\right)\right)\cup \supp\left(Q\left(\frac{\cdot-x_k}{h_t}\right)\right)\subseteq (x_k-h_t,x_k + h_t)$.
 The assertion then follows as in the previous proof (see Steps 1-3) from part (b) of Lemma \ref{lem:tsybakov}.
\end{proof}

In particular, Proposition \ref{thm:lower_bound_rho'rho} implies that condition \eqref{condition:lowdrift} from Lemma \ref{lemma:lowdrift} is fulfilled for $\psi_t=(\log t/t)^{\frac{\beta}{2\beta+1}}$, and \eqref{lowerbounddrift} follows.
\end{proof}

\medskip

\begin{proof}[Proof of Theorem \ref{theo:sim}]
The definition of $\hat h_t$ according to \eqref{est:band} implies that 
\begin{equation}\label{15.1}
\|\rho_{t,K}(\hat h_t)-\rho_{t,K}(h_{\min})\|_\infty\ \lesssim\ \frac{1}{\sqrt t\log t}.
\end{equation}
Furthermore, $h_{\min}$ satisfies the assumption of Proposition \ref{donskerdensity} such that assertion \eqref{sim:1} of the Theorem immediately follows.
It remains to verify \eqref{sim:21}.
First, we remark that it always holds $\hat h_t\geq h_{\min}$ such that $\hat h_t$ is well-defined.
For estimation of $\rho_b$ via $\rho_{t,K}(h_{\min})$, Proposition \ref{prop:con_kernel_density_estimator} yields 
\[
\sup_{b\in\Sigma(\beta,\mathcal L)} \left(\E_b\left[\left\|\rho_{t,K}(h_{\min})-\rho_b\right\|_\infty^p\right]\right)\p = O\left(t^{-1/2}(1+\sqrt{\log t} + \sqrt p + pt^{-1/2})\right). 
\] 
We can then deduce from \eqref{15.1} that 
\[
\sup_{b\in\Sigma(\beta,\mathcal L)} \left[\E_b\left(\left\|\rho_{t,K}(\hat h_t)-\rho_b\right\|_\infty\right)^p\right]\p =O\left(t^{-1/2}(1+\sqrt{\log t} + \sqrt p + pt^{-1/2})\right).
\]
Thus, $\rho_{t,K}(\hat h_t)$ satisfies assumption (E1) from Lemma \ref{lem:decomp}. 
We may now follow the proof of Theorem \ref{theo:est}.
In particular, it again suffices to investigate the estimator $\overline \rho_{t,K}(\hat h_t)$ in order to prove \eqref{sim:21} since conditions (E1) and (E2) from Lemma \ref{lem:decomp} are satisfied. 
Note that 
\[
\P_b\left(|\wideparen M-C\|\rho_b\|_\infty|>0.2C\|\rho_b\|_\infty\right)=\P_b\left(|\|\rho_{t,K}(h_{\min})\|_\infty-\|\rho_b\|_\infty|>0.2\|\rho_b\|_\infty\right) \leq\ \ t^{-1}.
\] 
follows exactly as in the proof of Theorem \ref{theo:est} since $\psi_{t,h_{\min}}(\log t)= o(1)$.
Additional arguments are required only for the investigation of Case 2 ($\hat h_t< \overline h_{\rho}$). 
As in the proof of Theorem \ref{theo:est}, it is shown that 
\begin{align*}
&\E_b\left[\|\overline\rho_{t,K}(\hat h_t)-\rho_b'/2\|_\infty\ \mathds{1}_{\{\hat h_t< \overline h_{\rho}\}\cap\{\wideparen M\geq 0.8M\}}\right]\\
&\qquad\leq\ \sum_{h\in\mathcal{H}\colon h<\overline h_{\rho}}\K\overline\sigma(h,t)\cdot\sqrt{\P_b\left(\{\hat h_t=h\}\cap\{0.8M\leq \wideparen M\}\right)}
+ B(\overline h_{\rho}).
\end{align*}
We bound the first term by 
$\sum_{h\in\HH\colon h<\overline h_{\rho}}\K\overline\sigma(h,t)\left(\sqrt{(\mathbf{I})}\ + \sqrt{(\mathbf{II})}\right)$,
with 
\begin{align*}
(\mathbf{I})&:=\ \sum_{g\in\HH\colon g\leq h} \P_b\left(\|\overline\rho_{t,K}(h^+)-\overline\rho_{t,K}(g)\|_\infty>\sqrt{0.8M}\overline\sigma(g,t)\right),\\
(\mathbf{II})&:=\ \P_b\left(\sqrt t\|\rho_{t,K}(h^+)-\rho_{t,K}(h_{\min})\|_\infty>\frac{\sqrt{h^+}(\log(1/h^+))^4}{\log t}\right),
\end{align*}
where $h^+:=\min\{g\in\mathcal{H}:\, g>h\}$.
$(\mathbf{I})$ is dealt with as in Theorem \ref{theo:est} (see \eqref{theo:einfachI}).

With regard to term $(\mathbf{II})$, we argue as before by means of Proposition \ref{theo:cath1}: 
Since $h_{\min}\leq h^+\leq \overline h_\rho$, it holds for any $\beta>0$ and an arbitrary positive constant $\co$, for some $t$ onwards,
\begin{align*}
\sqrt{h^+}\left( 1 + \log(1/\sqrt{h^+})+\log(t)\right)+\sqrt t\e^{-\co t}+\sqrt t(h^+)^{\beta+1}& =\ o(\lambda'),\\
\sqrt{h_{\min}}\left( 1 + \log(1/\sqrt{h_{\min}})+\log(t)\right)+\sqrt t\e^{-\co t}+\sqrt t(h_{\min})^{\beta+1}& =\ o(\lambda'),
\end{align*}
letting $\lambda':=\sqrt{h^+}(\log(1/h^+))^4/\log t$.
Thus, for any $t>1$ sufficiently large,
\begin{align*}
&\P_b\left(\sqrt t\|\rho_{t,K}(h^+)-\rho_{t,K}(h_{\min})\|_\infty>\lambda'\right)\\
&\quad\leq\ 
\P_b\left(\sqrt t\|\rho_{t,K}(h^+)-L_t^\bullet(X) t^{-1}\|_\infty>\frac{\lambda'}{2}\right) + \P_b\left(\sqrt t\|\rho_{t,K}(h_{\min})-L_t^\bullet(X) t^{-1}\|_\infty>\frac{\lambda'}{2}\right)\\
&\quad \leq\ 2\exp\left(-\frac{\Lambda_1(\log(1/h^+))^4}{\log t}\right)\ \leq\ 2\exp\left(-\widetilde{\Lambda}_1(\log t)^2\right),
\end{align*}
for some positive constant $\widetilde{\Lambda}_1$. Consequently,
\[
\sum_{h\in\HH\colon h<\overline h_{\rho}}\overline\sigma(h,t)\sqrt{(\mathbf{II})}
\ \lesssim\ \log t\cdot \overline{\sigma}(h_{\min},t)\sqrt{\exp(-\widetilde{\Lambda}_1(\log t)^2)}\ =\ o(\overline\sigma(\overline h_{\rho},t)).
\]
We can then proceed as in the proof of Theorem \ref{theo:est} to finish the proof.
\end{proof}

\section{Sketch of the derivation of basic exponential inequalities}\label{app:C}
In order to give some insight into the machinery of the proof of the basic concentration results, we sketch the main steps here in the situation of Proposition \ref{prop:csi}. 
Note that our previous paper \cp~deals with more general objects of the form
\[
\sqrt t \left(\frac{1}{t}\int_0^t f(X_s)\d X_s-\int(fb)\d\mu_b\right),\quad f\in\FF,\]
for some class $\FF$ of continuous functions, the supremum in the concentration results being taken over all functions $f\in\FF$.
The proof of Proposition \ref{prop:csi} relies on martingale approximation, i.e., the decomposition of $\overline\rho_{t,K}(h)-\E_b[\overline\rho_{t,K}(h)]$ into a negligible remainder term and a martingale part which is then dealt with by the generic chaining method.
Introduce the function 
\[g_y(u)\ :=\ \frac{1}{\rho_b(u)}\int_{\R}K\left(\frac{y-x}{h}\right)\rho_b'(x)\left(\mathds{1}\{u>x\}-F_b(u)\right)\d x,\quad u\in\R.\]
It\^o's formula applied to the function $\int_0^\bullet g_y(u)\d u$ and to the diffusion $X$ yields
\[
\int_{X_0}^{X_t}g_y(u)\d u\ =\ \int_0^tg_y(X_s)\d X_s+\frac{1}{2}\int_0^tg_y'(X_s)\d\langle X\rangle_s.
\]
Since
\[g_y'(u)\ =\ -2b(u)g_y(u)+2b(u)K\left(\frac{y-u}{h}\right)- 2\E_b\left[K\left(\frac{y-X_0}{h}\right)b(X_0)\right],
\]
we obtain the representation
\begin{align*}
\overline \rho_{t,K}(h)(y)-\E_b\left[\overline\rho_{t,K}(h)(y)\right]&=\ \frac{1}{t}\int_0^t K\left(\frac{y-X_s}{h}\right)\d X_s-\E_b\left[K\left(\frac{y-X_0}{h}\right)b(X_0)\right]\\
 &=\ \rd_t^y+\Ma_t^y,
\end{align*}
for
\begin{align*}
\rd_t^y&:=\ \int_{X_0}^{X_t}g_y(u)\d u,\\
\Ma_t^y&:=\ \int_0^tK\left(\frac{y-X_s}{h}\right)\d W_s-\int_0^tg_y(X_s)\d W_s\ =:\ \Ma_{1,t}^y+\Ma_{2,t}^y.
\end{align*}
It can be shown by direct calculations that
$ \left(\E_b\left[\sup_{y\in \R} |\rd_t^y|^p\right]\right)\p\leq \gamma(p)$
for some function $\gamma:\R_+\to\R_+$ (see Proposition 8 in \cp).

\paragraph{Step 2: Chaining applied to the martingale part}
Most effort is required for the analysis of the martingale part by the generic chaining method. 
In \cp, a more general setting is considered. 
Instead of the $\sup$-norm, the supremum over general function classes satisfying some mild entropy conditions is investigated. 
It is straightforward to translate our situation into this setting by defining the class $\mathcal F$ as in \eqref{functionclass}. 
The central generic chaining tool goes back to \citeauthor{tala14}.
We will use slightly modified versions of Theorems 3.2 and 3.5 in \cite{dirk15} which in particular yield upper bounds for \emph{all} moments. 
The latter turns out to be very useful for our statistical applications. 

\medskip

\begin{proposition}[cf.~Theorem 3.2 \& 3.5 in \cite{dirk15}]\label{thm:dirk}
Consider a real-valued process $(X_f)_{f\in\mathcal F}$, defined on a semi-metric space $(\FF,d)$.
\begin{itemize}
\item[$\operatorname{(a)}$] If there exists some $\alpha\in(0,\infty)$ such that
\begin{equation}\label{ineq:a}
\mathbb P\left(|X_f-X_g|\geq u d(f,g)\right)\ \leq\ 2\exp\left(-u^{\alpha}\right)\quad\forall  f,g\in\mathcal F,\ u\geq 1,
\end{equation}
then there exists some constant $C_\alpha>0$ (depending only on $\alpha$) such that, for any $1\leq p<\infty$,
\[
\left(\mathbb E\left[\sup_{f\in\mathcal F}|X_f|^p\right]\right)^{\frac{1}{p}}\ \leq\ C_\alpha \int_0^\infty \left(\log N(u,\mathcal F,d)\right)^{\frac{1}{\alpha}}\d u + 2\sup_{f\in\mathcal F}\left(\mathbb E\left[|X_f|^p\right]\right)^{\frac{1}{p}}.
\]
\item[$\operatorname{(b)}$] If there exist semi-metrics $d_1,d_2$ on $\FF$ such that
\[
\P\left(|X_f-X_g|\geq ud_1(f,g)+\sqrt u d_2(f,g)\right)\ \leq\ 2\e^{-u}\quad \forall f,g\in\FF,\ u\geq 1,
\]
then there exist positive constants $\tilde C_1,\tilde C_2$ such that, for any $1\leq p<\infty$,
\begin{align*}
\left(\mathbb E\left[\sup_{f\in\mathcal F}|X_f|^p\right]\right)^{\frac{1}{p}}
&\leq\ \tilde C_1 \int_0^\infty \log N(u,\FF,d_1)\d u \\
&\qquad\quad + \tilde C_2 \int_0^\infty \sqrt{\log N(u,\FF,d_2)}\d u+ 2\sup_{f\in\FF}\left(\mathbb E\left[|X_f|^p\right]\right)^{\frac{1}{p}}.
\end{align*}
\end{itemize}
\end{proposition}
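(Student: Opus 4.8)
The plan is to deduce Proposition~\ref{thm:dirk} directly from the generic chaining machinery of \cite{dirk15}, treating the statement as a convenient reformulation of Theorems 3.2 and 3.5 there rather than proving it from scratch. Two essentially cosmetic adjustments are needed: passing from a bound on the centred supremum $\sup_f|X_f-X_{f_0}|$ (for a fixed reference point $f_0\in\mathcal F$) to a bound on $\sup_f|X_f|$, and replacing Talagrand's chaining functionals $\gamma_\alpha$ by the Dudley-type entropy integrals that appear in the statement. First I would reduce, by a routine monotone approximation along finite subsets of $\mathcal F$ (and noting that the claim is vacuous when the relevant entropy integral diverges), to the case of finite $\mathcal F$, fix some $f_0\in\mathcal F$, and split $X_f=X_{f_0}+(X_f-X_{f_0})$, so that $(\mathbb E\sup_f|X_f|^p)^{1/p}\le(\mathbb E\sup_f|X_f-X_{f_0}|^p)^{1/p}+(\mathbb E|X_{f_0}|^p)^{1/p}$, with the last term $\le\sup_f(\mathbb E|X_f|^p)^{1/p}$.

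For part (a), I would observe that \eqref{ineq:a} is exactly the $\alpha$-subexponential increment hypothesis of Theorem 3.2 in \cite{dirk15}, which yields, for every $p\ge1$, a bound $(\mathbb E\sup_f|X_f-X_{f_0}|^p)^{1/p}\le C_\alpha\,\gamma_\alpha(\mathcal F,d)+(\text{boundary/diameter terms})$, with $C_\alpha$ depending only on $\alpha$. The boundary terms involving $\mathrm{diam}(\mathcal F,d)$ are dominated, up to $\alpha$-dependent constants, by $\gamma_\alpha(\mathcal F,d)$ together with $\sup_f(\mathbb E|X_f|^p)^{1/p}$: one uses that $\mathbb P(|X_f-X_{f_0}|\ge u\,\mathrm{diam}(\mathcal F,d))\le 2\mathrm e^{-u^\alpha}$ for $u\ge1$, and that $\mathrm{diam}(\mathcal F,d)\lesssim_\alpha\int_0^\infty(\log N(u,\mathcal F,d))^{1/\alpha}\,\d u$ because $N(u,\mathcal F,d)\ge2$ for $u$ below half the diameter. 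Finally I would invoke the classical majorisation $\gamma_\alpha(\mathcal F,d)\le C_\alpha'\int_0^\infty(\log N(u,\mathcal F,d))^{1/\alpha}\,\d u$ (the generalised Dudley bound, cf.~\cite{tala14}) and collect constants, arranging that the entire $p$-dependence is carried by the single term $2\sup_f(\mathbb E|X_f|^p)^{1/p}$.

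For part (b) the scheme is identical, now based on the mixed-tail Bernstein-type increment condition, which is the hypothesis of Theorem 3.5 in \cite{dirk15}; it produces a bound of the form $(\mathbb E\sup_f|X_f-X_{f_0}|^p)^{1/p}\le \tilde C_1\,\gamma_1(\mathcal F,d_1)+\tilde C_2\,\gamma_2(\mathcal F,d_2)+(\text{boundary terms})$, and I would then majorise $\gamma_1(\mathcal F,d_1)\lesssim\int_0^\infty\log N(u,\mathcal F,d_1)\,\d u$ and $\gamma_2(\mathcal F,d_2)\lesssim\int_0^\infty\sqrt{\log N(u,\mathcal F,d_2)}\,\d u$, absorb the boundary terms exactly as in part (a), and add back $(\mathbb E|X_{f_0}|^p)^{1/p}\le\sup_f(\mathbb E|X_f|^p)^{1/p}$.

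I do not expect a serious obstacle here: the genuine probabilistic content — a generic chaining bound for $\alpha$-subexponential and for Bernstein-type increments, uniform over the moment order $p$ — is precisely what \cite{dirk15} supplies, so the proof is a translation rather than a new argument. The only points requiring care are bookkeeping ones: checking that the diameter/boundary terms emitted by Dirksen's theorems are genuinely dominated by the stated entropy integrals plus the reference-point moment (so that $C_\alpha$, $\tilde C_1$, $\tilde C_2$ can be taken independent of $p$), and verifying the measurability/separability needed to make $\sup_{f\in\mathcal F}X_f$ a bona fide random variable, which is why one first works with finite index sets and then passes to the limit.
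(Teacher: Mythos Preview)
Your proposal is correct and matches the paper's approach exactly: the paper does not give a proof of this proposition at all, merely remarking after the statement that ``we upper bounded the so-called $\gamma$-functionals appearing in Theorems 3.2 and 3.5 in \cite{dirk15} by entropy integrals.'' Your write-up is in fact more detailed than what the paper provides, spelling out the reduction to finite index sets, the split $X_f = X_{f_0} + (X_f - X_{f_0})$, and the absorption of the diameter terms.
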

Note that we upper bounded the so-called \emph{$\gamma$-functionals} appearing in Theorems 3.2 and 3.5 in \cite{dirk15} by entropy integrals. 
For the cases $\alpha=1,2$, part (a) of the Proposition deals with processes with subexponential or subgaussian increments, respectively. 
Part (b) covers the case of mixed-tail increments.

With regard to the conditions of the preceding proposition, we are led to exploring the tail behaviour of the increments of the martingale part which is done by finding suitable upper bounds for the moments. 
Indeed, taking into account assumption \eqref{kernel} on the kernel as well as the conditions on $b$, it can be shown that, for any $p\geq 1$, $y,z\in\R$,
\begin{eqnarray*}
\left(\E_b\left[\left(t^{-1/2}\left|\M_{2,t}^y\right|\right)^p\right]\right)\p&\leq& \left\|K\left(\frac{y-\cdot}{h}\right)\right\|_{\l^2(\lebesgue)}\cdot V(t,h)p^{\frac{3}{2}},\\
\left(\E_b\left[\left(t^{-1/2}\left|\M_{2,t}^y-\M_{2,t}^z\right|\right)^p\right]\right)\p&\leq&\left\|K\left(\frac{y-\cdot}{h}\right)-K\left(\frac{z-\cdot}{h}\right)\right\|_{\l^2(\lebesgue)}\cdot V(t,h)p^{\frac{3}{2}},
\end{eqnarray*}
for some function $V\colon\R^+\times\R^+\to\R^+$, so that we are led to part (a) of Proposition \ref{thm:dirk}. 
These moment bounds imply a corresponding exponential inequality of the form \eqref{ineq:a}. 
Of course, the dependence of $V$ on $h$ and $t$ is of immense importance for the result and the statistical application. 
Nevertheless, for ease of presentation, we skip details here and refer to Proposition 8 of \cp. 
The estimates therein are obtained by direct calculations, exploiting the conditions on the drift coefficient and the function class $\mathcal F$. 

The upper bound for $\Ma_{1,t}$ is based on another deep result on the $\sup$-norm of the centred local time which itself relies on the generic chaining. 
In order to motivate where this comes from, we sketch the main idea.
The Burkholder-Davis-Gundy inequality and the occupation times formula yield, for any $y\in\R$, and $p\geq 1$,
\begin{align*}
\left(\E_b\left[\left(\frac{1}{\sqrt t}|\Ma_{1,t}^y|\right)^p\right]\right)^{\frac{1}{p}}&=\  
\left(\E_b\left[\left|\frac{1}{\sqrt t}\int_0^t K\left(\frac{y-X_s}{h}\right)\d W_s\right|^p\right]\right)^{\frac{1}{p}}\\
&\leq\ 
C_p\left(\E_b\left[\left(\frac{1}{t}\int_0^tK^2\left(\frac{y-X_s}{h}\right)\d s\right)^{p/2}\right]\right)^{\frac{1}{p}}\\
&=\ 
 C_p\left(\E_b\left[\left(\frac{1}{t}\int_\R K^2\left(\frac{y-v}{h}\right)L_t^v(X)\d v\right)^{p/2}\right]\right)^{\frac{1}{p}}\\
&\leq\ 
 \frac{C_p}{\sqrt t}\left(\int_\R K^2\left(\frac{y-v}{h}\right)\d v\right)^{1/2}\left(\E_b\left[\left(\sup_{a\in\R}|L_t^a(X)|\right)^{p/2}\right]\right)^{\frac{1}{p}}\\
&\lesssim\ \frac{C_p}{\sqrt t}\left\|K\left(\frac{y-\cdot}{h}\right)\right\|_{L^2(\lebesgue)}\left\{\left(\E_b\left[\left(\sup_{a\in\R}|L_t^a(X)-\rho_b(a)|\right)^{p/2}\right]\right)^{\frac{1}{p}}+1\right\},
\end{align*}
$C_p$ denoting some positive constant which depends only on $p$. 
Exploiting the precise knowledge of the dependence of $C_p$ on $p$ and the result on the centred local time (Corollary 2 in \cp), one obtains the desired moment bounds and the corresponding exponential inequality:
\begin{align*}
\left[\E_b\left(t^{-1/2}\left|\M_{1,t}^y\right|\right)^p\right]\p&\leq\ \left\|K\left(\frac{y-\cdot}{h}\right)\right\|_{L^2(\lebesgue)}\cdot V_1(t,h)\left(\sqrt p + \frac{p}{t^{1/4}}\right), \text{ as well as}\\
\left[\E_b\left(t^{-1/2}\left|\M_{1,t}^y-\M_{1,t}^z\right|\right)^p\right]\p&\leq \ \left\|K\left(\frac{y-\cdot}{h}\right)-K\left(\frac{z-\cdot}{h}\right)\right\|_{L^2(\lebesgue)} V_1(t,h)\left(\sqrt p + \frac{p}{t^{1/4}}\right),
\end{align*}
for any $y,z\in\R$, $p\geq 1,$ and for some function $V_1\colon\R^+\times\R^+\to\R^+$. 
Hence, we would like to proceed with the chaining procedure for mixed-tail increments dealt with in part (b) of Proposition \ref{thm:dirk}.

\paragraph{Step 3: Localisation procedure}
Unfortunately, the metrics (basically the $L^2$-distance wrt the Lebesgue measure) for which the conditions of Proposition \ref{thm:dirk} are satisfied do not induce finite covering numbers. 
Therefore, we have to resort to a localisation procedure. 
The idea is to apply the chaining method to suitable subsets $\mathcal F_k\subseteq \FF$, $k\in\N_0$, with a decreasing probability for $\mathcal F_k$ to be relevant for large $k$. 
For suitably chosen compact intervals $I_k$, $k\in\N_0$, define
\[\mathcal F_k\ :=\ \left\{f\in\mathcal F:\, \supp(f)\subseteq I_k\right\}.\] 
The choice of $I_k$, $k\in\N_0$, implies that $\mathcal F=\bigcup_{k\in\N_0}\mathcal F_k$. 
One can then show that there exist sets $A^p_k$, $k\in\N_0$, for any $p\geq 1$ such that
\[A_f:=\{\text{there is } s\in [0,t]\text{ such that }X_s\in\supp(f)\}\subseteq A^p_k\] and 
$\P_b(A^p_k)\leq \exp(-kp)$, for all $f\in\mathcal F_k$, $k\in\N_0$, $p\geq 1$. 
This inequality is deduced from Lemma 1 in \cp~which states a maximal inequality for the diffusion process of the form 
\[\P_b\left(\max_{0\leq s\leq t}|X_s|>kp\Lambda t\right)\ \leq\ \e^{-kp},\quad k\in\N,\] for some constant $\Lambda>0$. 
The localisation procedure hinges on the compact support of the functions in $\mathcal F$. 
In the given set-up, the support of $K((x-\cdot)/h))$ for any $x\in\R$ is contained in $[x-h,x+h]$. 
Taking account of the maximal inequality, it thus becomes apparent that, for large $|x|$, the probability of $A_{K\left(\frac{x-\cdot}{h}\right)}$ is shrinking.
We can then deduce that
\begin{eqnarray*}
\left(\E_b\left[\sup_{y\in\R}\left|\M_t^y\right|^p\right]\right)\p 
&\leq &\sum_{k=0}^\infty \left(\E_b\left[\sup_{K\left(h^{-1}(y-\cdot)\right)\in\mathcal F_k}\left|\M_t^y\mathbbm{1}(A_f)\right|^p\right]\right)\p\\
&\leq& \sum_{k=0}^\infty \left(\E_b\left[\sup_{K\left(h^{-1}(y-\cdot)\right)\in\mathcal F_k}\left|\M_t^y\mathbbm{1}(A^p_k)\right|^{p}\right]\right)^{\frac{1}{p}}\\
&\leq& \sum_{k=0}^\infty \left(\E_b\left[\sup_{K\left(h^{-1}(y-\cdot)\right)\in\mathcal F_k}\left|\M_t^y\right|^{2p}\right]\right)^{\frac{1}{2p}}\e^{-\frac{k}{2}}.
\end{eqnarray*}
Proposition \ref{thm:dirk} can now be applied \emph{locally} to $\mathcal F_k$ and yields an upper bound on 
\[
	\left(\E_b\left[\sup_{K\left(h^{-1}(y-\cdot)\right)\in\mathcal F_k}\left|\M_t^y\right|^{2p}\right]\right)^{\frac{1}{2p}}.
\]
Taking into account the uniform upper bound on the remainder term, the first part of \eqref{con_stoch_int} follows from standard entropy bounds as in Lemma 22 in \cp.
Once these upper bounds for all $p$th moments are available, concentration inequalities as stated in the second part of \eqref{con_stoch_int} then immediately follow from the fact that, for any real valued random variable $Z$, satisfying, for any $p\geq 1$ and for some function $\phi\colon(0,\infty)\to(0,\infty)$, $\left(\E\left[|Z|^p\right]\right)\p\leq \phi(p)$, one has
\[\P\left(|Z|\geq \e \phi(u)\right)\ \leq\ \exp(-u),\quad u\geq 1.\]

\end{appendix}

\bibliography{exp_bib}   

\end{document}